\documentclass[9.5pt, a4paper]{article}
\usepackage{amsmath}
\usepackage{fancyhdr}
\usepackage[misc]{ifsym}
\usepackage{color}
\usepackage{amsmath}
\usepackage{amsfonts}
\usepackage{dsfont}
\usepackage{mathrsfs}
\usepackage{mathbbol}
\usepackage{bbm}
\usepackage{latexsym}
\usepackage{graphicx}
\usepackage{epstopdf}
\usepackage{amssymb}
\usepackage{indentfirst}
\usepackage{subfigure}
\usepackage{booktabs}
\usepackage{authblk}
\usepackage{cite}
\usepackage{float}
\usepackage[numbers,sort&compress]{natbib}
\usepackage{caption}
\usepackage{booktabs}
\usepackage{threeparttable}
\usepackage{multicol}
\usepackage{multirow}
\usepackage{geometry}
\usepackage{lineno}
\usepackage{amsbsy}
\usepackage{bm}
\usepackage{dcolumn}
\usepackage{mathrsfs}

\setlength{\parindent}{2em}

\marginparwidth 0pt
\oddsidemargin 0pt
\evensidemargin 0pt
\topmargin -0.5 cm
\textheight 23.2 truecm
\textwidth 16.0 truecm
\parskip 8pt

\newtheorem{remark}{Remark}[section]
\newtheorem{theorem}{Theorem}[section]
\newtheorem{lemma}[theorem]{Lemma}
\newtheorem{corollary}[theorem]{Corollary}

\newtheorem{example}[theorem]{Example}

\newtheorem{assumption}{Assumption}
\numberwithin{equation}{section}

\def\qed{\hfill$\Box$\vspace{8pt}}

\begin{document}
\title{More than one Author with different Affiliations}
\author[1]{Qigang Liang}
\author[1,2]{Xuejun Xu}

\affil[1]{\small School of Mathematical Science, Tongji University, Shanghai 200092, China, qigang$\_$liang@tongji.edu.cn}
\affil[2]{\small Institute of Computational Mathematics, AMSS, Chinese Academy of Sciences, Beijing 100190, China, xxj@lsec.cc.ac.cn}
\title{A Two-Level Preconditioned Helmholtz-Jacobi-Davidson Method for the Maxwell Eigenvalue Problem}\date{}
\maketitle

{\bf{Abstract}:}\ \ In this paper, based on a domain decomposition (DD) method, we shall propose an efficient two-level preconditioned Helmholtz-Jacobi-Davidson (PHJD) method for solving the algebraic eigenvalue problem resulting from the edge element approximation of the Maxwell eigenvalue problem. In order to eliminate the components in orthogonal complement space of the eigenvalue, we shall solve a parallel preconditioned system and a Helmholtz projection system together in fine space. After one coarse space correction in each iteration and minimizing the Rayleigh quotient in a small dimensional Davidson space, we finally get the error reduction of this two-level PHJD method as $\gamma=c(H)(1-C\frac{\delta^{2}}{H^{2}})$, where $C$ is a constant independent of the mesh size $h$ and the diameter of subdomains $H$, $\delta$ is the overlapping size among the subdomains, and $c(H)$ decreasing as $H\to 0$, which means the greater the number of subdomains, the better the convergence rate. Numerical results supporting our theory shall be given.\\

{\bf{Keywords}:}\ \ Maxwell eigenvalue problem, edge element, Helmholtz projection, Jacobi-Davidson method, domain decomposition. \hspace*{2pt}

\section{Introduction}
In this paper, we develop an efficient numerical algorithm for solving the Maxwell eigenvalue problem which plays an important role in computational electromagnetism (see, e.g., \cite{Hiptmair1,Boffi,Liu,Jiang1,Jiang2,CA}). The governing equations are
\begin{equation}
    \begin{cases}\label{MaxwellEquationsPri}
     \bm{curl}\bm{E}=-j\omega\mu\bm{H}  &\text{$ x \in \Omega $},\\
     div(\epsilon \bm{E})=0    &\text {$ x \in \Omega $},\\
     \bm{curl}\bm{H}=j\omega\epsilon \bm{E}   &\text {$ x \in \Omega $},\\
     div(\mu\bm{H})=0         &\text {$ x \in \Omega $},\\
     \bm{n}\times \bm{E}=\bm{0}    &\text {$ x \in \partial \Omega $},\\
    (\mu\bm{H})\cdot\bm{n}=0  &\text {$ x \in \partial \Omega $},\\
    \end{cases}
\end{equation}
and we eliminate the magnetic field $\bm{H}$, then obtain the equivalent eigenvalue problem for the electric field $\bm{E}$ as follows:
\begin{equation}
     \begin{cases}\label{MaxwellEigenvalue}
          \bm{curl}\mu^{-1}\bm{curl}\bm{E}=\omega^{2}\epsilon\bm{E}\ \ \ &\text{$x\in \Omega$},\\
          div(\epsilon\bm{E})=0\ \ \  &\text{$x\in \Omega$},\\
          \bm{n}\times \bm{E}=\bm{0}\ \ \ &\text{$x\in \partial\Omega$},
     \end{cases}
\end{equation}
where the resonant cavity $\Omega\subset R^{3}$ is  a bounded convex polyhedral domain, the coefficients $\epsilon\ $and $\mu\ $ are the real electric permittivity and magnetic permeability, respectively, $j$ is an imaginary unit, and the notation $\omega$ is the resonant angular frequency of the electromagnetic wave for cavity $\Omega$.\ For convenience, we denote that
\[\bm{u}:=\bm{E},\ \ \ \ \ \lambda:=\epsilon_{0}\mu_{0}\omega^{2},\]
where $\epsilon_{0}$ and $\mu_{0}$ are electric permittivity and magnetic permeability in vacuum, respectively.
So equations $\eqref{MaxwellEigenvalue}$ may be rewritten as:
\begin{equation}
     \begin{cases}\label{MaxwellEigenvalueu}
          \bm{curl}\mu_{r}^{-1}\bm{curl}\bm{u}=\lambda \epsilon_{r}\bm{u}\ \ \ &\text{$x\in \Omega$},\\
          div(\epsilon_{r}\bm{u})=0\ \ \  &\text{$x\in \Omega$},\\
          \bm{n}\times \bm{u}=\bm{0}\ \ \ &\text{$x\in \partial\Omega$},
     \end{cases}
\end{equation}
where the coefficients $\epsilon_{r}\ $and $\mu_{r}\ $ are the real relative electric permittivity and magnetic permeability, respectively. For simplicity, we consider that the media is isotropic and homogeneous, i.e., the coefficients $\epsilon_{r}(\geq 1)$ and $\mu_{r}(\geq 1)$ both are positive constants.
\par Edge elements were introduced by N$\acute{e}$d$\acute{e}$lec (see, e.g., \cite{ND1,ND2}). Actually, the lowest-order edge elements are often referred to as the Whitney elements (see, e.g., \cite{Hiptmair1,Boffi} and therein references). As edge elements may eliminate the nonphysical spurious nonzero eigenvalues, they have been widely used and studied to solve the Maxwell eigenvalue problem. It is known that the challenging task for solving the Maxwell eigenvalue problem is how to deal with the infinite dimensional kernel of the $\bm{curl}$ operator. Owing to this difficulty, one of approaches is chosen to so-called the penalty method, which imposes divergence-free condition by introducing the penalty term (see \cite{Buffa}), but it usually results in spurious eigenvalues. Alternative approach is the mixed variational method, which is introduced by Kikuchi (see \cite{Kikuchi}) and Boffi (see, e.g., \cite{Boffi1,Boffi}). The method may be used to handle the kernel of the $\bm{curl}$ operator but simultaneously brings larger scale size computational problem and difficult saddle-problem. The standard method drops the divergence-free condition, though it may induce spurious zero frequency, doing so does not contaminate the nonzero eigenvalues (see, e.g., \cite{Hiptmair1,Boffi}). Two grid methods have been widely used to solve elliptic eigenvalue problems (see, e.g., \cite{Aihui,Xiaoliang,Yang1}). Its application to the Maxwell eigenvalue problem has been considered in \cite{chenlong}. The idea of two grid methods is that one may first compute an eigenvalue problem on the coarse grid and then compute a boundary value problem on the fine grid. Rayleigh quotient may be used to accelerate the approximation of the eigenvalue. Under the assumptions $h=O(H^{i}) (i=2,3,4$, see, \cite{Aihui}, \cite{chenlong}, \cite{Yang1}), asymptotically optimal error estimates may be obtained, respectively. Based on the de Rham complex, Hiptmair and Neymeyr (see \cite{Hiptmair2}) proposed a projected preconditioned inverse iterative method. It can handle the kernel of the $\bm{curl}$ operator by the Helmholtz projection. A multilevel preconditioned technique is chosen to accelerate the inverse iteration procedure. This idea is also extended to the adaptive discretization for the Maxwell eigenvalue problem (see \cite{Yifeng}).
\par  The two-level preconditioned Jacobi-Davidson methods for eigenvalue problems were proposed by Zhao, Hwang and Cai (see \cite{Cai}) and further developed and analyzed by Wang and Xu (see \cite{WX1,WX2}) for $2m$-order $(m=1, 2)$ elliptic eigenvalue problems. The convergence rate of the first eigenvalue of the elliptic eigenvalue problems is bounded by $c(H)(1-C\frac{\delta^{2m-1}}{H^{2m-1}})^{2}$ $(m=1,2)$. The first eigenvalue of the partial differential operator is usually so-called principal eigenvalue(see, e.g., \cite{Evans}). In this paper, based a domain decomposition method, we shall propose an efficient and highly parallel two-level preconditioned Helmholtz-Jacobi-Davidson method (PHJD) for solving the Maxwell eigenvalue problem. We shall prove that the convergence rate of the principal eigenvalue is bounded by $c(H)(1-C\frac{\delta^{2}}{H^{2}})^{2}$, i.e.,
\begin{equation}\label{introeigenvaluerate}
\lambda^{k+1}-\lambda_{1}^{h}\leq c(H)(1-C\frac{\delta^{2}}{H^{2}})^{2}(\lambda^{k}-\lambda_{1}^{h}),
\end{equation}
where $C$ is a constant independent of the mesh size $h$ and the diameter of subdomains $H$, $\delta$ is the overlapping size among the subdomains, $\lambda_{1}^{h}$ is the first discretized eigenvalue for the Maxwell eigenvalue problem and $\lambda^{k}$ is the $k$-th iteration of the two-level PHJD method. In addition, our algorithm works very well when $h<<H^{4}$ in contrast to the two grid method which needs $h=O(H^{3})$ for the Maxwell eigenvalue problem(see \cite{chenlong}). Meanwhile, our PHJD method holds good scalabilities, which shall be verified by our numerical experiments.
\par We must emphasize that the application of the two-level domain decomposition preconditioned algorithm to the Maxwell eigenvalue problem is nontrivial. The first difficulty is that the condition
\begin{equation}\label{introcondi1}
|\lambda_{1}^{h}-\lambda_{1}|+||u-u_{h}||_{0}+h||u-u_{h}||_{a}=O(h^{2})
\end{equation}
holds for $2m$-order($m=1,2$) elliptic problems (see \cite{WX2}), but it is not true for the first-type edge elements because the polynomial space is incomplete; The second difficulty is that the kernel of the $\bm{curl}$ operator in $\bm{H}_{0}(\bm{curl};\Omega)$, which is $\nabla{H_{0}^{1}(\Omega)}$ in trivial topology domain, is very large while the kernel of the gradient operator in $H_{0}^{1}(\Omega)$ is zero space. Hence, if we take advantage of Jacobi-Davidson method to solve the algebraic system resulting from the edge element approximation of the Maxwell eigenvalue problem directly, we shall fail to work due to the fact that the iterative solution may plunge into the kernel of the $\bm{curl}$ operator; The third difficulty is that the discrete divergence-free space is non-nested for $\bm{H}_{0}(\bm{curl};\Omega)\cap \bm{H}(div_{0};\Omega;\epsilon_{r})$ (defined in the following, $\eqref{curldefinition},\eqref{div0definition}$), which means that the discrete edge element space is essentially nonconforming for the space $\bm{H}_{0}(\bm{curl};\Omega)\cap \bm{H}(div_{0};\Omega;\epsilon_{r})$. In this paper, we shall overcome these difficulties, and try to prove the convergence result of the principal eigenvalue is almost near optimal, i.e., $\eqref{introeigenvaluerate}$ is true.

\par The outline of this paper is organized as follows: Some preliminaries are introduced in Section 2. In Section 3, our two-level  preconditioned Helmholtz-Jacobi-Davidson method for the Maxwell eigenvalue problem is proposed. Some useful lemmas and the main convergence analysis are given in Section 4. Finally we present our numerical results in Section 5.

\section{The model problem and preliminary}

Let $\Omega\subset R^d,\ d=2,3$ be a bounded convex polygonal or polyhedral domain. We use the standard notations for the Sobolev spaces and denote
\begin{equation}\label{curldefinition}
\bm{H}_{0}(\bm{curl};\Omega):=
\begin{cases}
\{\bm{u}\in L^{2}(\Omega)^{2}\ |\ curl\bm{u}\in L^{2}(\Omega),\bm{u}\cdot\bm{t} |_{\partial{\Omega}}=0\ \}  & (if\  d=2),\\
\{\bm{u}\in L^{2}(\Omega)^{3}\ |\ \bm{curl}\bm{u}\in L^{2}(\Omega)^{3},\bm{u}\times \bm{n}|_{\partial{\Omega}}=\bm{0}\ \}  & (if\  d=3),\\
\end{cases}
\end{equation}
equipped with the norm $||\bm{u}||_{\bm{curl}}:=\{||\bm{u}||^{2}+||\bm{curl}\bm{u}||^{2}\}^{\frac{1}{2}}$, where $curl\bm{u}\ (d=2)$ denotes a
scale variable $\frac{\partial{u_{2}}}{\partial{x_{1}}}-\frac{\partial{u_{1}}}{\partial{x_{2}}}$($\frac{\partial}{\partial{x_{i}}}$ means a weak derivative),
$\bm{curl}\bm{u}\ (d=3)$ denotes a vector $(\frac{\partial{u_{3}}}{\partial{x_{2}}}-\frac{\partial{u_{2}}}{\partial{x_{3}}},
\frac{\partial{u_{1}}}{\partial{x_{3}}}-\frac{\partial{u_{3}}}{\partial{x_{1}}},
\frac{\partial{u_{2}}}{\partial{x_{1}}}-\frac{\partial{u_{1}}}{\partial{x_{2}}})$,
$\bm{n}$ denotes the outer unit normal vector, $\bm{t}$ denotes the unit tangential vector along the boundary $\partial{\Omega}$, and $||\cdot||$ symbols usual $L^{2}$-norm induced by $L^{2}$-inner product $(\cdot,\cdot)$. We also denote that
\begin{equation}\notag
\bm{H}(div;\Omega;\epsilon_{r}):=\{\bm{u}\in L^{2}(\Omega)^{d}\ |\ div(\epsilon_{r}\bm{u})\in L^{2}(\Omega)\  \}\ \ \ \text{$(d=2,3)$}
\end{equation}
and
\begin{equation}\label{div0definition}
\bm{H}(div_{0};\Omega;\epsilon_{r}):=\{\bm{u}\in \bm{H}(div;\Omega;\epsilon_{r}) \ |\ div(\epsilon_{r}\bm{u})=0 \  \}\ \ \ \text{$(d=2,3)$},
\end{equation}
equipped with the norm $||\bm{u}||_{div}:=\{||\bm{u}||^{2}+||div(\epsilon_{r}\bm{u})||^{2}\}^{\frac{1}{2}}$, where $div\bm{u}$ denotes $\sum_{i=1}^{d}\frac{\partial{u_{i}}}{\partial{x_{i}}}$.
For the convenience of symbols, we only consider 3-dimensional Maxwell eigenvalue problem in the following, but our algorithm also works very well and theoretical results also hold for 2-dimensional Maxwell eigenvalue problem.

\subsection{Maxwell eigenvalue problem}
\par We focus on the governing equations
$\eqref{MaxwellEigenvalueu}$. It is known that the equivalent variational form is as follows(see, e.g., \cite{Boffi}):
\begin{equation}\label{continuosvariation}
             \begin{cases}
             \text{Find $(\lambda,\bm{u})\in R\times \bm{H}_{0}(\bm{curl};\Omega)$, such that $\lambda>0,||\bm{u}||_{b}=1$},\\
             a(\bm{u},\bm{v})=\lambda b(\bm{u},\bm{v})\ \ \ \ \forall\ \bm{v}\in \bm{H}_{0}(\bm{curl};\Omega),
             \end{cases}
\end{equation}
where $a(\bm{u},\bm{v}):=(\mu_{r}^{-1}\bm{curl}\bm{u},\bm{curl}\bm{v}),\ b(\bm{u},\bm{v}):=(\epsilon_{r}\bm{u},\bm{v})$. Define the norm $||\cdot||_{b}:=\sqrt{b(\cdot,\cdot)}$ in $\bm{H}_{0}(\bm{curl};\Omega)$. Due to the fact that the Poincar$\acute{e}$ inequality holds in $\bm{H}_{0}(\bm{curl};\Omega)\cap \bm{H}(div_{0};\Omega;\epsilon_{r})$, we may define the norm
$||\cdot||_{a}:=\sqrt{a(\cdot,\cdot)}$ in $\bm{H}_{0}(\bm{curl};\Omega)\cap \bm{H}(div_{0};\Omega;\epsilon_{r})$. For any $\bm{f}\in L^{2}(\Omega)^{3}$, define $T: L^{2}(\Omega)^{3}\to \bm{H}_{0}(\bm{curl};\Omega)\cap \bm{H}(div_{0};\Omega;\epsilon_{r})$, such that
\begin{equation}\label{div0}
a(T\bm{f},\bm{v})=b(\bm{f},\bm{v})\ \ \ \forall\ \bm{v}\in \bm{H}_{0}(\bm{curl};\Omega)\cap \bm{H}(div_{0};\Omega;\epsilon_{r}).
\end{equation}
Combining with the Poincar$\acute{e}$ inequality, we know that the definition of the operator $T$ is meaningful. It is obvious that $T$ is symmetric in the sense of $b(\cdot,\cdot)$ because of the definitions of $a(\cdot,\cdot)$ and $b(\cdot,\cdot)$. Due to the fact that $\bm{H}_{0}(\bm{curl};\Omega)\cap \bm{H}(div_{0};\Omega;\epsilon_{r})$ is embedded compactly in $L^{2}(\Omega)^{3}$ (see, e.g., \cite{Amr,Hiptmair1}), we know that the operator $T:L^{2}(\Omega)^{3}\to L^{2}(\Omega)^{3}$ is a compact operator. Furthermore, the operator $T:(L^{2}(\Omega)^{3},b(\cdot,\cdot))\to (L^{2}(\Omega)^{3},b(\cdot,\cdot))$ is also a compact operator. Hence, owing to the well-known Riesz-Schauder theorem, it is known that
\begin{equation}\notag
T\bm{u}_{i}=\frac{1}{\lambda_{i}}\bm{u}_{i}.
\end{equation}
Meanwhile,
\begin{equation}\notag
0<\lambda_{1}\leq \lambda_{2}\leq \lambda_{3}\leq...\leq\lambda_{n}\rightarrow +\infty\ \ \ \ n\to +\infty,
\end{equation}
and corresponding eigenvectors are
\begin{equation}\notag
\bm{u}_{1},\bm{u}_{2},\bm{u}_{3},...,
\end{equation}
which satisfy
\begin{equation}\notag
a(\bm{u}_{i},\bm{u}_{j})=\lambda_{i}b(\bm{u}_{i},\bm{u}_{j})=\delta_{ij}\lambda_{i},
\end{equation}
where $\delta_{ij}$ is the Kronecker notation. Obviously the eigen-pair in $\eqref{continuosvariation}$ and the eigen-pair of the operator $T$ are all  corresponding(see \cite{Boffi1,chenlong}).

\subsection{ Finite element discretization}
\par  We use the standard edge elements to discretize $\bm{H}_{0}(\bm{curl};\Omega)$ and our ultimate conclusions are true for all other edge elements(see \cite{ND1,ND2}). For simplicity, we only consider the lowest-order edge element. The local polynomial space is
\begin{equation}\label{ND}
ND(K)=\{\bm{v}\ |\ \bm{v}=\bm{a}+\bm{b}\times \bm{x},\ \ \ \bm{a},\bm{b}\in \bm{R}^{3}\ \bm{x}\in K\  \},
\end{equation}
and the moments are
\begin{equation}\notag
M(\bm{v},p_{0},e_{ij})=\int_{e_{ij}}\bm{v}\cdot\bm{t}_{ij}p_{0}ds\ \ \forall\  p_{0}\in P_{0}(K), \ \ 1\leq i<j\leq 4,
\end{equation}
where $\bm{t}_{ij}$ is the unit tangential vector along edge $e_{ij}$, and local basic vector fields are
\begin{equation}\notag
\bm{\varphi}^{K}_{ij}=\lambda_{i}^{K}\nabla{\lambda_{j}^{K}}-\lambda_{j}^{K}\nabla{\lambda_{i}^{K}},\ \ 1\leq i<j\leq 4,
\end{equation}
where the $\lambda_{i}^{K}$ is the barycentric coordinate corresponding to the $i$-th node on the tetrahedron $K$.
So our edge element space is
\begin{equation}\notag
E_{h}(\Omega_{h}):=\{\bm{v}\in \bm{H}_{0}(\bm{curl};\Omega)\ |\ \bm{v}|_{K}\in ND(K),\ \forall\ K \in \tau_{h}\ \}.
\end{equation}
Then the discrete standard variational form of $\eqref{continuosvariation}$ may be written as follows:
\begin{equation}
             \begin{cases}\label{discreteH0curl}
             \text{Find $(\lambda^{h},\bm{u}^{h})\in R\times E_{h}(\Omega_{h})$, such that $\lambda^{h}>0,||\bm{u}^{h}||_{b}=1$}\\
             a(\bm{u}^{h},\bm{v})=\lambda^{h} b(\bm{u}^{h},\bm{v})\ \ \ \ \forall\ \bm{v}\in E_{h}(\Omega_{h}).
             \end{cases}
\end{equation}
Define the discrete divergence-free space to be  $E_{h}^{0}(\Omega_{h};\epsilon_{r}):=\{\bm{v}\in E_{h}(\Omega_{h})\ |\ b(\bm{v},\nabla{p}_{h})=0,\ \ \forall\ p_{h}\in S_{h}(\Omega_{h})\ \}$, with the $S_{h}(\Omega_{h})$ denoting a continuous piecewise linear polynomial space in $\Omega$ with vanishing trace on $\partial{\Omega}$ and define the discrete operator $T_{h}:L^{2}(\Omega)^{3}\to E_{h}^{0}(\Omega_{h};\epsilon_{r})$ as follows: $\forall \bm{f}\in L^{2}(\Omega)^{3}$,
\begin{equation}\label{discretediv0}
a(T_{h}\bm{f},\bm{v})=b(\bm{f},\bm{v})\ \ \ \ \ \forall\ \bm{v}\in E_{h}^{0}(\Omega_{h};\epsilon_{r}).
\end{equation}
Due to the discrete Poincar$\acute{e}$ inequality (see, e.g., \cite{Hiptmair1}), we know that the definition of $T_{h}$ is meaningful and may define the norm $||\cdot||_{a}:=\sqrt{a(\cdot,\cdot)}$ in $E_{h}^{0}(\Omega_{h};\epsilon_{r})$. It is easy to see that $T_{h}$ is symmetric and compact. 
Because of the well-known Riesz-Schauder theorem, we have
\begin{equation}\notag
T_{h}\bm{u}_{i}^{h}=\frac{1}{\lambda_{i}^{h}}\bm{u}^{h}_{i}.
\end{equation}
Meanwhile,
\begin{equation}\notag
0<\lambda_{1}^{h}\leq \lambda_{2}^{h}\leq \lambda_{3}^{h}\leq...\leq \lambda_{nd}^{h},
\end{equation}
and the corresponding eigenvectors are
\begin{equation}\notag
\bm{u}_{1}^{h},\bm{u}_{2}^{h},\bm{u}_{3}^{h},...,\bm{u}_{nd}^{h},
\end{equation}
which satisfy
\begin{equation}\notag
a(\bm{u}^{h}_{i},\bm{u}^{h}_{j})=\lambda_{i}^{h}b(\bm{u}^{h}_{i},\bm{u}^{h}_{j})=\delta_{ij}\lambda^{h}_{i},
\end{equation}
where $nd=dim( E_{h}^{0}(\Omega_{h};\epsilon_{r}))$. Similar to the continuous case, the eigen-pair in $\eqref{discreteH0curl}$ and the eigen-pair of the operator $T_{h}$ are all corresponding. For the convenience of the following error estimate, we define the operator
$A^{h}:E_{h}(\Omega_{h})\to E_{h}(\Omega_{h})$ such that
\begin{equation}\notag
b(A^{h}\bm{v}^{h},\bm{w}^{h})=a(\bm{v}^{h},\bm{w}^{h})\ \ \ \ \forall\  \bm{w}^{h}\in E_{h}(\Omega_{h}),
\end{equation}
which is the discrete analog of the operator $ \bm{curl}\mu_{r}^{-1}\bm{curl} $ in $E_{h}(\Omega_{h})$.
\begin{remark}
In this paper, we are interested in the case that the principal eigenvalue is simple, i.e.,
\begin{equation}\notag
0<\lambda_{1}<\lambda_{2}\leq \lambda_{3}\leq...\leq \lambda_{n}\to +\infty, \ \ \ \ n \to +\infty,
\end{equation}
and for the corresponding discrete version, we also have
\begin{equation}\notag
0<\lambda_{1}^{h}<\lambda_{2}^{h}\leq \lambda_{3}^{h}\leq...\leq \lambda_{nd}^{h}.
\end{equation}
The principal eigenvalue with simple algebraic multiplicity is common. For example, the resonant domain $\Omega$ is a cuboid in $R^{3}$, where the length of each edge is different. Specifically, when the resonant cavity $\Omega=(0,L)\times(0,K)\times(0,R)$ is equipped with vacuum in $\eqref{MaxwellEigenvalueu}$, we may know that $\lambda=(\frac{\pi}{L})^{2}m^{2}+(\frac{\pi}{K})^{2}n^{2}+(\frac{\pi}{R})^{2}l^{2}$, where $m, n$ and $ l$ are nonnegative integers and at most one of elements in $\{m, n, l\}$ takes zero(see \cite{CA}).
\end{remark}

\par It is known that we have the following spacial decomposition properties(see \cite{Boffi})
\begin{equation}\label{continuousspectralspaces}
\bm{H}_{0}(\bm{curl};\Omega)=\nabla{H_{0}^{1}(\Omega)}\oplus M(\lambda_{1})\oplus M^{\perp}(\lambda_{1}),
\end{equation}
\begin{equation}\label{discretespectralspaces}
E_{h}(\Omega_{h})=\nabla{S_{h}(\Omega_{h})}\oplus M_{h}(\lambda_{1})\oplus M_{h}^{\perp}(\lambda_{1}),
\end{equation}
where $M(\lambda_{1})=span\{\bm{u}_{1}\},\ M_{h}(\lambda_{1})=span\{\bm{u}^{h}_{1}\}$ and the notation $\oplus$ denotes the $b(\cdot,\cdot)$-orthogonal(also $a(\cdot,\cdot)$-orthogonal) direct sum. For the convenience of the following symbols, we denote that $K_{0}^{h}:E_{h}(\Omega_{h})\to \nabla{S_{h}(\Omega_{h})},\ Q_{1}^{h}:E_{h}(\Omega_{h})\to M_{h}(\lambda_{1}),\ Q_{2}^{h}:E_{h}(\Omega_{h})\to M_{h}^{\perp}(\lambda_{1})$ are the $b(\cdot,\cdot)$-orthogonal projections. Similarly, we may define the operator $K_{0}^{H},Q_{1}^{H},Q_{2}^{H}$ on the coarse level and let
$Z^{(0)}:=I-K_{0}^{H}$.
\par It is obvious that $E_{h}^{0}(\Omega_{h};\epsilon_{r})\not\subset M(\lambda_{1})\oplus M^{\perp}(\lambda_{1})$. Denote $H_{\epsilon_{r}}: \bm{H}_{0}(\bm{curl};\Omega)\to M(\lambda_{1})\oplus M^{\perp}(\lambda_{1})$ to be $b(\cdot,\cdot)$-orthogonal projection, which is usually so-called Hodge operator, and let $V^{+}:=H_{\epsilon_{r}}E_{h}^{0}(\Omega_{h};\epsilon_{r})$. It is easy to see that the elements in $V^{+}$ are not finite element functions, but $\bm{curl}V^{+}\subset RT_{0}(\Omega_{h})$, where $RT_{0}(\Omega_{h})$ is a well-known Raviart-Thomas element space with vanishing normal trace along the boundary $\partial{\Omega}$. We define an operator $P^{h}:M(\lambda_{1})\oplus M^{\perp}(\lambda_{1})\to V^{+}$ as follows:
\begin{equation}\label{definitionofPh}
a(\bm{u}-P^{h}\bm{u},\bm{v})=0 \ \ \ \ \forall\ \bm{v}\in V^{+}.
\end{equation}
Furthermore, we may also define $P^{h}:\bm{H}_{0}(\bm{curl};\Omega)\to V^{+}$ by a trivial extension.
\par According to the definition of $P^{h}$, we may obtain the following lemma (see \cite{Toselli}).
\begin{lemma}\label{lemmaPh}
Let $\Omega$ be a convex bounded polyhedral domain, we have
\begin{equation}
||\bm{u}^{h}-P^{h}\bm{u}^{h}||_{b}\leq Ch||\bm{curl}\bm{u}^{h}||_{b}\ \ \ \text{$\forall\ \bm{u}^{h}\in E^{0}_{h}(\Omega_{h};\epsilon_{r})$}.
\end{equation}
\end{lemma}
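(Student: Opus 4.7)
The plan is to first identify $P^{h}\bm{u}^{h}$ explicitly as the Hodge-type projection of $\bm{u}^{h}$, and then reduce the lemma to estimating the ``gradient part'' of $\bm{u}^{h}$ in its continuous Helmholtz decomposition. Apply the continuous Helmholtz splitting to $\bm{u}^{h}\in \bm{H}_{0}(\bm{curl};\Omega)$ and write $\bm{u}^{h}=\nabla\varphi+\bm{w}$ with $\varphi\in H_{0}^{1}(\Omega)$ and $\bm{w}=H_{\epsilon_{r}}\bm{u}^{h}\in V^{+}$. Since $\bm{curl}(\nabla\varphi)=\bm{0}$, one has $a(\bm{u}^{h}-\bm{w},\bm{v})=0$ for every $\bm{v}\in V^{+}$, and because $a(\cdot,\cdot)$ is a norm on $V^{+}\subset \bm{H}_{0}(\bm{curl};\Omega)\cap \bm{H}(div_{0};\Omega;\epsilon_{r})$ by the Poincar\'e inequality, the defining relation $\eqref{definitionofPh}$ forces $P^{h}\bm{u}^{h}=\bm{w}$. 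Hence $\bm{u}^{h}-P^{h}\bm{u}^{h}=\nabla\varphi$, and it suffices to prove $||\nabla\varphi||_{b}\leq Ch\,||\bm{curl}\bm{u}^{h}||_{b}$.

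The second ingredient is regularity plus a N\'ed\'elec interpolation step. On a convex polyhedral domain the standard embedding $\bm{H}_{0}(\bm{curl};\Omega)\cap \bm{H}(div_{0};\Omega;\epsilon_{r})\hookrightarrow H^{1}(\Omega)^{3}$ yields $\bm{w}\in H^{1}(\Omega)^{3}$ with $||\bm{w}||_{1}\leq C\,||\bm{curl}\bm{w}||=C\,||\bm{curl}\bm{u}^{h}||$. Using the (possibly smoothed) canonical N\'ed\'elec interpolant $\Pi_{h}^{ND}\bm{w}\in E_{h}(\Omega_{h})$, I would invoke the commuting diagram of the discrete de Rham complex to conclude $\bm{curl}\,\Pi_{h}^{ND}\bm{w}=\Pi_{h}^{RT}\bm{curl}\bm{w}=\bm{curl}\bm{u}^{h}$, the last identity because $\bm{curl}\bm{u}^{h}$ already lies in $RT_{0}(\Omega_{h})$. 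Then $\bm{u}^{h}-\Pi_{h}^{ND}\bm{w}\in E_{h}(\Omega_{h})$ is curl-free, so by the exactness of the discrete sequence $\bm{u}^{h}-\Pi_{h}^{ND}\bm{w}=\nabla q_{h}$ for some $q_{h}\in S_{h}(\Omega_{h})$, and therefore $\nabla(\varphi-q_{h})=-(\bm{w}-\Pi_{h}^{ND}\bm{w})$.

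To finish, I would close with a Galerkin-orthogonality computation. Since $\bm{u}^{h}\in E_{h}^{0}(\Omega_{h};\epsilon_{r})$ and $\bm{w}\in \bm{H}(div_{0};\Omega;\epsilon_{r})$ is $b(\cdot,\cdot)$-orthogonal to every gradient in $\nabla H_{0}^{1}(\Omega)$, we get $b(\nabla\varphi,\nabla p_{h})=b(\bm{u}^{h},\nabla p_{h})-b(\bm{w},\nabla p_{h})=0$ for every $p_{h}\in S_{h}(\Omega_{h})$, and in particular for $p_{h}=q_{h}$. Hence
\[ ||\nabla\varphi||_{b}^{2}=b(\nabla\varphi,\nabla(\varphi-q_{h}))=-b(\nabla\varphi,\bm{w}-\Pi_{h}^{ND}\bm{w})\leq ||\nabla\varphi||_{b}\cdot ||\bm{w}-\Pi_{h}^{ND}\bm{w}||_{b}, \]
and combining with the standard estimate $||\bm{w}-\Pi_{h}^{ND}\bm{w}||_{b}\leq Ch\,||\bm{w}||_{1}$ and the regularity bound on $\bm{w}$ yields the claim.

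The main obstacle will be justifying the N\'ed\'elec interpolation step: the classical interpolant defined via edgewise moments strictly requires slightly more than $H^{1}$ regularity, so in principle one must use either the Sch\"oberl smoothing or a Falk--Winther-type quasi-interpolant in order to retain both the commuting-diagram property and the $O(h)$ error bound under the mere $H^{1}$ regularity of $\bm{w}$ that the convex-domain embedding supplies. Appealing to \cite{Toselli} (or \cite{Hiptmair1}) for this construction sidesteps the difficulty.
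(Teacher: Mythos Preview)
The paper does not supply its own proof of this lemma; it states the result and cites \cite{Toselli}. Your argument is precisely the standard one found in that reference and in Hiptmair's work: identify $P^{h}\bm{u}^{h}$ with the continuous Hodge projection $H_{\epsilon_{r}}\bm{u}^{h}$, so that $\bm{u}^{h}-P^{h}\bm{u}^{h}=\nabla\varphi$; exploit the convex-domain embedding to get $\bm{w}=H_{\epsilon_{r}}\bm{u}^{h}\in H^{1}(\Omega)^{3}$; and close with the Galerkin-orthogonality step against the discrete gradient $\nabla q_{h}$ coming from the commuting-diagram identity $\bm{curl}\,\Pi_{h}^{ND}\bm{w}=\bm{curl}\,\bm{u}^{h}$.

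One small sharpening of your caveat: the obstacle you flag about the classical edge interpolant is less severe here than in the general setting, because $\bm{curl}\,\bm{w}=\bm{curl}\,\bm{u}^{h}$ is a piecewise constant in $RT_{0}(\Omega_{h})$ and hence lies in $L^{p}$ for every finite $p$. Under the hypothesis $\bm{w}\in H^{1}(K)^{3}$ together with $\bm{curl}\,\bm{w}\in L^{p}(K)^{3}$ for some $p>2$, the classical N\'ed\'elec interpolant is already well-defined with the required $O(h)$ bound, so one does not strictly need the Sch\"oberl or Falk--Winther quasi-interpolants, though citing them is certainly a clean way to sidestep the issue.
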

\par The following a prior error estimates (see Theorem 5.4 in \cite{Boffi1}, Subsection 4.3 and Subsection 4.4 in \cite{Hiptmair1}) are useful in our convergence analysis. The proof of the estimates essentially needs strengthened discrete compactness properties and standard approximation properties. For interested readers, please refer to  Theorem 19.6 in \cite{Boffi} or \cite{Osb} for details.
\begin{theorem}\label{prioreTheorem}
Let $\Omega$ be a bounded convex polyhedral domain. There exists $h_{0}>0$ such that for any $h$ $(0<h<h_{0})$ and for any $\lambda_{i}$ $(1<i<+\infty)$, we have
\begin{equation}\notag
|\lambda_{i}-\lambda_{i,j}^{h}|\leq Ch^{2}\ \ as\ \ \lambda_{i,j}^{h}\to \lambda_{i},\ \ \ \ j=1,2,...,m_{i},\ \
\end{equation}
where C is independent of $h$ but not $\lambda_{i}$ and $m_{i}$ is the algebraic multiplicity of $\lambda_{i}$. Moreover,
\begin{equation}\notag
\theta(V_{\lambda_{i}},M_{h}(\lambda_{i}))\leq Ch,
\end{equation}
where $M_{h}(\lambda_{i})=\oplus_{j=1}^{m_{i}}V_{\lambda^{h}_{i,j}}$, $dim(V_{\lambda_{i}})=m_{i}$, the notation $V_{\lambda}$ denotes the eigenvector space corresponding to the eigenvalue $\lambda$, the $\theta(M,N)$ denotes the gap between subspace $M\subset \bm{H}_{0}(\bm{curl};\Omega)$ and subspace $N \subset \bm{H}_{0}(\bm{curl};\Omega)$, i.e.,
\begin{equation}\notag
\theta(M,N)=\max\{\widetilde{\theta}(M,N),\widetilde{\theta}(N,M)\}
\end{equation}
\begin{equation}\notag
\widetilde{\theta}(M,N)=\sup_{||\bm{u}||_{b}=1,\ \bm{u}\in M}\inf_{\bm{v}\in N}||\bm{u}-\bm{v}||_{b},
\end{equation}
or
\begin{equation}\notag
\widetilde{\theta}(M,N)=\sup_{||\bm{u}||_{c}=1,\ \bm{u}\in M}\inf_{\bm{v}\in N}||\bm{u}-\bm{v}||_{c},
\end{equation}
where the norm $||\cdot||_{c}:=\sqrt{a(\cdot,\cdot)+b(\cdot,\cdot)}$ is equivalent to the norm $||\cdot||_{\bm{curl}}$ in $\bm{H}_{0}(\bm{curl};\Omega)$.
\end{theorem}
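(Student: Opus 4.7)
The plan is to cast the question in the Babu\v{s}ka--Osborn spectral approximation framework for the $b(\cdot,\cdot)$-symmetric compact solution operators $T$ and $T_h$ defined in \eqref{div0} and \eqref{discretediv0}. Since $\lambda_i$ is an eigenvalue of \eqref{continuosvariation} iff $1/\lambda_i$ is an eigenvalue of $T$, and analogously for $T_h$, the whole theorem reduces to controlling $\|(T-T_h)|_{V_{\lambda_i}}\|$ in a suitable operator norm on the finite-dimensional eigenspace $V_{\lambda_i}$. Two abstract ingredients then do the rest: for the eigenfunction gap one has a linear bound $\theta(V_{\lambda_i},M_h(\lambda_i))\le C\,\|(T-T_h)|_{V_{\lambda_i}}\|_b$; and because $T$ and $T_h$ are both $b$-symmetric, Osborn's identity promotes this to a \emph{quadratic} bound for the eigenvalue, $|\lambda_i^{-1}-(\lambda_{i,j}^h)^{-1}|\le C\,\|(T-T_h)|_{V_{\lambda_i}}\|_b^{\,2}$ modulo lower-order terms, and hence to $|\lambda_i-\lambda_{i,j}^h|\le Ch^2$ as soon as an $O(h)$ bound on the operator norm is in hand.

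Two concrete inputs feed this machine. The first is the \emph{discrete compactness} of the Nedelec divergence-free space $E_h^0(\Omega_h;\epsilon_r)$ in $L^2(\Omega)^3$, in the sense of Kikuchi and Monk; together with the uniform discrete Poincar\'e inequality this yields collective compactness of $\{T_h\}$ and rules out spurious eigenvalues, so that for $h$ small the discrete spectrum clusters correctly around each $\lambda_i$ with the right total multiplicity $m_i$. The second is an approximation estimate on the eigenspace: the convexity of $\Omega$ gives, via the classical embedding of $\bm{H}_0(\bm{curl};\Omega)\cap\bm{H}(\mathrm{div}_0;\Omega;\epsilon_r)$ into $\bm{H}^s(\Omega)^3$ for some $s>1/2$, extra regularity of both $\bm{u}_i$ and $\bm{curl}\,\bm{u}_i$; the standard Nedelec interpolation estimate then produces an $O(h)$ best-approximation error in the $a(\cdot,\cdot)$-norm, and an Aubin--Nitsche duality argument in $b(\cdot,\cdot)$ lifts the same rate to the $L^2$-type norm, delivering $\|(T-T_h)\bm{u}_i\|_b\le Ch$ on $V_{\lambda_i}$.

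The main obstacle I expect is the third difficulty highlighted in the introduction, namely $E_h^0(\Omega_h;\epsilon_r)\not\subset \bm{H}(\mathrm{div}_0;\Omega;\epsilon_r)$, which precludes a direct C\'ea argument in the natural conforming space. This is precisely where the Hodge-type projection $P^h$ of \eqref{definitionofPh}, combined with Lemma \ref{lemmaPh}, enters: any $\bm{v}_h\in E_h^0(\Omega_h;\epsilon_r)$ is only $O(h)\|\bm{curl}\,\bm{v}_h\|_b$ away in the $b$-norm from its image in $V^+\subset M(\lambda_1)\oplus M^{\perp}(\lambda_1)$, so the comparison between the conforming $T\bm{u}_i$ and the non-conforming $T_h\bm{u}_i$ can be routed through $V^+$ while picking up only a harmless $h$-consistent correction that is absorbed by the interpolation term. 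With this bridging installed, the duality argument proceeds as in Theorem~19.6 of \cite{Boffi} or the Osborn-style analyses of \cite{Boffi1,Hiptmair1}, and delivers both stated rates simultaneously.
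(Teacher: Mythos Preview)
The paper does not actually supply its own proof of this theorem: it is quoted as a known a~priori estimate, with the sentence ``The proof of the estimates essentially needs strengthened discrete compactness properties and standard approximation properties'' and pointers to Theorem~5.4 in \cite{Boffi1}, Subsections~4.3--4.4 in \cite{Hiptmair1}, Theorem~19.6 in \cite{Boffi}, and \cite{Osb}. Your proposal is aligned with exactly that literature: the Babu\v{s}ka--Osborn framework for $T,T_h$, discrete compactness to obtain uniform convergence and correct multiplicity clustering, and the $O(h)$ approximation of the smooth eigenspace via N\'ed\'elec interpolation feeding Osborn's quadratic identity for the eigenvalue error. So in substance you are reproducing the proof the paper cites rather than diverging from it.

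One point worth flagging. You invoke the operator $P^h$ of \eqref{definitionofPh} and Lemma~\ref{lemmaPh} to bridge the nonconformity $E_h^0(\Omega_h;\epsilon_r)\not\subset \bm{H}(\mathrm{div}_0;\Omega;\epsilon_r)$. That is a perfectly legitimate device, but it is \emph{not} how the cited references argue, and it is not needed for Theorem~\ref{prioreTheorem}: $P^h$ and Lemma~\ref{lemmaPh} are tools the present paper introduces for the later analysis of the PHJD iteration (stable decomposition, coarse-space bounds), not for the a~priori spectral estimate. In the Boffi/Hiptmair/Osborn treatment the nonconformity is absorbed directly by the discrete compactness property, which already delivers $\|T-T_h\|_{\mathcal{L}(L^2,L^2)}\to 0$ without passing through $V^+$; the $O(h)$ rate on $V_{\lambda_i}$ then comes from the standard curl-conforming interpolation error together with an Aubin--Nitsche argument carried out in the mixed/Hodge setting rather than via your $P^h$ detour. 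Your route would work, but it imports machinery from the wrong part of the paper; if you want to match the cited proof, drop the $P^h$ step and lean on discrete compactness plus the Fortin/Hodge interpolation estimates directly.
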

\begin{remark}\label{gap}
In general $Banach$ Spaces, the gap between subspace $M$ and subspace $N$  does not construct a distance due to the fact that it does not satisfy the triangle inequality but does satisfy
\begin{equation}\notag
\theta(M,N)\leq d(M,N)\leq 2\theta(M,N),
\end{equation}
where the $d(M,N)$ denotes the Hausdorff distance between subspace $M$ and subspace $N$. For interested readers, please refer to \cite{Kato} for details on the distinction between the gap and Hausdorff distance.
\end{remark}
\par From Theorem 2.1 and Remark $\ref{gap}$, we obtain the following corollary.
\begin{corollary}\label{priorecorollary}
Under the assumption of Theorem $\ref{prioreTheorem}$, let $(\lambda_{1}^{h},\bm{u}_{1}^{h})$ be the first eigen-pair of $\eqref{discreteH0curl}$, $||\bm{u}_{1}^{h}||_{b}=1$ $( ||\bm{u}_{1}^{h}||_{c}=1 )$. Then there exists $h_{0}>0$ such that when $0<h<h_{0}$, it holds that
\begin{equation}\label{prioreigenvalue}
|\lambda_{1}-\lambda_{1}^{h}|\leq Ch^{2},
\end{equation}
and there exists $\bm{u}_{1}\in M(\lambda_{1})$, $||\bm{u}_{1}||_{b}=1$ \ $( ||\bm{u}_{1}||_{c}=1 )$, such that
\begin{equation}\label{prioreigenector0}
||\bm{u}_{1}-\bm{u}_{1}^{h}||_{b}\leq Ch,\ \ \ \  ( ||\bm{u}_{1}-\bm{u}_{1}^{h}||_{c}\leq Ch ).
\end{equation}
\end{corollary}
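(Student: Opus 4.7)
The plan is to read off both estimates directly from Theorem 2.1 together with Remark 2.1, exploiting the fact that the principal eigenvalue is simple, so that $m_1=1$, $V_{\lambda_1}=M(\lambda_1)=\mathrm{span}\{\bm{u}_1\}$, and $M_h(\lambda_1)=\mathrm{span}\{\bm{u}_1^h\}$ are both one-dimensional. The eigenvalue bound $\eqref{prioreigenvalue}$ then follows at once: specialize $i=1$ in the first display of Theorem 2.1, where the only associated discrete eigenvalue is $\lambda_{1,1}^h=\lambda_1^h$, and pick $h_0>0$ small enough that the convergence $\lambda_1^h\to \lambda_1$ guaranteed by Theorem 2.1 is in effect for $h<h_0$.

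For the eigenvector estimate $\eqref{prioreigenector0}$, the plan is to turn the subspace gap bound into a vector bound by a standard normalization argument. From Theorem 2.1, $\theta(V_{\lambda_1},M_h(\lambda_1))\le Ch$, hence the ``$\widetilde{\theta}$-from-$M_h$-into-$V_{\lambda_1}$'' piece of the gap gives that for some unit $\bm{u}_1\in M(\lambda_1)$ (with sign chosen so that $\alpha:=b(\bm{u}_1,\bm{u}_1^h)\ge 0$) one has
\begin{equation}\notag
\|\bm{u}_1^h-\alpha\bm{u}_1\|_b=\inf_{\bm{v}\in V_{\lambda_1}}\|\bm{u}_1^h-\bm{v}\|_b\le Ch,
\end{equation}
since the $b$-orthogonal projection of $\bm{u}_1^h$ onto the one-dimensional space $V_{\lambda_1}=\mathrm{span}\{\bm{u}_1\}$ is exactly $\alpha\bm{u}_1$. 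Expanding $\|\bm{u}_1^h-\alpha\bm{u}_1\|_b^2=1-\alpha^2$ yields $1-\alpha\le \tfrac{C^2h^2}{1+\alpha}\le Ch^2$ for $h<h_0$, so $\alpha\in[1-Ch^2,1]$. The triangle inequality then gives
\begin{equation}\notag
\|\bm{u}_1-\bm{u}_1^h\|_b\le \|\bm{u}_1^h-\alpha\bm{u}_1\|_b+|1-\alpha|\,\|\bm{u}_1\|_b\le Ch+Ch^2\le Ch.
\end{equation}
Replacing $\|\cdot\|_b$ by $\|\cdot\|_c$ throughout (and normalizing $\bm{u}_1,\bm{u}_1^h$ in the $c$-norm) repeats the same argument verbatim, because both are Hilbert-space norms and Theorem 2.1 provides the gap estimate in either norm; this yields the parenthetical $c$-norm version.

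The argument is essentially a bookkeeping exercise on top of Theorem 2.1, so there is no real obstacle. The only point requiring a bit of care is the translation from the subspace gap to a vector estimate with the correct unit normalization; as shown above, choosing the sign of $\bm{u}_1$ to make the inner product $\alpha$ nonnegative and then using $1-\alpha^2=(1-\alpha)(1+\alpha)$ to absorb the factor $1+\alpha\ge 1$ reduces this to a one-line estimate. Invoking Remark 2.1 is not strictly necessary for this simple case, but it legitimizes viewing the gap as a quasi-distance, which is the conceptual ingredient that makes the reduction from $\theta$ to $\|\bm{u}_1-\bm{u}_1^h\|_b$ transparent.
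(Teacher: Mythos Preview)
Your proof is correct and is precisely the argument the paper has in mind: the paper states only that the corollary follows from Theorem~\ref{prioreTheorem} and Remark~\ref{gap}, and you have supplied the standard details---specializing $i=1$ with $m_1=1$ for the eigenvalue bound, and converting the one-dimensional gap estimate into a vector estimate via the sign-normalized projection $\alpha\bm{u}_1$ and the identity $\|\bm{u}_1^h-\alpha\bm{u}_1\|_b^2=1-\alpha^2$. There is nothing to add.
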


\section{The two-level PHJD method}
\subsection{Domain decomposition}
\par In this subsection, we shall introduce the domain decomposition and the corresponding subspaces decomposition.
\par Let the coarse quasi-uniform triangulation be $\tau_{H}:=\{\Omega_{i}\}_{i=1}^{N}$, where $H:=max\{H_{i}\ |\ i=1,2,...,N\}$ and $H_{i}:=diam(\Omega_{i})$. The fine shaped-regular and quasi-uniform triangulation is obtained by subdividing $\tau_{H}$ and we denote it by $\tau_{h}$. We may construct the edge element spaces $E_{H}(\Omega_{H})\subset E_{h}(\Omega_{h})$ on $\tau_{H}$ and $\tau_{h}$ but it is well known that $E^{0}_{H}(\Omega_{H};\epsilon_{r})\not\subset E^{0}_{h}(\Omega_{h};\epsilon_{r})$. To get the overlapping subdomains $(\Omega_{i}^{'},\ 1\leq i\leq N)$, we enlarge the subdomains $\Omega_{i}$ by adding fine elements inside $\Omega_{h}$ layer by layer such that $\partial \Omega_{i}^{'}$ does not cut through any fine element. To measure the overlapping width between neighboring subdomains, we define the $\delta_{i}:=dist(\partial\Omega_{i}\setminus\partial\Omega,\partial\Omega_{i}^{'}\setminus\partial\Omega)$ and denote $\delta:=min\{\delta_{i}\ |\ i=1,2,...,N\}$. We also assume that $H_{i}$ is the diameter of the $\Omega_{i}^{'}$.
\par The local subspaces may be defined by
\begin{equation}\label{Vi}
V^{(i)}:=\{\bm{v}_{h}\in E_{h}(\Omega_{h})\ |\ \bm{v}_{h}(\bm{x})=\bm{0}, \ \ \bm{x}\in \Omega\setminus\Omega_{i}^{'} \ \},
\end{equation}
\begin{equation}\label{Vi0}
V_{0}^{(i)}:=\{\bm{v}^{(i)}_{h}\in V^{(i)}\ |\ b(\bm{v}_{h}^{(i)},\nabla{p}_{h})=0,\ \ \ \ p_{h}\in S^{(i)}_{h} \ \},
\end{equation}
which are associated with the local fine mesh in $\Omega_{i}^{'}\ (i=1,2,...,N)$, where $S^{(i)}_{h}:=\{p_{h}\in S_{h}(\Omega_{h})\ |\ p_{h}(\bm{x})=0\ \ \bm{x}\in \Omega\setminus\Omega_{i}^{'} \}$. We denote $Z^{(i)}:V^{(i)}\to V_{0}^{(i)}$ to be the $b(\cdot,\cdot)$-orthogonal projection and let $K^{(i)}:=I-Z^{(i)}$.
\begin{assumption}\label{assumption1}
The partition $\{\Omega_{i}^{'}\}_{i=1}^{N}$ may be colored using at most $N_{0}$ colors, in such a way that subdomains with the same color are disjoint.\ $N_{0}$ is independent of the number of subdomains $N$.
\end{assumption}
\par According to the Assumption 1, we know that if $x\in \Omega$, then it belongs to at most $N_{0}$ subdomains in $\{\Omega_{i}^{'}\}_{i=1}^{N}$. Besides, we obtain a partition of  unity and then there exists a family of functions $\{\theta_{i}\}_{i=1}^{N}$, which are continuous piecewise linear polynomials, satisfy the following properties (see \cite{ToselliM}):
\begin{equation}\label{unitypartition}
supp(\theta_{i})\subset \overline{\Omega_{i}^{'}},\ \ \ \
0 \leq \theta_{i}\leq 1,\ \ \ \
\sum_{i=1}^{N}\theta_{i}(\bm{x})=1,\ \ \bm{x}\in \Omega,\ \ \ \
||\nabla{\theta_{i}}||_{0,\infty,\Omega_{i}^{'}}\leq \frac{C}{\delta}.
\end{equation}

  The strengthened Cauchy-Schwarz inequality is true over the local subspaces $V^{(i)}$, i.e., for $\bm{v}^{(i)}\in V^{(i)}$ and $\bm{v}^{(j)}\in V^{(j)}$, $1\leq i,j\leq N$, there exists $0\leq \eta_{ij}\leq 1$ such that
\begin{equation}
|b(\bm{v}^{(i)},\bm{v}^{(j)})|\leq \eta_{ij}\sqrt{b(\bm{v}^{(i)},\bm{v}^{(i)})}\sqrt{b(\bm{v}^{(j)},\bm{v}^{(j)})}.
\end{equation}
Let $\rho(\Lambda)$ be the spectral radius of the matrix $(\eta_{ij})_{1\leq i,j\leq N}$ and we have (see \cite{ToselliM})
\begin{lemma}\label{strengthenedCauchySchwarzinequality}
Let $\Lambda=(\eta_{ij})_{1\leq i,j\leq N}$. If Assumption 1 holds, then
\[\rho(\Lambda)\leq N_{0}.\]
Moreover, for any $\bm{v}^{(i)}\in V^{(i)}, \bm{v}^{(j)}\in V^{(j)}\ (i,j=1,2,...,N)$,
\begin{equation}\label{strengthened}
\sum_{i,j=1}^{N}b(\bm{v}^{(i)},\bm{v}^{(j)})\leq N_{0}\sum_{i=1}^{N}b(\bm{v}^{(i)},\bm{v}^{(i)}),\ \ \ \sum_{i,j=1}^{N}a(\bm{v}^{(i)},\bm{v}^{(j)})\leq N_{0}\sum_{i=1}^{N}a(\bm{v}^{(i)},\bm{v}^{(i)}).
\end{equation}
\end{lemma}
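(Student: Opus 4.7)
The plan is to exploit the coloring from Assumption 1: let $\{I_k\}_{k=1}^{N_0}$ denote the color classes, so that within each $I_k$ the subdomains $\{\Omega_i'\}_{i\in I_k}$ are pairwise disjoint. The critical observation is that whenever $i\neq j$ belong to the same color class, any pair of functions $\bm{v}^{(i)}\in V^{(i)}$ and $\bm{v}^{(j)}\in V^{(j)}$ has disjoint supports. Because the local spaces $V^{(i)}$ are built from edge-element basis functions vanishing outside $\Omega_i'$, this disjointness implies $b(\bm{v}^{(i)},\bm{v}^{(j)})=0$ as well as $a(\bm{v}^{(i)},\bm{v}^{(j)})=0$, and in particular $\eta_{ij}=0$ for such pairs.

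Using this, I would aggregate the local functions by color: define $\bm{w}^{(k)}:=\sum_{i\in I_k}\bm{v}^{(i)}$ for $k=1,\dots,N_0$. The disjoint-support property gives $\|\bm{w}^{(k)}\|_b^2=\sum_{i\in I_k}\|\bm{v}^{(i)}\|_b^2$. Writing the full double sum as a single squared norm and applying the triangle inequality followed by the discrete Cauchy--Schwarz inequality on the $N_0$-term sum, I would obtain
\begin{equation}\notag
\sum_{i,j=1}^{N}b(\bm{v}^{(i)},\bm{v}^{(j)})=\Bigl\|\sum_{k=1}^{N_0}\bm{w}^{(k)}\Bigr\|_b^{2}\leq\Bigl(\sum_{k=1}^{N_0}\|\bm{w}^{(k)}\|_b\Bigr)^{2}\leq N_0\sum_{k=1}^{N_0}\|\bm{w}^{(k)}\|_b^{2}=N_0\sum_{i=1}^{N}b(\bm{v}^{(i)},\bm{v}^{(i)}).
\end{equation}
Since $a$-orthogonality of disjointly supported functions is equally obvious, the identical argument with $a(\cdot,\cdot)$ replacing $b(\cdot,\cdot)$ yields the second half of \eqref{strengthened}.

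For the spectral radius bound $\rho(\Lambda)\leq N_0$, I would run the same coloring argument at the matrix level. For any nonnegative $\bm{\xi}\in\mathbb{R}^{N}$, split $\bm{\xi}=\sum_{k}\bm{\xi}_{k}$ where $\bm{\xi}_k$ is the restriction of $\bm{\xi}$ to indices in $I_k$. Because $\eta_{ij}=0$ for $i\neq j$ in the same class and $\eta_{ii}=1$, the diagonal color-blocks of $\Lambda$ are identities, which gives $\bm{\xi}_{k}^{T}\Lambda\bm{\xi}_{k}=\|\bm{\xi}_k\|^{2}$. Combining the diagonal evaluations with Cauchy--Schwarz across the $N_0$ color components yields $\bm{\xi}^{T}\Lambda\bm{\xi}\leq N_0\|\bm{\xi}\|^{2}$, and since $\Lambda$ is a symmetric nonnegative matrix, Perron--Frobenius guarantees that $\rho(\Lambda)$ is attained on the nonnegative cone, giving the claim.

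The elementary part is the functional summation inequality, which is essentially a one-line application of the triangle and Cauchy--Schwarz inequalities once the color grouping is in place. The subtle part is the spectral radius estimate: because the off-diagonal color blocks of $\Lambda$ can have operator norm exceeding one, one cannot conclude block-by-block; the key is to collect the $N_0$ color contributions and apply Cauchy--Schwarz globally on the $N_0$ scalars $\|\bm{\xi}_k\|$, which is precisely the matrix counterpart of the grouping trick used in the functional estimate. This parallelism explains why the same constant $N_0$ governs both conclusions of the lemma.
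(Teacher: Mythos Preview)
The paper does not give its own proof of this lemma; it is quoted from \cite{ToselliM}, and in the rest of the paper only the summation inequalities \eqref{strengthened} are actually invoked. Your coloring argument for \eqref{strengthened} is correct and is the standard one: aggregate by color, use disjoint supports to make each color sum an orthogonal sum, then apply the triangle and discrete Cauchy--Schwarz inequalities to the $N_0$ aggregates.

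Your argument for $\rho(\Lambda)\le N_0$, however, has a genuine gap. The step ``combining the diagonal evaluations with Cauchy--Schwarz across the $N_0$ color components yields $\bm{\xi}^{T}\Lambda\bm{\xi}\le N_0\|\bm{\xi}\|^{2}$'' is \emph{not} the matrix counterpart of the functional estimate. In the functional case the triangle inequality is available because $\|\cdot\|_b$ is a norm on a Hilbert space; by contrast $\Lambda$ need not be positive semidefinite, so $(\bm{\xi}^{T}\Lambda\bm{\xi})^{1/2}$ is not subadditive, and you have no bound of the form $\bm{\xi}_k^{T}\Lambda\bm{\xi}_l\le\|\bm{\xi}_k\|\,\|\bm{\xi}_l\|$ for off-diagonal color blocks. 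Concretely, take $N=4$, colors $I_1=\{1,3\}$, $I_2=\{2,4\}$ (so $N_0=2$), and set $\eta_{ij}=1$ if $i=j$ or $i,j$ lie in different color classes, $\eta_{ij}=0$ otherwise. This matrix satisfies every structural hypothesis your argument uses (unit diagonal, vanishing same-color off-diagonal, entries in $[0,1]$), yet has constant row sums equal to $3$, so $(1,1,1,1)^{T}$ is a Perron eigenvector with eigenvalue $3>2=N_0$; equivalently $\bm{\xi}^{T}\Lambda\bm{\xi}=12>8=N_0\|\bm{\xi}\|^{2}$ for $\bm{\xi}=(1,1,1,1)^{T}$. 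Such $\eta_{ij}=1$ values are realised whenever two overlapping local spaces share a nonzero element, so this obstruction is not artificial.

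What the coloring genuinely delivers, and what is established in \cite{ToselliM}, is that after aggregating into the $N_0$ color groups $\bm{w}^{(k)}=\sum_{i\in I_k}\bm{v}^{(i)}$, the strengthened Cauchy--Schwarz matrix for the \emph{aggregated} decomposition is $N_0\times N_0$ with unit diagonal and entries in $[0,1]$, and that matrix trivially has spectral radius at most $N_0$ by a row-sum (Gershgorin) bound. This is exactly what is needed for \eqref{strengthened}. The first assertion of the lemma, read literally for the original $N\times N$ matrix, either tacitly refers to this aggregated matrix or requires an additional bounded-neighbour hypothesis (each $\Omega_i'$ meets at most $N_0-1$ others), which is stronger than Assumption~1 alone.
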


\subsection{The Jacobi-Davidson method}
\par In this subsection, we focus on the Jacobi-Davidson method proposed by Sleijpen and Vorst (see \cite{Sleijpen}) which is an efficient algebraic method for solving eigenvalue problems. The idea of Jacobi-Davidson method is that one may first solve the Jacobi correction equation in the orthogonal complement space of the current approximation of the eigenvector and then update the new iterative solution in the expanding Davidson subspace. It suggests to compute the correction variable $e$ in the $b(\cdot,\cdot)$-orthogonal complement space of the current approximation $\bm{u}^{k}$ and expand the subspace, in which the new eigen-pair approximation $(\lambda^{k+1},\bm{u}^{k+1})$ may be found.
\par Let $Q_{\bm{u}^{k}}:E_{h}(\Omega_{h})\to span\{\bm{u}^{k}\}$ be $b(\cdot,\cdot)$-orthogonal projection and we present the detailed Jacobi-Davidson algorithm as follows:

\begin{table}[H]
\centering
\begin{tabular}{p{15cm}}
\hline
\hline
\textbf{Algorithm 1} The Jacobi-Davidson algorithm \\
\hline
1. Given the initial approximation  $(\lambda^{1},\bm{u}^{1})$ of the first eigen-pair, $W^{1}=span\{\bm{u}^{1}\}$ and stopping\\
\ \ \ \ tolerance $tol$.\\
2. For $k=$1, 2, 3, ..., denote $\bm{r}^{k}=\lambda^{k}\bm{u}^{k}-A^{h}\bm{u}^{k}$, solving the Jacobi correction equation:
\begin{equation}\label{Jacobicorrtionequation1}
(I-Q_{\bm{u}^{k}})(A^{h}-\lambda^{k})(I-Q_{\bm{u}^{k}})\bm{e}^{k+1}=\bm{r}^{k},\ \ b(\bm{e}^{k+1},\bm{u}^{k})=0.
\end{equation}
3. Minimizing the Rayleigh quotient in $W^{k+1}$\\
\begin{equation}\label{updateDavidson}
\bm{u}^{k+1}=\arg\min_{\bm{0}\ne\bm{v}\in W^{k+1}}Rq(\bm{v}),
\end{equation}
\ \ \ \ where $W^{k+1}=W^{k}+span\{\bm{e}^{k+1}\},\ \lambda^{k+1}=Rq(\bm{u}^{k+1}),$ where $Rq(\bm{v})=\frac{a(\bm{v},\bm{v})}{b(\bm{v},\bm{v})}$. \\
4. If $||\bm{r}^{k}||_{b}<tol$ or $|\lambda^{k+1}-\lambda^{k}|<tol$, then return $(\lambda^{k+1},\bm{u}^{k+1})$, otherwise goto step 2.\\
\hline
\hline
\end{tabular}
\end{table}
\par For the above algorithm, it is easy to see that $\eqref{Jacobicorrtionequation1}$ is equivalent to the following formulation
\begin{equation}
b((A^{h}-\lambda^{k})\bm{e}^{k+1},\bm{v})=b(\bm{r}^{k},\bm{v})\ \ \ \ \bm{v}\in span\{\bm{u}^{k}\}^{\perp}.
\end{equation}
The coefficient operator of the Jacobi correction equation is $A^{h}-\lambda^{k}$, which is indefinite and nearly singular as the approximation $\lambda^{k} \to \lambda_{1}^{h}$. From the fast and parallel computing point of view, we may try to design some efficient preconditioners for $A^{h}-\lambda^{k}$. Assume $B^{k}_{h}\cong A^{h}-\lambda^{k}$, then we may get the preconditioned correction equation:
\begin{equation}\notag
b(B^{k}_{h}\bm{e}^{k+1},\bm{v})=b(\bm{r}^{k},\bm{v})\ \ \ \ \forall\ \bm{v}\in (span\{\bm{u}^{k}\})^{\perp}.
\end{equation}
The correction variable $\bm{e}^{k+1}$ may be obtained by
\[ \bm{e}^{k+1}=(B^{k}_{h})^{-1}\bm{r}^{k}+\beta^{k} (B^{k}_{h})^{-1}\bm{u}^{k},  \]
where $\beta^{k}=-\frac{b((B^{k}_{h})^{-1}\bm{r}^{k},\bm{u}^{k})}{b((B^{k}_{h})^{-1}\bm{u}^{k},\bm{u}^{k})}$ is so-called orthogonal parameter.

\subsection{The two-level PHJD algorithm based on a domain decomposition}
\par Next, we shall propose our two-level preconditioned Helmholtz-Jacobi-Davidson (PHJD) algorithm.
In our case, the preconditioner $(B_{h}^{k})^{-1}$ may be chosen as
\begin{equation}\label{Preconditioner}
(B_{h}^{k})^{-1}:=(B^{k}_{0})^{-1}Q^{H}+\sum_{i=1}^{N}(B^{k}_{i})^{-1}Q^{(i)},
\end{equation}
where $Q^{H}:L^{2}(\Omega)^{3}\to E_{H}(\Omega_{H}),\ Q^{(i)}:E_{h}(\Omega_{h})\to V^{(i)}$ denote the $b(\cdot,\cdot)$-orthogonal projections, $B^{k}_{0}$ denotes $A^{H}-\lambda^{k}$ and $B^{k}_{i}(i=1,2,...,N)$ denotes $(A^{h}-\lambda^{k})|_{V^{(i)}}$, i.e., they satisfy that
for any $\bm{v}^{(i)},\bm{w}^{(i)}\in V^{(i)}$,
\begin{equation}
b(B^{k}_{i}\bm{v}^{(i)},\bm{w}^{(i)})=b((A^{h}-\lambda^{k})|_{V^{(i)}}\bm{v}^{(i)},\bm{w}^{(i)})=b((A^{h}-\lambda^{k})\bm{v}^{(i)},\bm{w}^{(i)}).
\end{equation}
Combining the scaling argument, the Poincar$\acute{e}$ inequality and inverse estimate, it is easy to see that $B^{k}_{i}$ is symmetric about $b(\cdot,\cdot)$ and
we may obtain the following properties:
\begin{equation}\label{Bik}
\lambda_{min}(B^{k}_{i}|_{V_{0}^{(i)}})=O(H^{-2}),\ \ \ \ \ \lambda_{max}(B^{k}_{i}|_{V_{0}^{(i)}})=O(h^{-2}),\ \ \ i=1,2,...,N.
\end{equation}
\begin{table}[H]
\centering
\begin{tabular}{p{15cm}}
\hline
\hline
\textbf{Algorithm 2} The two-level PHJD algorithm\\
\hline
1. Given the initial approximation $(\lambda^{1},\bm{u}^{1})$ of the first eigen-pair.\\
\begin{equation}\label{CoarseProblem1}
A^{H}\bm{u}_{1}^{H}=\lambda_{1}^{H}\bm{u}_{1}^{H},\ \ \ \ ||\bm{u}_{1}^{H}||_{b}=1.
\end{equation}
\begin{equation}\notag
     \begin{cases}\label{Helmholtz1}
         \text{Find $p^{0}_{h}\in S_{h}(\Omega_{h})$, such that}\\
          b(\nabla{p}^{0}_{h},\nabla{q}_{h})=b(\bm{u}_{1}^{H},\nabla{q}_{h})\ \ \ \ \forall\ q_{h}\in S_{h}(\Omega_{h}).
     \end{cases}
\end{equation}
\ \ \ \ Let $\bar{\bm{u}}^{1}=\bm{u}_{1}^{H}-\nabla{p}^{0}_{h},\ \ \bm{u}^{1}=\frac{\bar{\bm{u}}^{1}}{||\bar{\bm{u}}^{1}||_{b}},\ \lambda^{1}=Rq(\bm{u}^{1}),\ W^{1}=span\{\bm{u}^{1}\}.$\\
2. Solving preconditioned Jacobi correction equation, i.e., for $k=$1, 2, 3, ...,\\ \ \ \ \  denote $\bm{r}^{k}=\lambda^{k}\bm{u}^{k}-A^{h}\bm{u}^{k}$ and let
\begin{equation}\label{CorrectionEq1}
b(B_{h}^{k}\bm{e}^{k+1},\bm{v})=b(\bm{r}^{k},\bm{v})\ \ \ \ \forall\ \bm{v}\in (span\{\bm{u}^{k}\})^{\perp}.
\end{equation}
\ \ \ \ Then define
\begin{equation}\label{Correction}
\bm{e}^{k+1}:=(B_{h}^{k})^{-1}\bm{r}^{k}+\beta^{k}(B_{h}^{k})^{-1}\bm{u}^{k},
\end{equation}
\ \ \ \ where $\beta^{k}=-\frac{b((B_{h}^{k})^{-1}\bm{r}^{k},\bm{u}^{k})}{b((B_{h}^{k})^{-1}\bm{u}^{k},\bm{u}^{k})}.$\\
3. Solving the Helmholtz projection system,
\begin{equation}
     \begin{cases}\label{Helmholtz2}
           \text{Find $p^{k}_{h}\in S_{h}(\Omega_{h})$, such that}\\
          b(\nabla{p}^{k}_{h},\nabla{q}_{h})=b(\bm{e}^{k+1},\nabla{q}_{h})\ \ \ \ \forall\ q_{h}\in S_{h}(\Omega_{h}).
     \end{cases}
\end{equation}
\ \ \ \ Let $\bm{t}^{k+1}=\bm{e}^{k+1}-\nabla{p}^{k}_{h}$.\\
4. Minimizing the Rayleigh quotient in $W^{k+1},$\\
\begin{equation}\label{Davidsonexpand}
\bm{u}^{k+1}=\arg\min_{\bm{0}\ne\bm{v}\in W^{k+1}}Rq(\bm{v}),
\end{equation}
\ \ \ \ where $W^{k+1}=W^{k}+span\{\bm{t}^{k+1}\}, \lambda^{k+1}=Rq(\bm{u}^{k+1}).$\\
5. If $||\bm{r}^{k}||_{b}<tol$ or $|\lambda^{k+1}-\lambda^{k}|<tol$, then return $(\lambda^{k+1},\bm{u}^{k+1})$, otherwise goto step 2\\
\hline
\hline
\end{tabular}
\end{table}
\begin{remark}
 We are interested in the simply connected domain $\Omega$ and we may handle the kernel of the $\bm{curl}$ operator by solving the Poisson equation based on the de Rham Complex. If the domain $\Omega$ equipped with the nontrivial topology is considered, we need to consider the harmonic form space $\mathcal{\bm{H}}^{1}$ which is isomorphic to the de Rham co-homology group as well as the quotient space $ker(\bm{curl})/\nabla{H_{0}^{1}(\Omega)}$ whose dimension is the corresponding Betti number(see, e.g., \cite{Arnold,Boffi}).
\end{remark}
\begin{remark}\label{Hs}
For the case $\lambda_{1}^{H}\leq \lambda_{1}^{h}$, it is obvious that $(B_{0}^{k})^{-1}$ is well-defined. For the case $\lambda_{1}^{h}\leq \lambda_{1}^{H}$, the $B_{0}^{k}$ may be singular in our iterative procedure. In this case, we only need to refine the initial grid in step 1 in Algorithm 2 to obtain $\lambda^{1}$, which can ensure that $(B_{0}^{k})^{-1}$ is well-defined.
\end{remark}
\section{Convergence analysis}
\par In this section, we focus on giving a convergence analysis of the two-level PHJD method. First, we present some useful lemmas in our main proof. The first lemma illustrates that as the coarse grid size $H$ is sufficiently small, the distance in the sense of $||\cdot||_{b}$-norm between the first approximation of the eigenvector $\bm{u}^{1}$ with $\bm{u}^{H}_{1}$ is sufficiently small too, as well as the distance between the first approximation of the eigenvalue $\lambda^{1}$ with $\lambda^{H}_{1}$. In fact, the proof of the first inequality in the first lemma essentially needs the Hodge operator lemma (see \cite{Hiptmair1}). You may see a complete proof in \cite{chenlong}. So next, we only prove the second and the third inequality in Lemma $\ref{Lemmalambda1lambda1H}$.
\begin{lemma}\label{Lemmalambda1lambda1H}
There exists a constant C such that
\begin{equation}\label{nablaph}
||\nabla{p}^{0}_{h}||_{b}=||\bm{u}_{1}^{H}-\bar{\bm{u}}^{1}||_{b}\leq C{H},
\end{equation}
\begin{equation}\label{nablaph1}
||\bm{u}_{1}^{H}-\bm{u}^{1}||_{b}\leq C{H},
\end{equation}
\begin{equation}\label{lambda1lambda1H}
0\leq\lambda^{1}-\lambda_{1}^{H}\leq C{H}^{2},
\end{equation}
where the notations $p^{0}_{h},\bm{u}_{1}^{H},\lambda_{1}^{H},\bar{\bm{u}}^{1},\bm{u}^{1},\lambda^{1}$ were defined in the above two-level PHJD algorithm.
\end{lemma}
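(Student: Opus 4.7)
The plan is to take \eqref{nablaph} as given (its proof, which rests on the Hodge projection estimate, is documented in \cite{chenlong}) and to derive \eqref{nablaph1} and \eqref{lambda1lambda1H} from it by purely algebraic manipulations. The single structural fact driving both remaining estimates is that $\nabla p^{0}_{h}\in \nabla S_{h}(\Omega_{h})\subset E_{h}(\Omega_{h})$ lies in the kernel of $\bm{curl}$, hence is $a(\cdot,\cdot)$-orthogonal to every element of $E_{h}(\Omega_{h})$; combined with testing the defining Helmholtz equation by $q_{h}=p^{0}_{h}$, this also yields the identity $b(\nabla p^{0}_{h},\nabla p^{0}_{h})=b(\bm{u}_{1}^{H},\nabla p^{0}_{h})$.

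For \eqref{nablaph1}, I would start from the telescoping identity
\begin{equation*}
\bm{u}_{1}^{H}-\bm{u}^{1}=(\bm{u}_{1}^{H}-\bar{\bm{u}}^{1})+\Bigl(1-\frac{1}{||\bar{\bm{u}}^{1}||_{b}}\Bigr)\bar{\bm{u}}^{1}.
\end{equation*}
Since $||\bm{u}_{1}^{H}||_{b}=1$, the reverse triangle inequality together with \eqref{nablaph} gives $\bigl|\,||\bar{\bm{u}}^{1}||_{b}-1\,\bigr|\leq ||\bm{u}_{1}^{H}-\bar{\bm{u}}^{1}||_{b}\leq CH$; the $b$-norm of the second telescoping term equals exactly $\bigl|\,||\bar{\bm{u}}^{1}||_{b}-1\,\bigr|$, so \eqref{nablaph1} follows at once by the triangle inequality.

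For \eqref{lambda1lambda1H}, the key is to compute the Rayleigh quotient of $\bar{\bm{u}}^{1}$ in closed form. Using $a(\nabla p^{0}_{h},\cdot)=0$ we obtain $a(\bar{\bm{u}}^{1},\bar{\bm{u}}^{1})=a(\bm{u}_{1}^{H},\bm{u}_{1}^{H})=\lambda_{1}^{H}$, while expanding $||\bar{\bm{u}}^{1}||_{b}^{2}=1-2b(\bm{u}_{1}^{H},\nabla p^{0}_{h})+||\nabla p^{0}_{h}||_{b}^{2}$ and substituting $b(\nabla p^{0}_{h},\nabla p^{0}_{h})=b(\bm{u}_{1}^{H},\nabla p^{0}_{h})$ gives $||\bar{\bm{u}}^{1}||_{b}^{2}=1-||\nabla p^{0}_{h}||_{b}^{2}$. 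Scale-invariance of $Rq$ then yields $\lambda^{1}=Rq(\bar{\bm{u}}^{1})=\lambda_{1}^{H}/(1-||\nabla p^{0}_{h}||_{b}^{2})\geq \lambda_{1}^{H}$, and for $H$ small enough that $||\nabla p^{0}_{h}||_{b}^{2}\leq 1/2$ (ensured by \eqref{nablaph}), the estimate $1/(1-x)\leq 1+2x$ produces $\lambda^{1}-\lambda_{1}^{H}\leq 2\lambda_{1}^{H}||\nabla p^{0}_{h}||_{b}^{2}\leq CH^{2}$.

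I do not anticipate a genuine technical obstacle: everything reduces to the algebraic identity $||\bar{\bm{u}}^{1}||_{b}^{2}=1-||\nabla p^{0}_{h}||_{b}^{2}$ together with \eqref{nablaph}. The only point to verify is that $\bm{u}_{1}^{H}\in E_{H}(\Omega_{H})\subset E_{h}(\Omega_{h})$, so that the $b(\cdot,\cdot)$ pairings between $\bm{u}_{1}^{H}$ and $\nabla p^{0}_{h}$ are legitimate on the fine space; this is guaranteed by the nested construction of the meshes described in Section 3.1.
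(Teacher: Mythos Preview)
Your proposal is correct and follows essentially the same route as the paper's own proof: both take \eqref{nablaph} as given (citing \cite{chenlong}), derive \eqref{nablaph1} via the triangle and reverse-triangle inequalities applied to the splitting $\bm{u}_{1}^{H}-\bm{u}^{1}=(\bm{u}_{1}^{H}-\bar{\bm{u}}^{1})+(\bar{\bm{u}}^{1}-\bm{u}^{1})$, and obtain \eqref{lambda1lambda1H} from the explicit identity $\lambda^{1}=\lambda_{1}^{H}/(1-||\nabla p^{0}_{h}||_{b}^{2})$. Your write-up is in fact slightly more explicit than the paper's in justifying $a(\bar{\bm{u}}^{1},\bar{\bm{u}}^{1})=\lambda_{1}^{H}$ and $||\bar{\bm{u}}^{1}||_{b}^{2}=1-||\nabla p^{0}_{h}||_{b}^{2}$ via the curl-kernel and Helmholtz-projection orthogonalities, but the argument is the same.
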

\begin{proof}
We first prove $\eqref{nablaph1}$. Due to $\bar{\bm{u}}^{1}=\bm{u}_{1}^{H}-\nabla{p}_{h}^{0}$ and $\eqref{nablaph}$, we have
\begin{align*}
||\bm{u}_{1}^{H}-\bm{u}^{1}||_{b}&\leq ||\bar{\bm{u}}^{1}-\bm{u}^{1}||_{b}+||\bm{u}_{1}^{H}-\bar{\bm{u}}^{1}||_{b}=1-||\bar{\bm{u}}^{1}||_{b}+||\bm{u}_{1}^{H}-\bar{\bm{u}}^{1}||_{b}\\
&\leq 1-(||\bm{u}_{1}^{H}||_{b}-||\bm{u}_{1}^{H}-\bar{\bm{u}}^{1}||_{b})+||\bm{u}_{1}^{H}-\bar{\bm{u}}^{1}||_{b}\leq C{H}.
\end{align*}
Next, we compute the formula $\lambda^{1}-\lambda_{1}^{H}$ directly. Owing to $\bar{\bm{u}}^{1}=\bm{u}_{1}^{H}-\nabla{p}_{h}^{0}$ and $\bm{u}^{1}=\frac{\bar{\bm{u}}^{1}}{||\bar{\bm{u}}^{1}||_{b}}$, we have
\begin{align}
\lambda^{1}-\lambda_{1}^{H}&=\frac{a(\bm{u}^{1},\bm{u}^{1})}{b(\bm{u}^{1},\bm{u}^{1})}-a(\bm{u}_{1}^{H},\bm{u}_{1}^{H})
=\frac{a(\bar{\bm{u}}^{1},\bar{\bm{u}}^{1})}{b(\bar{\bm{u}}^{1},\bar{\bm{u}}^{1})}-a(\bm{u}_{1}^{H},\bm{u}_{1}^{H})\notag\\
&=(\frac{1}{1-b(\nabla{p}^{0}_{h},\nabla{p}^{0}_{h})}-1)a(\bm{u}_{1}^{H},\bm{u}_{1}^{H})\label{pp1}.
\end{align}
Because of $\eqref{nablaph}$, we obtain
\begin{align}
\lambda^{1}-\lambda_{1}^{H}
&\leq\frac{C{H}^{2}}{1-C{H}^{2}}a(\bm{u}_{1}^{H},\bm{u}_{1}^{H})\\
&\leq C{H}^{2}\notag.
\end{align}
Moreover, from $\eqref{pp1}$, we know that  $\lambda^{1}-\lambda_{1}^{H}\geq0$. \qed
\end{proof}
\par We now illustrate that all the norms $||\cdot||_{a},||\cdot||_{E^{k}},||\cdot||_{E^{h}},||\cdot||_{E}$ defined in $M^{\perp}_{h}(\lambda_{1})$ are equivalent, i.e., these topologies induced by these norms are homeomorphic.
\begin{lemma}\label{normequivalence}
The following bilinear forms construct the inner product in $M^{\perp}_{h}(\lambda_{1})$,
\begin{align*}
&\text{$(\bm{v}_{2}^{h},\bm{v}_{2}^{h})_{E^{k}}:=a(\bm{v}_{2}^{h},\bm{v}_{2}^{h})-\lambda^{k}b(\bm{v}_{2}^{h},\bm{v}_{2}^{h})$}\ \ \ \   \forall\ \bm{v}_{2}^{h}\in M^{\perp}_{h}(\lambda_{1}),\\
&\text{$(\bm{v}_{2}^{h},\bm{v}_{2}^{h})_{E^{h}}:=a(\bm{v}_{2}^{h},\bm{v}_{2}^{h})-\lambda_{1}^{h}b(\bm{v}_{2}^{h},\bm{v}_{2}^{h})$}\ \ \ \   \forall\ \bm{v}_{2}^{h}\in M^{\perp}_{h}(\lambda_{1}),\\
&\text{$(\bm{v}_{2}^{h},\bm{v}_{2}^{h})_{E}:=a(\bm{v}_{2}^{h},\bm{v}_{2}^{h})-\lambda_{1}b(\bm{v}_{2}^{h},\bm{v}_{2}^{h})$}\ \ \ \ \
\forall\ \bm{v}_{2}^{h}\in M^{\perp}_{h}(\lambda_{1}).
\end{align*}
 Moreover, the norms $||\cdot||_{E^{k}},||\cdot||_{E^{h}},||\cdot||_{E}$ induced by above inner products and the norm $||\cdot||_{a}$ in $M^{\perp}_{h}(\lambda_{1})$ are equivalent.
\end{lemma}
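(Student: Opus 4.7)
The plan is to expand every element of $M_h^\perp(\lambda_1)$ in the discrete eigenbasis and read off positivity and equivalence from simple weighted sums.

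First I would note that, by the spectral decomposition \eqref{discretespectralspaces}, every $\bm{v}_2^h\in M_h^\perp(\lambda_1)$ can be written uniquely as $\bm{v}_2^h=\sum_{i\ge 2}c_i\bm{u}_i^h$ with the $\bm{u}_i^h$ satisfying the orthonormality $a(\bm{u}_i^h,\bm{u}_j^h)=\lambda_i^h\delta_{ij}$ and $b(\bm{u}_i^h,\bm{u}_j^h)=\delta_{ij}$. Then
\begin{equation*}
\|\bm{v}_2^h\|_a^2=\sum_{i\ge 2}\lambda_i^h c_i^2,\qquad \|\bm{v}_2^h\|_{E^h}^2=\sum_{i\ge 2}(\lambda_i^h-\lambda_1^h)c_i^2,
\end{equation*}
and analogously for $\|\cdot\|_E$ and $\|\cdot\|_{E^k}$ with $\lambda_1^h$ replaced by $\lambda_1$ and $\lambda^k$ respectively. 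Positive definiteness of $(\cdot,\cdot)_{E^h}$ on $M_h^\perp(\lambda_1)$ follows immediately from the simple-eigenvalue assumption $\lambda_1^h<\lambda_2^h\le\lambda_i^h$ for $i\ge 2$ in the Remark preceding the decomposition.

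Next I would treat $(\cdot,\cdot)_E$ using Corollary \ref{priorecorollary}: since $|\lambda_1-\lambda_1^h|\le Ch^2$, for $h$ small the spectral gap $\lambda_2^h-\lambda_1\ge (\lambda_2^h-\lambda_1^h)-Ch^2>0$ is still positive, so $(\cdot,\cdot)_E$ is an inner product. For $(\cdot,\cdot)_{E^k}$ I would invoke the fact that the Davidson step \eqref{Davidsonexpand} minimises the Rayleigh quotient over an expanding subspace containing the previous iterate, so $\lambda^k$ is monotonically decreasing, and Lemma \ref{Lemmalambda1lambda1H} gives $\lambda^1\le\lambda_1^H+CH^2$, itself close to $\lambda_1$; combining these, for $H$ sufficiently small one has $\lambda^k<\tfrac{1}{2}(\lambda_1^h+\lambda_2^h)<\lambda_2^h\le\lambda_i^h$ for every $i\ge 2$ and every $k$, so the coefficients $\lambda_i^h-\lambda^k$ are strictly positive.

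For the norm equivalence, I would use the uniform two-sided bound
\begin{equation*}
\Bigl(1-\frac{\lambda_*}{\lambda_2^h}\Bigr)\lambda_i^h\le \lambda_i^h-\lambda_*\le \lambda_i^h,\qquad i\ge 2,
\end{equation*}
valid with $\lambda_*\in\{\lambda_1^h,\lambda_1,\lambda^k\}$ (once $h,H$ are small enough so that $\lambda_*<\lambda_2^h$ uniformly, by the argument above). Plugging this into the eigenbasis expansion gives
\begin{equation*}
C_1\|\bm{v}_2^h\|_a^2\le \|\bm{v}_2^h\|_{E^{\bullet}}^2\le \|\bm{v}_2^h\|_a^2
\end{equation*}
with a positive constant $C_1$ depending only on the fixed ratio $1-\lambda_*/\lambda_2^h$, which is the desired equivalence of all four norms on $M_h^\perp(\lambda_1)$.

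The main obstacle is the $E^k$-case, because $\lambda^k$ moves with the iteration; the crux is establishing the uniform gap $\lambda^k<\lambda_2^h$ independently of $k$, and for this I would rely on the monotone non-increase of the Rayleigh quotient together with the initial bound from Lemma \ref{Lemmalambda1lambda1H}. The remaining inequalities are routine weighted-sum estimates.
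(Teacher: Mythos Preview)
Your proof is correct and follows essentially the same route as the paper's: both arguments hinge on the uniform spectral gap $\lambda_2^h-\lambda^k\ge C>0$, obtained by combining the monotone non-increase of the Rayleigh quotient with the initial bound from Lemma~\ref{Lemmalambda1lambda1H}, and then translate this gap into the two-sided norm equivalence. Your eigenbasis expansion makes the weighted-sum structure explicit, whereas the paper works directly with the inequality $a(\bm v_2^h,\bm v_2^h)\ge\lambda_2^h\,b(\bm v_2^h,\bm v_2^h)$ on $M_h^\perp(\lambda_1)$ and rearranges to obtain $a(\bm v_2^h,\bm v_2^h)\le\frac{\lambda_2^h}{\lambda_2^h-\lambda^k}(\bm v_2^h,\bm v_2^h)_{E^k}$; the resulting constants coincide. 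One small point of phrasing: the ratio $1-\lambda_*/\lambda_2^h$ is not literally ``fixed'' but varies with $h,H,k$; what matters (and what you have already argued) is that it is bounded below uniformly because $\lambda_2^h\to\lambda_2$ and $\lambda_*$ stays in a neighbourhood of $\lambda_1<\lambda_2$.
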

\begin{proof}
   Combining $\eqref{lambda1lambda1H}$ and the fact that the Rayleigh quotient $Rq(\bm{v})$ is a monotonic decreasing function with expanding Davidson space $W^{k}$, for sufficiently small $H$, we have
\begin{equation}\notag
\lambda_{2}^{h}-\lambda^{k}\geq C,
\end{equation}
where the constant $C$ is independent of $h, H$. Furthermore, for any $\bm{v}_{2}^{h}(\ne \bm{0})\in M^{\perp}_{h}(\lambda_{1})$, we have
\begin{align}
(\bm{v}_{2}^{h},\bm{v}_{2}^{h})_{E^{k}}&=a(\bm{v}_{2}^{h},\bm{v}_{2}^{h})-\lambda^{k}b(\bm{v}_{2}^{h},\bm{v}_{2}^{h})\notag\\
&\geq (\lambda_{2}^{h}-\lambda^{k})b(\bm{v}_{2}^{h},\bm{v}_{2}^{h}) \label{line2}\\
&\geq C||\bm{v}_{2}^{h}||^{2}_{b}\notag>0.
\end{align}
Hence, we know that $(\cdot,\cdot)_{E^{k}}$ defines an inner product in $M^{\perp}(\lambda_{1})$.
For any $\bm{v}_{2}^{h}\in M^{\perp}_{h}(\lambda_{1})$, we have
\[a(\bm{v}_{2}^{h},\bm{v}_{2}^{h})\geq a(\bm{v}_{2}^{h},\bm{v}_{2}^{h})-\lambda^{k}b(\bm{v}_{2}^{h},\bm{v}_{2}^{h})=(\bm{v}_{2}^{h},\bm{v}_{2}^{h})_{E^{k}}. \]
Owing to $\eqref{line2}$, we know $(\bm{v}_{2}^{h},\bm{v}_{2}^{h})_{E^{k}}\geq (\lambda_{2}^{h}-\lambda^{k})b(\bm{v}_{2}^{h},\bm{v}_{2}^{h})$. Hence,
\[(\bm{v}_{2}^{h},\bm{v}_{2}^{h})_{E^{k}}=a(\bm{v}_{2}^{h},\bm{v}_{2}^{h})-\lambda^{k}b(\bm{v}_{2}^{h},\bm{v}_{2}^{h})\geq a(\bm{v}_{2}^{h},\bm{v}_{2}^{h})-\frac{\lambda^{k}}{\lambda_{2}^{h}-\lambda^{k}}(\bm{v}_{2}^{h},\bm{v}_{2}^{h})_{E^{k}}, \]
that is
\[ a(\bm{v}_{2}^{h},\bm{v}_{2}^{h})\leq (1+\frac{\lambda^{k}}{\lambda_{2}^{h}-\lambda^{k}})(\bm{v}_{2}^{h},\bm{v}_{2}^{h})_{E^{k}}.\]
It is easy to see that as $h\to 0^{+},$
\[1+\frac{\lambda^{k}}{\lambda_{2}^{h}-\lambda^{k}}=\frac{\lambda_{2}^{h}}{\lambda_{2}^{h}-\lambda^{k}}
\to\frac{\lambda_{2}}{\lambda_{2}-\lambda^{k}}, \]
$i.e.$, there exists a constant $\eta$ which is independent of $h,H$, such that
$0<\frac{\lambda_{2}}{\lambda_{2}-\lambda^{k}}-\eta\leq \frac{\lambda_{2}^{h}}{\lambda_{2}^{h}-\lambda^{k}}\leq \frac{\lambda_{2}}{\lambda_{2}-\lambda^{k}}+\eta $. Other conclusions in this lemma can be proved by a similar argument. \qed
\end{proof}

\par The following lemma, which may be proved by using the same technique as in the case of nodal spaces, may be regarded as the 'bridge' between the coarse space and the fine space. (see \cite{ToselliM,Toselli} and \cite{Bramble1})
\begin{lemma}\label{QH}
Let $\tau_{H}$ be a shape-regular and quasi-uniform triangulation. Then
\begin{equation}\label{curlQH}
||\bm{curl} Q^{H}\bm{u}||_{b}\leq C|\bm{u}|_{1,\Omega} \ \ \ \ \ \ \forall\ \bm{u}\in H^{1}(\Omega)^{3},
\end{equation}
\begin{equation}\label{curlQH1}
||\bm{u}-Q^{H}\bm{u}||_{b}\leq CH|\bm{u}|_{1,\Omega} \ \ \ \ \ \ \forall\ \bm{u}\in H^{1}(\Omega)^{3}.
\end{equation}
\end{lemma}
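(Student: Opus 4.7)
The plan is to reduce both inequalities to properties of a well-chosen quasi-interpolant into $E_H(\Omega_H)$ and then invoke the optimality of the $L^2$-projection together with an inverse estimate. Specifically, I would invoke from the references cited (see \cite{ToselliM,Toselli}) the existence of a Cl\'ement/Sch\"oberl-type quasi-interpolation operator $\Pi_{H}:H^{1}(\Omega)^{3}\to E_{H}(\Omega_{H})$ enjoying the two standard properties
\begin{equation}\notag
\|\bm{u}-\Pi_{H}\bm{u}\|_{b}\leq CH|\bm{u}|_{1,\Omega},\qquad \|\bm{curl}\,\Pi_{H}\bm{u}\|_{b}\leq C|\bm{u}|_{1,\Omega}.
\end{equation}
Such an operator is available on a shape-regular quasi-uniform coarse mesh $\tau_{H}$ because the curl of the canonical Ra\-viart--Thomas degrees of freedom commutes with $\bm{curl}$, so local stability in $|\cdot|_{1}$ transfers to stability in the curl seminorm.

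For $\eqref{curlQH1}$, the argument is immediate: since $Q^{H}$ is the $b(\cdot,\cdot)$-orthogonal projection onto $E_{H}(\Omega_{H})$ and $\Pi_{H}\bm{u}\in E_{H}(\Omega_{H})$,
\begin{equation}\notag
\|\bm{u}-Q^{H}\bm{u}\|_{b}\leq \|\bm{u}-\Pi_{H}\bm{u}\|_{b}\leq CH|\bm{u}|_{1,\Omega},
\end{equation}
which is exactly the desired approximation bound.

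For $\eqref{curlQH}$, I would split
\begin{equation}\notag
\bm{curl}\,Q^{H}\bm{u}=\bm{curl}\,\Pi_{H}\bm{u}+\bm{curl}\,(Q^{H}\bm{u}-\Pi_{H}\bm{u}).
\end{equation}
The first piece is bounded by $C|\bm{u}|_{1,\Omega}$ directly from the interpolation stability above. For the second piece, note that $Q^{H}\bm{u}-\Pi_{H}\bm{u}\in E_{H}(\Omega_{H})$, so the standard inverse estimate on the quasi-uniform coarse mesh gives $\|\bm{curl}(Q^{H}\bm{u}-\Pi_{H}\bm{u})\|_{b}\leq CH^{-1}\|Q^{H}\bm{u}-\Pi_{H}\bm{u}\|_{b}$. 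Applying the triangle inequality and the approximation bound proved in the previous step,
\begin{equation}\notag
\|Q^{H}\bm{u}-\Pi_{H}\bm{u}\|_{b}\leq \|\bm{u}-Q^{H}\bm{u}\|_{b}+\|\bm{u}-\Pi_{H}\bm{u}\|_{b}\leq CH|\bm{u}|_{1,\Omega},
\end{equation}
and the $H^{-1}$ from the inverse estimate cancels the $H$, yielding $\|\bm{curl}(Q^{H}\bm{u}-\Pi_{H}\bm{u})\|_{b}\leq C|\bm{u}|_{1,\Omega}$. Combining the two pieces finishes the proof.

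The main technical obstacle is the first step: producing an interpolant $\Pi_{H}$ that is simultaneously $H^{1}$-stable (so $\bm{curl}\,\Pi_{H}$ inherits an $H^{1}$-seminorm bound) and locally $H$-accurate in $L^{2}$, since the canonical N\'ed\'elec interpolant is not well-defined on $H^{1}(\Omega)^{3}$ (its edge degrees of freedom require more regularity). This is resolved by the regularized/Cl\'ement-type constructions in \cite{ToselliM,Toselli}; once its existence with the listed properties is assumed, the remaining argument is short and purely mechanical through the inverse inequality on the coarse, quasi-uniform mesh.
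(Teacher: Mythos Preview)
Your argument is correct and is precisely the standard route. Note, however, that the paper does not actually supply a proof of this lemma: it is stated without proof, with the remark that it ``may be proved by using the same technique as in the case of nodal spaces'' and a citation to \cite{ToselliM,Toselli,Bramble1}. Your proposal---reduce to a Cl\'ement/Sch\"oberl-type quasi-interpolant with the right $L^2$-approximation and curl-stability properties, then use $L^2$-optimality of $Q^H$ for \eqref{curlQH1} and an inverse estimate on the coarse mesh for \eqref{curlQH}---is exactly the technique from those references (the Bramble $H^1$-stability argument for the nodal $L^2$-projection, transported to the edge setting), so there is nothing to contrast.
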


\par Furthermore, we have
\begin{lemma}\label{LemmaK0hQ2H}
For any $\bm{v}^{h}\in E_{h}(\Omega_{h})$, it holds that
\begin{equation}\label{K0Q1H}
||K_{0}^{h}Q_{1}^{H}\bm{v}^{h}||_{b}\leq CH||Q_{1}^{H}\bm{v}^{h}||_{b},
\end{equation}
\begin{equation}\label{Q2Q1H}
||Q_{2}^{h}Q_{1}^{H}\bm{v}^{h}||_{b}\leq CH||Q_{1}^{H}\bm{v}^{h}||_{b},\ \ and \ \ ||Q_{2}^{h}Q_{1}^{H}\bm{v}^{h}||_{a}\leq CH||Q_{1}^{H}\bm{v}^{h}||_{a},
\end{equation}
\begin{equation}\label{Q2HQ1}
||Q_{2}^{H}Q_{1}^{h}\bm{v}^{h}||_{b}\leq CH||Q_{1}^{h}\bm{v}^{h}||_{b},\ \ and \ \ ||Q_{2}^{H}Q_{1}^{h}\bm{v}^{h}||_{a}\leq CH||Q_{1}^{h}\bm{v}^{h}||_{a},
\end{equation}
where $C$ is independent of $H, h$.
\end{lemma}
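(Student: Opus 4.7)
The one-dimensionality of $M_H(\lambda_1)=\mathrm{span}\{\bm u_1^H\}$ and $M_h(\lambda_1)=\mathrm{span}\{\bm u_1^h\}$ means that in each of the four inequalities both sides are homogeneous of degree one in the scalar $c_H$ (resp.\ $c_h$) such that $Q_1^H\bm v^h=c_H\bm u_1^H$ (resp.\ $Q_1^h\bm v^h=c_h\bm u_1^h$); the scalar therefore cancels, and each bound reduces to a single estimate on $K_0^h\bm u_1^H$, $Q_2^h\bm u_1^H$, or $Q_2^H\bm u_1^h$ alone. When the right-hand side is in the $\|\cdot\|_a$-norm, the factor $\sqrt{\lambda_1^H}=\|\bm u_1^H\|_a$ or $\sqrt{\lambda_1^h}=\|\bm u_1^h\|_a$ is bounded below by a positive constant independent of $h,H$ (by Corollary~\ref{priorecorollary}) and may be absorbed into $C$. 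Inequality \eqref{K0Q1H} is then immediate: step~1 of Algorithm~2 gives $\bm u_1^H=\nabla p_h^0+\bar{\bm u}^1$ with $\bar{\bm u}^1\in E_h^0(\Omega_h;\epsilon_r)$, which is exactly the fine-level $b$-orthogonal Helmholtz splitting of $\bm u_1^H$, so $K_0^h\bm u_1^H=\nabla p_h^0$ and Lemma~\ref{Lemmalambda1lambda1H} supplies $\|\nabla p_h^0\|_b\le CH$.

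For the $b$-norm parts of \eqref{Q2Q1H} and \eqref{Q2HQ1}, the plan is to exploit $Q_2^h\bm u_1^h=\bm 0$ and $Q_2^H\bm u_1^H=\bm 0$: with signs chosen so that both $\bm u_1^h$ and $\bm u_1^H$ approach the same continuous eigenvector $\bm u_1$, Corollary~\ref{priorecorollary} and the triangle inequality give
\[\|\bm u_1^h-\bm u_1^H\|_b\le\|\bm u_1^h-\bm u_1\|_b+\|\bm u_1-\bm u_1^H\|_b\le Ch+CH\le CH,\]
so $\|Q_2^h\bm u_1^H\|_b=\|Q_2^h(\bm u_1^H-\bm u_1^h)\|_b\le CH$ and $\|Q_2^H\bm u_1^h\|_b=\|Q_2^H(\bm u_1^h-\bm u_1^H)\|_b\le CH$ by $\|\cdot\|_b$-contractivity of orthogonal projections. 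The $a$-norm part of \eqref{Q2Q1H} then follows from the further split $\bm u_1^H-\bm u_1^h=K_0^h(\bm u_1^H-\bm u_1^h)+(I-K_0^h)(\bm u_1^H-\bm u_1^h)$: the first piece is annihilated by $Q_2^h$, and the second lies in $E_h^0(\Omega_h;\epsilon_r)$, where the decomposition $M_h(\lambda_1)\oplus M_h^\perp(\lambda_1)$ is simultaneously $a$- and $b$-orthogonal, so $Q_2^h$ acts as an $a$-contraction there; since $\bm{curl}\circ K_0^h=\bm 0$, one has $\|(I-K_0^h)(\bm u_1^H-\bm u_1^h)\|_a=\|\bm u_1^H-\bm u_1^h\|_a\le CH$ via the $\|\cdot\|_c$-version of Corollary~\ref{priorecorollary}.

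The $a$-norm bound in \eqref{Q2HQ1} is the main obstacle. Factoring $Q_2^H$ through the $L^2$-$b$-projection $Q^H$ onto $E_H(\Omega_H)$ and then through $I-K_0^H$ onto $E_H^0(\Omega_H;\epsilon_r)$, and using the analogous $a$-contraction of $Q_2^H$ on the coarse divergence-free space, one obtains
\[\|Q_2^H(\bm u_1^h-\bm u_1^H)\|_a\le\|(I-K_0^H)Q^H(\bm u_1^h-\bm u_1^H)\|_a\le C\,\|\bm{curl}\,Q^H(\bm u_1^h-\bm u_1^H)\|_b,\]
so the task reduces to $\|\bm{curl}\,Q^H(\bm u_1^h-\bm u_1^H)\|_b\le CH$. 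The difficulty is that $Q^H$ is \emph{not} curl-stable on generic $\bm H_0(\bm{curl};\Omega)$-functions, and Lemma~\ref{QH} requires $H^1$-regularity that $\bm u_1^h-\bm u_1^H$ lacks globally. The plan is to invoke the regular Hodge decomposition on the convex domain, $\bm u_1^h-\bm u_1^H=\bm z+\nabla\phi$ with $\bm z\in H^1(\Omega)^3\cap\bm H_0(\bm{curl};\Omega)$ and $|\bm z|_{1,\Omega}\le C\|\bm{curl}(\bm u_1^h-\bm u_1^H)\|_b\le C\|\bm u_1^h-\bm u_1^H\|_a\le CH$: Lemma~\ref{QH} applied to $\bm z$ yields $\|\bm{curl}\,Q^H\bm z\|_b\le CH$, while the gradient part is handled by replacing $Q^H$ with a bounded cochain projection $\Pi_H^c$ whose commuting diagram forces $\bm{curl}\,\Pi_H^c\nabla\phi=\bm 0$, so that the residual $\bm{curl}\,(Q^H-\Pi_H^c)\nabla\phi$ is reduced to an $L^2$-bound of the form $\|\nabla\phi-\Pi_H^c\nabla\phi\|_b\le C\|\nabla\phi\|_b\le C\|\bm u_1^h-\bm u_1^H\|_b\le CH$. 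This non-nestedness between $E_h^0(\Omega_h;\epsilon_r)$ and $E_H^0(\Omega_H;\epsilon_r)$ is precisely the third difficulty flagged in the introduction, and resolving it cleanly is the core technical content of the lemma.
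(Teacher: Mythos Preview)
The paper's proof is considerably simpler and more unified than your plan. For every inequality it uses one pattern: given $\bm v_1^H\in M_H(\lambda_1)$ (or $\bm v_1^h\in M_h(\lambda_1)$), Corollary~\ref{priorecorollary} and the triangle inequality through $M(\lambda_1)$ supply an $\bm\eta_1^h\in M_h(\lambda_1)$ (resp.\ $\bm\eta_1^H\in M_H(\lambda_1)$) with $\|\bm v_1^H-\bm\eta_1^h\|\le CH\|\bm v_1^H\|$ in the relevant norm; since the projection $P\in\{K_0^h,Q_2^h,Q_2^H\}$ annihilates this auxiliary element, one writes $\|P\bm v_1^H\|^2=(P\bm v_1^H,\bm v_1^H-\bm\eta_1^h)$ and finishes by Cauchy--Schwarz. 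No appeal to Lemma~\ref{Lemmalambda1lambda1H}, no regular Hodge decomposition, no cochain projections. Your route to \eqref{K0Q1H} via $\nabla p_h^0$ is correct but roundabout; your $b$-norm arguments and the $a$-norm argument for \eqref{Q2Q1H} coincide with the paper's once unpacked.

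Your handling of the $a$-norm bound in \eqref{Q2HQ1}, however, has a genuine gap. After the split $\bm u_1^h-\bm u_1^H=\bm z+\nabla\phi$ you need $\|\bm{curl}\,Q^H\nabla\phi\|_b\le CH$. The commuting property gives $\bm{curl}\,\Pi_H^c\nabla\phi=0$, so the residual is $\bm{curl}\,(Q^H-\Pi_H^c)\nabla\phi=\bm{curl}\,Q^H(\nabla\phi-\Pi_H^c\nabla\phi)$, an $E_H$-function whose $\bm{curl}$-norm can only be controlled by its $L^2$-norm through an inverse inequality costing $H^{-1}$. A bounded cochain projection yields at best $\|\nabla\phi-\Pi_H^c\nabla\phi\|_b\le C\|\nabla\phi\|_b$ with no extra power of $H$, because $\nabla\phi$ is merely in $L^2$; the product is $O(1)$, not $O(H)$. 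Moreover, $\nabla\phi-\Pi_H^c\nabla\phi=\nabla(\phi-\Pi_H^0\phi)$ is again a gradient, so the manoeuvre is circular. It is worth noting that the paper's ``similar argument'' is also not literally applicable here: the identity $\|Q_2^HQ_1^h\bm v^h\|_a^2=a(Q_2^HQ_1^h\bm v^h,Q_1^h\bm v^h-\bm\eta_1^H)$ would require $Q_2^H$ to be $a$-orthogonal on $E_h$, which fails since $Q_1^h\bm v^h\notin E_H$. Fortunately the $a$-norm half of \eqref{Q2HQ1} is never used in the remainder of the paper (only the $b$-norm half appears, e.g.\ in \eqref{numeratorfirst}, \eqref{K0HQ1HQ2Hfirst}, \eqref{twocomestimate1}, \eqref{twocomestimate3}), so the lemma \emph{as actually needed} is covered by both your argument and the paper's.
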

\begin{proof}
For any $\bm{v}_{1}^{H}\in M_{H}(\lambda_{1})$, let $\bm{u}_{1}^{H}=\frac{\bm{v}_{1}^{H}}{||\bm{v}_{1}^{H}||_{b}}$. According to the Theorem $\ref{prioreTheorem}$ and Corollary $\ref{priorecorollary}$, we know that there exists a vector $\bm{w}_{1}\in M(\lambda_{1})\ ( ||\bm{w}_{1}||_{b}=1)$ such that
\[||\bm{u}_{1}^{H}-\bm{w}_{1}||_{b}\leq CH.   \]
By multiplying the $||\bm{v}_{1}^{H}||_{b}$ at the both sides of the above inequality, we have
\[||\bm{v}_{1}^{H}-||\bm{v}_{1}^{H}||_{b}\bm{w}_{1}||_{b}\leq CH||\bm{v}_{1}^{H}||_{b}.   \]
Define $\bm{\xi}_{1}:=||\bm{v}_{1}^{H}||_{b}\bm{w}\in M(\lambda_{1})$, we have $||\bm{\xi}_{1}||_{b}=||\bm{v}_{1}^{H}||_{b}$. Moreover,
\begin{equation}\label{K0hQ1H1}
||\bm{v}_{1}^{H}-\bm{\xi}_{1}||_{b}\leq CH||\bm{\xi}_{1}||_{b}.
\end{equation}
For the $\bm{\xi}_{1}\in M(\lambda_{1})$, similarly, we may find a $\bm{v}_{1}^{h}\in M_{h}(\lambda_{1})$ $(||\bm{v}_{1}^{h}||_{b}=||\bm{\xi}_{1}||_{b})$ such that
\begin{equation}\label{K0hQ1H2}
||\bm{\xi}_{1}-\bm{v}_{1}^{h}||_{b}\leq Ch||\bm{\xi}_{1}||_{b}.
\end{equation}
Hence, combining $\eqref{K0hQ1H1},\eqref{K0hQ1H2}$ with the triangle inequality, we obtain
\begin{equation}\label{K0hQ1H3}
||\bm{v}_{1}^{H}-\bm{v}_{1}^{h}||_{b}\leq CH||\bm{v}_{1}^{H}||_{b}.
\end{equation}
For $Q_{1}^{H}\bm{v}^{h}\in M_{H}(\lambda_{1})$, there exists a $\bm{\eta}_{1}^{h}\in M_{h}(\lambda_{1})$ such that
\[  ||Q_{1}^{H}\bm{v}^{h}-\bm{\eta}_{1}^{h}||_{b}\leq CH||Q_{1}^{H}\bm{v}^{h}||_{b}. \]
Then
\begin{align*}
&\ \ \ \ ||K_{0}^{h}Q_{1}^{H}\bm{v}^{h}||_{b}^{2}=b(K_{0}^{h}Q_{1}^{H}\bm{v}^{h},Q_{1}^{H}\bm{v}^{h}-\bm{\eta}_{1}^{h})\\
&=||K_{0}^{h}Q_{1}^{H}\bm{v}^{h}||_{b}||Q_{1}^{H}\bm{v}^{h}-\bm{\eta}_{1}^{h}||_{b}\leq CH||K_{0}^{h}Q_{1}^{H}\bm{v}^{h}||_{b}||Q_{1}^{H}\bm{v}^{h}||_{b}.
\end{align*}
Finally,
\[||K_{0}^{h}Q_{1}^{H}\bm{v}^{h}||_{b}\leq CH||Q_{1}^{H}\bm{v}^{h}||_{b}.\]
Note that the norm $||\cdot||_{c}$ is equivalent to the norm $||\cdot||_{a}$ in the discrete divergence-free space. By corollary $\ref{priorecorollary}$,
we may prove $\eqref{Q2Q1H}$ and $\eqref{Q2HQ1}$ by a similar argument.\qed
\end{proof}
\par   For the convenience of the following convergence analysis, we choose a special case to analyze the error reduction. Let
$\bm{u}^{k}=\arg\min_{\bm{0}\ne\bm{v}\in W^{k}}Rq(\bm{v})$ and $\lambda^{k}=Rq(\bm{u}^{k})$. Next, we minimize the $Rq(\bm{v})$ in $W^{k+1}$ and choose the $\check{\bm{u}}^{k+1}$ in $W^{k+1}$ ($||\check{\bm{u}}^{k+1}||_{b}=1$) to analyze the error reduction, i.e.,
\begin{equation}\label{specialcase1}
\widetilde{\bm{u}}^{k+1}=\bm{u}^{k}+\alpha \bm{t}^{k+1},
\end{equation}
and set
\begin{equation}\label{specialcase2}
\check{\bm{u}}^{k+1}=\frac{\widetilde{\bm{u}}^{k+1}}{||\widetilde{\bm{u}}^{k+1}||_{b}},
\end{equation}
where $\alpha$ is an undetermined parameter depending on the $N_{0}$. By the above analysis, we know that $\lambda^{k+1}\leq \check{\lambda}^{k+1}$, where $\check{\lambda}^{k+1}=Rq(\check{\bm{u}}^{k+1})$.
\par Owing to the Helmholtz projection, we know that $\widetilde{\bm{u}}^{k+1}\in E^{0}_{h}(\Omega_{h};\epsilon_{r})$  and we denote
\begin{equation}\label{u1e2definition}
\bm{u}_{1}^{k}:=Q_{1}^{h}\bm{u}^{k},\ \ \ \ \ \ \bm{e}_{2}^{k}:=-Q_{2}^{h}\bm{u}^{k},\ \ \ \ \ \widetilde{\bm{e}}_{2}^{k}:=-Q_{2}^{h}\widetilde{\bm{u}}^{k}.
\end{equation}
From $\eqref{Correction}$, $\eqref{specialcase1}$, it is easy to see that
\begin{align*}\notag
\widetilde{\bm{e}}_{2}^{k+1}&=-Q_{2}^{h}\{\bm{u}^{k}+\alpha \bm{t}^{k+1}\}=\bm{e}_{2}^{k}-\alpha Q_{2}^{h}\{\bm{e}^{k+1}-\nabla{p}_{h}^{k}\}\\
&=\bm{e}_{2}^{k}-\alpha Q_{2}^{h}\bm{e}^{k+1}=\bm{e}_{2}^{k}-\alpha Q_{2}^{h}\{(B_{h}^{k})^{-1}\bm{r}^{k}+\beta^{k}(B_{h}^{k})^{-1}\bm{u}^{k}\}.
\end{align*}
We substitute the expression of the preconditioner $\eqref{Preconditioner}$ into the above formula, and then obtain
\begin{align*}
\widetilde{\bm{e}}_{2}^{k+1}&=\bm{e}_{2}^{k}-\alpha Q_{2}^{h}\{(B_{h}^{k})^{-1}\bm{r}^{k}+\beta^{k}(B_{h}^{k})^{-1}\bm{u}^{k}\}\\
&=\bm{e}_{2}^{k}-\alpha Q_{2}^{h}\{(B^{k}_{0})^{-1}Q^{H}\bm{r}^{k}+\sum_{i=1}^{N}(B^{k}_{i})^{-1}Q^{(i)}\bm{r}^{k}\\
&\ \ \ \ +\beta^{k}(B^{k}_{0})^{-1}Q^{H}\bm{u}^{k}+\beta^{k}\sum_{i=1}^{N}(B^{k}_{i})^{-1}Q^{(i)}\bm{u}^{k}\}\\
&=\bm{e}_{2}^{k}-\alpha Q_{2}^{h}\{(B^{k}_{0})^{-1}(K_{0}^{H}+Z^{(0)})Q^{H}\bm{r}^{k}+\sum_{i=1}^{N}(B^{k}_{i})^{-1}(K^{(i)}+Z^{(i)})Q^{(i)}\bm{r}^{k}\\
&\ \ \ +\beta^{k}(B^{k}_{0})^{-1}(K_{0}^{H}+Z^{(0)})Q^{H}\bm{u}^{k}+\beta^{k}\sum_{i=1}^{N}(B^{k}_{i})^{-1}(K^{(i)}+Z^{(i)})Q^{(i)}\bm{u}^{k}\},
\end{align*}
where the last equality has used the partitions of the identity operator defined on $E_{H}(\Omega_{H})$ and $V^{(i)}$.
For any $\bm{v}\in E_{h}(\Omega_{h})$, $(B^{k}_{0})^{-1}K_{0}^{H}Q^{H}\bm{v}\in \nabla{S_{H}(\Omega_{H})}\subset \nabla{S_{h}(\Omega_{h})}$, $(B^{k}_{i})^{-1}K^{(i)}Q^{(i)}\bm{v} \in \nabla{S^{(i)}_{h}}\subset\nabla{S_{h}(\Omega_{h})}$, so we have
\[Q_{2}^{h}(B^{k}_{0})^{-1}K_{0}^{H}Q^{H}\bm{v}=\bm{0},\ \ \ Q_{2}^{h}(B^{k}_{i})^{-1}K^{(i)}Q^{(i)}\bm{v}=\bm{0}.  \]
Furthermore, by using the partitions of the identity operator defined on $E^{0}_{H}(\Omega_{H};\epsilon_{r})$, we obtain
\begin{align*}
\widetilde{\bm{e}}_{2}^{k+1}&=\bm{e}_{2}^{k}-\alpha Q_{2}^{h}(B^{k}_{0})^{-1}Z^{(0)}Q^{H}\bm{r}^{k}-\alpha Q_{2}^{h}\sum_{i=1}^{N}(B^{k}_{i})^{-1}Z^{(i)}Q^{(i)}\bm{r}^{k}\\
&\ \ \ -\alpha\beta^{k}Q_{2}^{h}(B^{k}_{0})^{-1}Z^{(0)}Q^{H}\bm{u}^{k}-\alpha\beta^{k}Q_{2}^{h}\sum_{i=1}^{N}(B^{k}_{i})^{-1}Z^{(i)}Q^{(i)}\bm{u}^{k}\\
&=\bm{e}_{2}^{k}-\alpha Q_{2}^{h}(B^{k}_{0})^{-1}(Q_{1}^{H}+Q_{2}^{H})Z^{(0)}Q^{H}\bm{r}^{k}-\alpha Q_{2}^{h}\sum_{i=1}^{N}(B^{k}_{i})^{-1}Z^{(i)}Q^{(i)}\bm{r}^{k}\\
&\ \ \ -\alpha\beta^{k}Q_{2}^{h}(B^{k}_{0})^{-1}(Q_{1}^{H}+Q_{2}^{H})Z^{(0)}Q^{H}\bm{u}^{k}-\alpha\beta^{k}Q_{2}^{h}\sum_{i=1}^{N}(B^{k}_{i})^{-1}Z^{(i)}Q^{(i)}\bm{u}^{k},
\end{align*}
which, together with the definition of residual vector $\bm{r}^{k}$ and $\eqref{u1e2definition}$, yields
\begin{align}
\widetilde{\bm{e}}_{2}^{k+1}&=\{\bm{e}_{2}^{k}-\alpha Q_2^{h}(B^{k}_{0})^{-1}Q_{2}^{H}Z^{(0)}Q^{H}(A^{h}-\lambda^{k})\bm{e}_{2}^{k}-\alpha Q_2^{h}\sum_{i=1}^{N}(B^{k}_{i})^{-1}Z^{(i)}Q^{(i)}(A^{h}-\lambda^{k})\bm{e}_{2}^{k}\}\notag\\
&\ \ \ +\{\alpha Q_{2}^{h}(B_{0}^{k})^{-1}Q_{1}^{H}Z^{(0)}Q^{H}(A^{h}-\lambda^{k})\bm{u}^{k}
-\alpha\beta^{k}Q_{2}^{h}(B_{0}^{k})^{-1}Q_{1}^{H}Z^{(0)}Q^{H}\bm{u}^{k}\}\notag\\
&\ \ \ +\{\alpha Q_2^{h}(B^{k}_{0})^{-1}Q_{2}^{H}Z^{(0)}Q^{H}(A^{h}-\lambda^{k})\bm{u}_{1}^{k}+\alpha Q_2^{h}\sum_{i=1}^{N}(B^{k}_{i})^{-1}Z^{(i)}Q^{(i)}(A^{h}-\lambda^{k})\bm{u}_{1}^{k}\notag\\
&\ \ \ -\alpha\beta^{k}Q_2^{h}(B^{k}_{0})^{-1}Q_{2}^{H}Z^{(0)}Q^{H}\bm{u}^{k}-
\alpha\beta^{k}Q_2^{h}\sum_{i=1}^{N}(B^{k}_{i})^{-1}Z^{(i)}Q^{(i)}\bm{u}^{k}\}\notag\\
&:=I_{1}+I_{2}+I_{3},\label{errorsplitting}
\end{align}
here we define
$G^{k}:=I-\alpha Q_2^{h}(B^{k}_{0})^{-1}Q_{2}^{H}Z^{(0)}Q^{H}(A^{h}-\lambda^{k})-\alpha Q_2^{h}\sum_{i=1}^{N}(B^{k}_{i})^{-1}Z^{(i)}Q^{(i)}(A^{h}-\lambda^{k})$
and let $I_{1}:=G^{k}\bm{e}_{2}^{k}$. we call
$I_{1}$ as the error principal term and $G^{k}$ as the error principal operator and $I_{2}$ as the almost counterbalanced term. Noting that $a(\bm{u}^{k},\bm{u}^{k})=\lambda^{k}b(\bm{u}^{k},\bm{u}^{k})$ and using $\eqref{u1e2definition}$, it is easy to prove that
\begin{equation}\label{e2Eku1b}
||\bm{e}_{2}^{k}||^{2}_{E^{k}}=\lambda^{k}b(\bm{u}_{1}^{k},\bm{u}_{1}^{k})-a(\bm{u}_{1}^{k},\bm{u}_{1}^{k})
= (\lambda^{k}-\lambda^{h}_{1})b(\bm{u}_{1}^{k},\bm{u}_{1}^{k}),
\end{equation}
which is useful in the following error estimates.
\par Moreover, we note that $|\lambda^{k}-\lambda_{1}^{h}|\leq CH^{2}$ and $|\lambda^{k}-\lambda_{1}|\leq CH^{2}$. In fact, as the Rayleigh quotient $Rq(\bm{v})$ is a monotonic decreasing function with  expanding the Davidson subspace $W^{k}$, we have $\lambda_{1}^{h}\leq \lambda^{k}\leq \lambda^{1}$. For the case $\lambda_{1}^{H}\leq \lambda_{1}^{h}$, by corollary $\ref{priorecorollary}$ and $\eqref{lambda1lambda1H}$, we know that
\begin{equation}\label{lowerboundestimate}
|\lambda^{k}-\lambda_{1}|\leq |\lambda^{k}-\lambda_{1}^{h}|+|\lambda_{1}^{h}-\lambda_{1}|
\leq |\lambda^{1}-\lambda_{1}^{H}|+|\lambda_{1}^{h}-\lambda_{1}| \leq CH^{2}.
\end{equation}
For the case $\lambda_{1}^{H}>\lambda_{1}^{h}$, by corollary $\ref{priorecorollary}$ and $\eqref{lambda1lambda1H}$, we have
\begin{align}
|\lambda^{k}-\lambda_{1}|&\leq |\lambda^{k}-\lambda_{1}^{h}|+|\lambda_{1}^{h}-\lambda_{1}|\leq |\lambda^{1}-\lambda_{1}^{h}|+|\lambda_{1}^{h}-\lambda_{1}|\notag\\
&\leq |\lambda^{1}-\lambda_{1}^{H}|+|\lambda_{1}^{H}-\lambda_{1}^{h}|+|\lambda_{1}^{h}-\lambda_{1}|\leq CH^{2}.\label{upperboundestimate}
\end{align}
\subsection{Estimate of the error principal term $I_{1}$}
\par In this subsection, we analyze the error principal term $I_{1}$.
\begin{theorem}\label{Gkv2theorem}
For sufficiently small $\alpha$, there exists a constant $C$ such that
\begin{equation}\label{Gkv2hestimate}
||G^{k}\bm{v}_{2}^{h}||_{E^{k}}\leq (1-C\frac{\delta^{2}}{H^{2}})||\bm{v}_{2}^{h}||_{E^{k}}\ \ \ \ \ \forall \ \bm{v}_{2}^{h}\in M_{h}^{\perp}(\lambda_{1}),
\end{equation}
where the constant $C$ is independent of $H,\ \delta$ and $ h$.
\end{theorem}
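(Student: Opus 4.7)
The plan is to identify $G^{k}=I-\alpha T^{k}$, where
\[T^{k}:=Q_{2}^{h}(B_{0}^{k})^{-1}Q_{2}^{H}Z^{(0)}Q^{H}(A^{h}-\lambda^{k})+\sum_{i=1}^{N}Q_{2}^{h}(B_{i}^{k})^{-1}Z^{(i)}Q^{(i)}(A^{h}-\lambda^{k})\]
is a two-level additive Schwarz operator acting on $M_{h}^{\perp}(\lambda_{1})$ equipped with the positive definite inner product $(\cdot,\cdot)_{E^{k}}$ provided by Lemma \ref{normequivalence}. The classical abstract Schwarz framework then reduces the contraction bound to proving $(\cdot,\cdot)_{E^{k}}$-self-adjointness of $T^{k}$ together with two-sided spectral bounds $c_{1}||\bm{v}||_{E^{k}}^{2}\le (T^{k}\bm{v},\bm{v})_{E^{k}}\le c_{2}||\bm{v}||_{E^{k}}^{2}$ with $c_{1}\ge C\delta^{2}/H^{2}$ and $c_{2}$ independent of $H,h,\delta$; choosing $\alpha=1/c_{2}$ then yields the claimed reduction factor $1-C\delta^{2}/H^{2}$.

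Self-adjointness is verified by direct manipulation from the $b(\cdot,\cdot)$-symmetry of $A^{h}-\lambda^{k}$ and of each $B_{i}^{k}$ together with the fact that $Q^{H},Q^{(i)},Q_{2}^{h},Q_{2}^{H},Z^{(0)},Z^{(i)}$ are all $b(\cdot,\cdot)$-orthogonal projections. The upper bound $c_{2}$ comes from the strengthened Cauchy-Schwarz inequality of Lemma \ref{strengthenedCauchySchwarzinequality} combined with the local spectral estimates $\eqref{Bik}$ and the coloring Assumption \ref{assumption1}, producing a constant that depends only on the coloring number $N_{0}$ and the norm-equivalence constants from Lemma \ref{normequivalence}.

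The technical core is the lower bound, which by the abstract Lions-type lemma is equivalent to constructing a stable subspace decomposition: for every $\bm{v}\in M_{h}^{\perp}(\lambda_{1})$ exhibit a coarse component $\bm{v}^{(0)}$ matching the structure of the coarse term in $T^{k}$ and local pieces $\bm{v}^{(i)}\in V_{0}^{(i)}$ with $\bm{v}=\bm{v}^{(0)}+\sum_{i}\bm{v}^{(i)}$ and
\[||\bm{v}^{(0)}||_{E^{k}}^{2}+\sum_{i=1}^{N}||\bm{v}^{(i)}||_{E^{k}}^{2}\le C\frac{H^{2}}{\delta^{2}}||\bm{v}||_{E^{k}}^{2}.\]
I would build the coarse piece via the operator $P^{h}$ (using Lemma \ref{lemmaPh} to absorb the $h$-dependence), followed by the projections $Q_{2}^{H}Z^{(0)}$; and the local pieces via the partition of unity $\{\theta_{i}\}$ in $\eqref{unitypartition}$, an edge-element nodal interpolation of $\theta_{i}(\bm{v}-\bm{v}^{(0)})$, and a local discrete Helmholtz correction enforcing $\bm{v}^{(i)}\in V_{0}^{(i)}$. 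The crucial $\delta^{-2}$ factor enters through the standard estimate $||\nabla\theta_{i}||_{0,\infty,\Omega_{i}'}\le C/\delta$ when bounding $\bm{curl}(\theta_{i}\bm{w})$ on the overlap region.

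The principal obstacle is the non-nestedness $E_{H}^{0}(\Omega_{H};\epsilon_{r})\not\subset E_{h}^{0}(\Omega_{h};\epsilon_{r})$ together with the eigenspace projections $Q_{2}^{H},Q_{2}^{h}$ appearing in $T^{k}$: the decomposition will produce cross terms between the $M_{H}(\lambda_{1})$- and $M_{h}(\lambda_{1})$-components of order $O(H)$ which must be absorbed using Lemma \ref{LemmaK0hQ2H}. Since these terms are of lower order than the principal $\delta/H$ factor, they are swallowed provided $H$ is sufficiently small, which is consistent with the framework already set up in Section 4. The potential near-singularity of $A^{h}-\lambda^{k}$ as $\lambda^{k}\to\lambda_{1}^{h}$ causes no difficulty because the restriction to $M_{h}^{\perp}(\lambda_{1})$ guarantees via Lemma \ref{normequivalence} that $(\cdot,\cdot)_{E^{k}}$ remains uniformly equivalent to the $a(\cdot,\cdot)$-inner product.
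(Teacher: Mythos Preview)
Your proposal is correct and follows essentially the same route as the paper's proof: the paper organizes exactly this argument into Lemma~\ref{Gksympos} (symmetry of $G^{k}$ in $(\cdot,\cdot)_{E^{k}}$, positive definiteness for small $\alpha$, and the upper spectral bound via the projection identities $\|T_{i}^{k}\bm{v}\|_{E^{k}}^{2}=(T_{i}^{k}\bm{v},\bm{v})_{E^{k}}$ together with the strengthened Cauchy--Schwarz inequality) and Lemma~\ref{Gkstabledecomposition} (the stable decomposition with coarse piece $Q_{2}^{h}Q_{2}^{H}Z^{(0)}Q^{H}P^{h}\bm{v}_{2}^{h}$ and local pieces $Z^{(i)}r_{h}\theta_{i}(\bm{v}_{2}^{h}-\bm{w}_{0})$), then combines them and passes from the quadratic form bound to the norm bound via $(G^{k})^{1/2}$. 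Two small remarks: the upper bound in the paper does not actually invoke $\eqref{Bik}$ but comes directly from the projection property of the $T_{i}^{k}$; and the decomposition must read $\bm{v}=\bm{w}_{0}+\sum_{i}Q_{2}^{h}\bm{w}^{(i)}$ rather than $\bm{v}=\bm{v}^{(0)}+\sum_{i}\bm{v}^{(i)}$, since $V_{0}^{(i)}\not\subset M_{h}^{\perp}(\lambda_{1})$ --- the extra $Q_{2}^{h}$ is harmless precisely because it is both $a$- and $b$-orthogonal, hence $(\cdot,\cdot)_{E^{k}}$-orthogonal.
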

\par First of all, we give two useful lemmas.
\begin{lemma}\label{Gksympos}
The error principal operator $G^{k}:M_{h}^{\perp}(\lambda_{1})\to M_{h}^{\perp}(\lambda_{1})$ is symmetric in the sense of $(\cdot,\cdot)_{E^{k}}$. Furthermore, if $\alpha$ is sufficiently small, the operator $G^{k}:M_{h}^{\perp}(\lambda_{1})\to M_{h}^{\perp}(\lambda_{1})$ is also positive definite.
\end{lemma}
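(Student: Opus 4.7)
The plan is to identify the operator $G^k$ as an additive Schwarz error-propagation operator built from $\widetilde{a}^k$-orthogonal projections, where $\widetilde{a}^k(\cdot,\cdot):=a(\cdot,\cdot)-\lambda^k b(\cdot,\cdot)$ coincides with $(\cdot,\cdot)_{E^k}$ on $M_h^\perp(\lambda_1)$. Once $G^k$ is written in that form, symmetry becomes automatic, and positive definiteness for small $\alpha$ follows from the finite-overlap property of Assumption~\ref{assumption1}.

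First I would show that, for $i\geq 1$, the composition $(B_i^k)^{-1}Z^{(i)}Q^{(i)}(A^h-\lambda^k)$ equals the $\widetilde{a}^k$-orthogonal projection $P_i^k$ onto $V_0^{(i)}$. The observation driving this is that $B_i^k=(A^h-\lambda^k)|_{V^{(i)}}$ is $b$-symmetric and, because $a(\nabla p,\cdot)=0$ on any gradient in $\nabla S_h^{(i)}\subset V^{(i)}$ and $V_0^{(i)}\perp_b\nabla S_h^{(i)}$, it preserves the splitting $V^{(i)}=\nabla S_h^{(i)}\oplus V_0^{(i)}$; hence $(B_i^k)^{-1}$ maps $V_0^{(i)}$ into itself. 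Using $Z^{(i)}\phi=\phi$ for $\phi\in V_0^{(i)}$ and that $Q^{(i)}$ is a $b$-projection, testing the defining identity of $(B_i^k)^{-1}$ with $\phi\in V_0^{(i)}$ yields $\widetilde{a}^k(P_i^k u,\phi)=\widetilde{a}^k(u,\phi)$ for all $\phi\in V_0^{(i)}$, which characterizes the $\widetilde{a}^k$-projection onto $V_0^{(i)}$ (this requires $\widetilde{a}^k$ to be positive definite on $V_0^{(i)}$, which holds since $\lambda^k<\lambda_{\min}(A^h|_{V_0^{(i)}})$ by \eqref{Bik}). An analogous computation on the coarse level identifies $(B_0^k)^{-1}Q_2^H Z^{(0)} Q^H(A^h-\lambda^k)$ with the $\widetilde{a}^k$-projection $P_0^k$ onto $M_H^\perp(\lambda_1)$; well-posedness uses $\lambda^k<\lambda_2^H$, which follows from Lemma~\ref{Lemmalambda1lambda1H} and the monotonicity of the Rayleigh quotient for sufficiently small $H$ (possibly after the initial-grid refinement noted in Remark~\ref{Hs}).

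The next step is to remove the outer $Q_2^h$ in the computation of $(G^ku,v)_{E^k}$ when $u,v\in M_h^\perp(\lambda_1)$. Because the decomposition $E_h(\Omega_h)=\nabla S_h(\Omega_h)\oplus M_h(\lambda_1)\oplus M_h^\perp(\lambda_1)$ is simultaneously $a$- and $b$-orthogonal, $\widetilde{a}^k(x,v)=\widetilde{a}^k(Q_2^h x,v)$ for every $x\in E_h(\Omega_h)$ and every $v\in M_h^\perp(\lambda_1)$. Consequently $(\widetilde{P}_i u,v)_{E^k}=\widetilde{a}^k(P_i^k u,v)$ for $i=0,1,\ldots,N$, and the $\widetilde{a}^k$-self-adjointness of the genuine projections $P_i^k$ (obtained by plugging $\phi=P_i^k v$ into the characterizing identity for $P_i^k u$ and symmetrizing) gives $\widetilde{a}^k(P_i^k u,v)=\widetilde{a}^k(u,P_i^k v)=(u,\widetilde{P}_i v)_{E^k}$. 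Summing over $i$ yields the $(\cdot,\cdot)_{E^k}$-symmetry of $G^k$ on $M_h^\perp(\lambda_1)$.

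For positive definiteness, idempotence and $\widetilde{a}^k$-symmetry of $P_i^k$ give $(\widetilde{P}_i u,u)_{E^k}=||P_i^k u||_{E^k}^2\geq 0$, so it suffices to produce a constant $C_0=C_0(N_0)$, independent of $h,H,\delta$, with $\sum_{i=0}^N(\widetilde{P}_i u,u)_{E^k}\leq C_0||u||_{E^k}^2$. The local sum is handled by the strengthened Cauchy–Schwarz inequality of Lemma~\ref{strengthenedCauchySchwarzinequality} applied to the $a-\lambda^k b$ bilinear form (using Lemma~\ref{normequivalence} to pass between $||\cdot||_a$ and $||\cdot||_{E^k}$), while the single coarse term is bounded using stability of $(B_0^k)^{-1}Q_2^H Z^{(0)}Q^H$ on the coarse eigenspace complement. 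Choosing $0<\alpha<1/C_0$ then gives $(G^k u,u)_{E^k}\geq(1-\alpha C_0)||u||_{E^k}^2>0$. The main obstacle I expect is the bookkeeping in the first paragraph: one has to chase through $Q_2^h, Q_2^H, Z^{(0)}, Z^{(i)}, Q^H, Q^{(i)}$ and verify the positivity of $\widetilde{a}^k$ on each local and coarse subspace to legitimately replace the chain of operators by a single $\widetilde{a}^k$-projection. After that identification, symmetry and positive definiteness are essentially routine.
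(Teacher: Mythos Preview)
Your proposal is correct and matches the paper's proof almost exactly: the paper defines the very same $\widetilde a^k$-projections (called $T_0^k$ and $T_i^k$ there, introduced through the variational identities \eqref{T0k}--\eqref{Tik}), verifies the operator identities $T_0^k=(B_0^k)^{-1}Q_2^H Z^{(0)}Q^H(A^h-\lambda^k)$ and $T_i^k=(B_i^k)^{-1}Z^{(i)}Q^{(i)}(A^h-\lambda^k)$, drops the outer $Q_2^h$ when testing against $M_h^\perp(\lambda_1)$, and obtains $(G^k v,v)_{E^k}=\|v\|_{E^k}^2-\alpha\|T_0^k v\|_{E^k}^2-\alpha\sum_i\|T_i^k v\|_{E^k}^2$; positive definiteness then follows from bounding the coarse term directly and the local sum via Lemma~\ref{strengthenedCauchySchwarzinequality}, exactly as you outline. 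The only cosmetic difference is that the paper proves symmetry directly from the $b$-symmetry of $(B_i^k)^{-1}$ rather than via the projection identification, but the content is the same.
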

\begin{proof}
It is easy to see that $(B_{0}^{k})^{-1}$ and $(B_{i}^{k})^{-1},\ (i=1,2,...,N)$ are symmetric in the sense of $b(\cdot,\cdot)$, which result in
\begin{equation}\label{pricipalcoarse}
(Q_2^{h}(B^{k}_{0})^{-1}Q_{2}^{H}Z^{(0)}Q^{H}(A^{h}-\lambda^{k})\bm{v}_{2}^{h},\bm{w}_{2}^{h})_{E^{k}}=
(\bm{v}_{2}^{h},Q_2^{h}(B^{k}_{0})^{-1}Q_{2}^{H}Z^{(0)}Q^{H}(A^{h}-\lambda^{k})\bm{w}_{2}^{h})_{E^{k}},
\end{equation}
and
\begin{equation}\label{pricipalfine}
(Q_2^{h}\sum_{i=1}^{N}(B^{k}_{i})^{-1}Z^{(i)}Q^{(i)}(A^{h}-\lambda^{k})\bm{v}_{2}^{h},\bm{w}_{2}^{h})_{E^{k}}=
(\bm{v}_{2}^{h},Q_2^{h}\sum_{i=1}^{N}(B^{k}_{i})^{-1}Z^{(i)}Q^{(i)}(A^{h}-\lambda^{k})\bm{w}_{2}^{h})_{E^{k}},
\end{equation}
here $\bm{v}_{2}^{h},\bm{w}_{2}^{h}\in M_{h}^{\perp}(\lambda_{1})$.
Combining $\eqref{pricipalcoarse}$, $\eqref{pricipalfine}$ with the definition of $G^{k}$, we obtain
\begin{equation}\label{Gkequ}
(G^{k}\bm{v}_{2}^{k},\bm{w}_{2}^{k})_{E^{k}}=(\bm{v}_{2}^{k},G^{k}\bm{w}_{2}^{k})_{E^{k}},
\end{equation}
which means that $G^{k}$ is symmetric in the sense of $(\cdot,\cdot)_{E^{k}}$.
\par To obtain the second conclusion in this lemma, we define an operator
$T_{0}^{k}:M_{h}^{\perp}(\lambda_{1})\to M_{H}^{\perp}(\lambda_{1})$ as follows:
\begin{equation}\label{T0k}
(T_{0}^{k}\bm{v}_{2}^{h},\bm{v}_{2}^{H})_{E^{k}}=(\bm{v}_{2}^{h},\bm{v}_{2}^{H})_{E^{k}}\ \ \ \ \ \forall\ \bm{v}_{2}^{H}\in M_{H}^{\perp}(\lambda_{1}),
\end{equation}
where $\bm{v}_{2}^{h}\in M^{\perp}_{h}(\lambda_{1})$. For sufficiently small $H$, we know that $\lambda^{k}<\lambda_{2}^{H}$. By the Lax-Milgram theorem in $M_{H}^{\perp}(\lambda_{1})$, we may see that the above definition is meaningful.
Similarly, we may also define operator $T_{i}^{k}:M_{h}^{\perp}(\lambda_{1})\to V_{0}^{(i)}$ $(i=1, 2, ...,N)$ as follows:
\begin{equation}\label{Tik}
(T_{i}^{k}\bm{v}_{2}^{h},\bm{v}_{0}^{(i)})_{E^{k}}=(\bm{v}_{2}^{h},\bm{v}_{0}^{(i)})_{E^{k}}\ \ \ \ \ \forall\ \bm{v}_{0}^{(i)}\in V_{0}^{(i)},
\end{equation}
where $\bm{v}_{2}^{h}\in M_{h}^{\perp}(\lambda_{1})$. Because the Poincar$\acute{e}$ inequality holds in local fine spaces $V_{0}^{(i)},\ \ i=1,2,...,N$, the above definitions are also meaningful.
\par It is easy to check that $T_{0}^{k}=(B^{k}_{0})^{-1}Q_{2}^{H}Z^{(0)}Q^{H}(A^{h}-\lambda^{k})$. In fact, for any $\bm{v}_{2}^{h},\bm{v}_{2}^{H}$, we have
\begin{equation}\notag
b(B^{k}_{0}T_{0}^{k}\bm{v}_{2}^{h},\bm{v}_{2}^{H})=b((A^{H}-\lambda^{k})T_{0}^{k}\bm{v}_{2}^{h},\bm{v}_{2}^{H})
=(T_{0}^{k}\bm{v}_{2}^{h},\bm{v}_{2}^{H})_{E^{k}}
=(\bm{v}_{2}^{h},\bm{v}_{2}^{H})_{E^{k}}
\end{equation}
\begin{equation}\notag
=b((A^{h}-\lambda^{k})\bm{v}_{2}^{h},\bm{v}_{2}^{H})\\
=b(Q_{2}^{H}Z^{(0)}Q^{H}(A^{h}-\lambda^{k})\bm{v}_{2}^{h},\bm{v}_{2}^{H}).
\end{equation}
Similarly, $T_{i}^{k}=(B^{k}_{i})^{-1}Z^{(i)}Q^{(i)}(A^{h}-\lambda^{k})$.
\par Hence, for any $\bm{v}_{2}^{h}\in M^{\perp}(\lambda_{1})$, we have
\begin{align}
(G^{k}\bm{v}_{2}^{h},\bm{v}_{2}^{h})_{E^{k}}
&=(\bm{v}_{2}^{h},\bm{v}_{2}^{h})_{E^{k}}-\alpha(Q_2^{h}T_{0}^{k}\bm{v}_{2}^{h},\bm{v}_{2}^{h})_{E^{k}}-
\alpha(Q_2^{h}\sum_{i=1}^{N}T_{i}^{k}\bm{v}_{2}^{h},\bm{v}_{2}^{h})_{E^{k}}\notag \\
&=||\bm{v}_{2}^{h}||^{2}_{E^{k}}-\alpha(T^{k}_{0}\bm{v}_{2}^{h},\bm{v}_{2}^{h})_{E^{k}}-
\alpha\sum_{i=1}^{N}(T_{i}^{k}\bm{v}_{2}^{h},\bm{v}_{2}^{h})_{E^{k}}\notag \\
&=||\bm{v}_{2}^{h}||^{2}_{E^{k}}-\alpha||T^{k}_{0}\bm{v}_{2}^{h}||^{2}_{E^{k}}
-\alpha\sum_{i=1}^{N}||T_{i}^{k}\bm{v}_{2}^{h}||^{2}_{E^{k}}.\label{Gkpri}
\end{align}
For the second term of $\eqref{Gkpri}$, owing to the definition of $Q_{2}^{h}$ and Cauchy-Schwarz inequality, we have
\begin{equation}\label{Tok1}
||T^{k}_{0}\bm{v}_{2}^{h}||^{2}_{E^{k}}=(T^{k}_{0}\bm{v}_{2}^{h},\bm{v}_{2}^{h})_{E^{k}}
=(Q_{2}^{h}T^{k}_{0}\bm{v}_{2}^{h},\bm{v}_{2}^{h})_{E^{k}}
\leq ||Q_{2}^{h}T^{k}_{0}\bm{v}_{2}^{h}||_{E^{k}}||\bm{v}_{2}^{h}||_{E^{k}}.
\end{equation}
Meanwhile,
\begin{align}
||Q_{2}^{h}T^{k}_{0}\bm{v}_{2}^{h}||^{2}_{E^{k}}&=||T^{k}_{0}\bm{v}_{2}^{h}||^{2}_{E^{k}}+
(\lambda^{k}-\lambda_{1}^{h})||Q_{1}^{h}T^{k}_{0}\bm{v}_{2}^{h}||^{2}_{b}+\lambda^{k}||K_{0}^{h}T^{k}_{0}\bm{v}_{2}^{h}||^{2}_{b}\notag\\
&\leq ||T^{k}_{0}\bm{v}_{2}^{h}||^{2}_{E^{k}}+(\lambda^{k}-\lambda_{1}^{h})||T^{k}_{0}\bm{v}_{2}^{h}||^{2}_{b}+
\lambda^{k}||T^{k}_{0}\bm{v}_{2}^{h}||^{2}_{b}\notag\\
&\leq \{1+C\lambda_{1}+CH^{2}\} ||T^{k}_{0}\bm{v}_{2}^{h}||^{2}_{E^{k}}\label{T0k2}.
\end{align}
Combining $\eqref{Tok1}$ and $\eqref{T0k2}$ together, we have
\begin{equation}\label{Gkcoarse}
||T^{k}_{0}\bm{v}_{2}^{h}||^{2}_{E^{k}}\leq (1+C\lambda_{1}+CH^{2}) ||\bm{v}_{2}^{h}||^{2}_{E^{k}}.
\end{equation}
For the third term of $\eqref{Gkpri}$, we have
\begin{equation}\label{Tik1}
\sum_{i=1}^{N}||T_{i}^{k}\bm{v}_{2}^{h}||^{2}_{E^{k}}=\sum_{i=1}^{N}(T_{i}^{k}\bm{v}_{2}^{h},\bm{v}_{2}^{h})_{E^{k}}
\leq ||Q_{2}^{h}\sum_{i=1}^{N}T_{i}^{k}\bm{v}_{2}^{h}||_{E^{k}}||\bm{v}_{2}^{h}||_{E^{k}}.
\end{equation}
Owing to $\eqref{discretespectralspaces}$, Lemma $\ref{strengthenedCauchySchwarzinequality}$, Lemma $\ref{normequivalence}$ and the Poincar$\acute{e}$ inequality, we get
\begin{align}
&\ \ \ \ ||Q_{2}^{h}\sum_{i=1}^{N}T_{i}^{k}\bm{v}_{2}^{h}||^{2}_{E^{k}}\leq C||Q_{2}^{h}\sum_{i=1}^{N}T_{i}^{k}\bm{v}_{2}^{h}||^{2}_{E^{h}}
\leq C||\sum_{i=1}^{N}T_{i}^{k}\bm{v}_{2}^{h}||^{2}_{a}+
C\lambda_{1}||\sum_{i=1}^{N}T_{i}^{k}\bm{v}_{2}^{h}||^{2}_{b}\notag\\
&=C\sum_{i=1}^{N}\sum_{l=1}^{N}a(T_{i}^{k}\bm{v}_{2}^{h},T_{l}^{k}\bm{v}_{2}^{h})+
C\lambda_{1}\sum_{i=1}^{N}\sum_{l=1}^{N}b(T_{i}^{k}\bm{v}_{2}^{h},T_{l}^{k}\bm{v}_{2}^{h})\notag\\
&\leq CN_{0}\sum_{i=1}^{N}||T_{i}^{k}\bm{v}_{2}^{h}||_{a}^{2}+CN_{0}\lambda_{1}\sum_{i=1}^{N}||T_{i}^{k}\bm{v}_{2}^{h}||_{b}^{2}\leq CN_{0}(1+\lambda_{1}H^{2})\sum_{i=1}^{N}||T_{i}^{k}\bm{v}_{2}^{h}||_{E^{k}}^{2}\label{Tik2}.
\end{align}
Combining $\eqref{Tik1}$ and $\eqref{Tik2}$ together, we get
\begin{equation}\label{Gkifinelocal}
\sum_{i=1}^{N}||T_{i}^{k}\bm{v}_{2}^{h}||^{2}_{E^{k}}\leq CN_{0}(1+\lambda_{1}H^{2})||\bm{v}_{2}^{h}||^{2}_{E^{k}},
\end{equation}
which, together with $\eqref{Gkpri}$ $\eqref{Gkcoarse}$, yields
\begin{align*}
(G^{k}\bm{v}_{2}^{h},\bm{v}_{2}^{h})_{E^{k}}&=||\bm{v}_{2}^{h}||^{2}_{E^{k}}-\alpha(T^{k}_{0}\bm{v}_{2}^{h},\bm{v}_{2}^{h})_{E^{k}}-
\alpha\sum_{i=1}^{N}||T_{i}^{k}\bm{v}_{2}^{h}||^{2}_{E^{k}}\\
&\geq \{1-\alpha(1+CN_{0}+C\lambda_{1}+CH^{2})\}||\bm{v}_{2}^{h}||^{2}_{E^{k}},
\end{align*}
where $ C=O(1),\ \lambda_{1}=O(1),\ N_{0}=O(1)$. If we take $0<\alpha<\alpha_{0}=\frac{1}{1+CN_{0}+C\lambda_{1}+CH^{2}}$, then we may obtain the conclusion of this lemma.  \qed
\end{proof}
\begin{lemma}\label{Gkstabledecomposition}
For the error space $M_{h}^{\perp}(\lambda_{1})$, it holds that
\[M_{h}^{\perp}(\lambda_{1})=Q_{2}^{h}M_{H}^{\perp}(\lambda_{1})+\sum_{i=1}^{N}Q_{2}^{h}V_{0}^{(i)}\]
and there exist $\bm{w}_{0}\in Q_{2}^{h}M_{H}^{\perp}(\lambda_{1})$ and $\bm{w}_{0}^{(i)}\in V_{0}^{(i)}$ such that
\begin{equation}\label{Decomposition}
(\bm{w}_{0},\bm{w}_{0})_{E^{k}}+\sum_{i=1}^{N}(\bm{w}_{0}^{(i)},\bm{w}_{0}^{(i)})_{E^{k}}\leq CN_{0}(1+\frac{H^{2}}{\delta^{2}})(\bm{v}_{2}^{h},\bm{v}_{2}^{h})_{E^{k}}.
\end{equation}
\end{lemma}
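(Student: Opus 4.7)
The plan is to construct a stable two-level additive Schwarz type decomposition of $\bm{v}_2^h \in M_h^\perp(\lambda_1)$ following the Toselli--Widlund strategy for $\bm{H}(\bm{curl})$, adapted so as to respect both the discrete divergence-free constraint and the $b(\cdot,\cdot)$-orthogonality to the first eigenvector $\bm{u}_1^h$. The coarse component will come from a N\'ed\'elec-type quasi-interpolation, the fine components from a partition-of-unity localization, and the $Q_2^h$ projection at the end returns everything into $M_h^\perp(\lambda_1)$.

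I would first fix a N\'ed\'elec quasi-interpolation $\Pi^H : E_h(\Om_h) \to E_H(\Om_H)$ satisfying the commuting diagram $\bm{curl}\,\Pi^H = R_H^{RT}\,\bm{curl}$ so that $\Pi^H$ preserves discrete divergence-freeness on the coarse level, together with the standard bounds
\begin{equation*}
\|\bm{v}_2^h - \Pi^H \bm{v}_2^h\|_b \leq C H \|\bm{v}_2^h\|_a, \qquad \|\Pi^H \bm{v}_2^h\|_a \leq C \|\bm{v}_2^h\|_a.
\end{equation*}
Setting $\bm{v}^H := \Pi^H \bm{v}_2^h \in E_H^0(\Om_H;\epsilon_r)$, define the coarse piece
\begin{equation*}
\bm{w}_0 := Q_2^h Q_2^H \bm{v}^H \in Q_2^h M_H^\perp(\lambda_1).
\end{equation*}
By Lemma~\ref{LemmaK0hQ2H} the discarded gradient part $K_0^H \bm{v}^H$ and the $M_H(\lambda_1)$-component $Q_1^H \bm{v}^H$ become higher-order after being passed through $Q_2^h$, so that $\bm{w}_0$ differs from $Q_2^h \bm{v}^H$ only by $O(H)\|\bm{v}_2^h\|_a$.

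For the fine residual $\bm{z} := \bm{v}_2^h - \bm{v}^H$, I would employ the partition of unity $\{\theta_i\}$ from \eqref{unitypartition} and the fine-level N\'ed\'elec interpolant $I^h$ to put
\begin{equation*}
\bm{w}^{(i)} := I^h(\theta_i \bm{z}) \in V^{(i)}, \qquad \bm{w}_0^{(i)} := Z^{(i)} \bm{w}^{(i)} \in V_0^{(i)}.
\end{equation*}
Patch stability of $I^h$ combined with $\|\nabla \theta_i\|_{0,\infty,\Om_i'} \leq C/\delta$ yields the local estimate
\begin{equation*}
\|\bm{w}^{(i)}\|_a^2 \leq C \|\bm{z}\|_{a,\Om_i'}^2 + C \delta^{-2} \|\bm{z}\|_{0,\Om_i'}^2.
\end{equation*}
Summing over $i$, invoking the finite-overlap constant $N_0$ from Assumption~\ref{assumption1} and the coarse approximation bound $\|\bm{z}\|_b \leq C H \|\bm{v}_2^h\|_a$, I obtain $\sum_{i=1}^N \|\bm{w}^{(i)}\|_a^2 \leq C N_0 (1 + H^2/\delta^2) \|\bm{v}_2^h\|_a^2$. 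Since $\sum_i \bm{w}^{(i)} = I^h \bm{z} = \bm{z}$ and each $K^{(i)} \bm{w}^{(i)} \in \nabla S_h^{(i)} \subset \nabla S_h(\Om_h)$ is annihilated by $Q_2^h$, the identity $\bm{v}_2^h = Q_2^h \bm{v}_2^h = \bm{w}_0 + \sum_i Q_2^h \bm{w}_0^{(i)}$ then holds, modulo the coarse cross-terms that are controlled by Lemma~\ref{LemmaK0hQ2H}. Finally, converting from $\|\cdot\|_a$ to $(\cdot,\cdot)_{E^k}$ by Lemma~\ref{normequivalence} delivers the bound \eqref{Decomposition}.

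The main obstacle is the simultaneous interaction of three non-cooperating features: the non-nested discrete divergence-free spaces ($E_H^0(\Om_H;\epsilon_r) \not\subset E_h^0(\Om_h;\epsilon_r)$), the $b$-orthogonality to $M_h(\lambda_1)$, and the fact that multiplication by the Lipschitz cut-off $\theta_i$ destroys both discrete edge-element structure and divergence-freeness. Overcoming these requires the commuting-diagram quasi-interpolation on the coarse side, the local Helmholtz correction $Z^{(i)}$ on the fine side, and Lemma~\ref{LemmaK0hQ2H} to handle the cross terms $Q_2^h Q_1^H$ and $K_0^h Q_1^H$ that would otherwise spoil the desired $(1+H^2/\delta^2)$ scaling.
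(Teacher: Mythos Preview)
Your construction has a genuine gap: the decomposition you write down is \emph{not exact}. With $\bm{z}=\bm{v}_2^h-\Pi^H\bm{v}_2^h$ and $\bm{w}_0^{(i)}=Z^{(i)}I^h(\theta_i\bm{z})$, summing and applying $Q_2^h$ gives
\[
\bm{w}_0+\sum_{i=1}^N Q_2^h\bm{w}_0^{(i)}
=Q_2^hQ_2^H\Pi^H\bm{v}_2^h+Q_2^h(\bm{v}_2^h-\Pi^H\bm{v}_2^h)
=\bm{v}_2^h-Q_2^hQ_1^H\Pi^H\bm{v}_2^h,
\]
and the remainder $Q_2^hQ_1^H\Pi^H\bm{v}_2^h$ is $O(H)\|\bm{v}_2^h\|_a$ by Lemma~\ref{LemmaK0hQ2H} but is not zero. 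Saying the identity holds ``modulo the coarse cross-terms'' does not rescue the lemma as stated, and more importantly it breaks the use of the lemma in the proof of Theorem~\ref{Gkv2theorem}, which expands $(\bm{v}_2^h,\bm{v}_2^h)_{E^k}$ along an \emph{exact} splitting before applying Cauchy--Schwarz against $T_0^k,\dots,T_N^k$.

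The paper avoids this by localising not the residual $\bm{v}_2^h-\Pi^H\bm{v}_2^h$ but the residual $\bm{v}_2^h-\bm{w}_0$ itself: it sets $\bm{w}_0^{(i)}=Z^{(i)}r_h\theta_i(\bm{v}_2^h-\bm{w}_0)$, so that exactness $\bm{w}_0+\sum_iQ_2^h\bm{w}_0^{(i)}=\bm{v}_2^h$ is automatic. The price is that one must then bound $\|\bm{v}_2^h-\bm{w}_0\|_b$ directly, and the paper's coarse component is $\bm{w}_0=Q_2^hQ_2^HZ^{(0)}Q^HP^h\bm{v}_2^h$, built from the $b$-projection $Q^H$ composed with the operator $P^h$ of \eqref{definitionofPh} rather than a commuting quasi-interpolation. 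The point of $P^h$ is that $P^h\bm{v}_2^h$ lies in the continuous divergence-free space and hence in $H^1(\Omega)^3$ by the embedding, so Lemma~\ref{QH} applies; the estimate $\|\bm{v}_2^h-\bm{w}_0\|_b\le CH\|\bm{v}_2^h\|_a$ then requires peeling off $K_0^H$ and $Q_1^H$ layers via orthogonality arguments (equations \eqref{localcomponentfirst1}--\eqref{localcomponentfirst4}). Your commuting $\Pi^H$ would be a legitimate alternative for the coarse piece, but you would still need to localise $\bm{v}_2^h-\bm{w}_0$, not $\bm{v}_2^h-\Pi^H\bm{v}_2^h$, and then prove the corresponding $O(H)$ bound for $\|\bm{v}_2^h-\bm{w}_0\|_b$. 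A minor additional point: the commuting relation $\bm{curl}\,\Pi^H=R_H^{RT}\bm{curl}$ does not by itself force $\Pi^H\bm{v}_2^h\in E_H^0(\Omega_H;\epsilon_r)$; you need a cochain/Fortin property ensuring $b(\Pi^H\bm{v}_2^h,\nabla p_H)=0$, which should be stated explicitly.
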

\begin{proof}
For any $\bm{v}_{2}^{h}\in M_{h}^{\perp}(\lambda_{1})$, we take $\bm{w}_{0}=Q_{2}^{h}Q_{2}^{H}Z^{(0)}Q^{H}P^{h}\bm{v}_{2}^{h}$ and $ \bm{w}_{0}^{(i)}=Z^{(i)}r_{h}\theta_{i}(\bm{v}_{2}^{h}-\bm{w}_{0})$, where $r_{h}$ is the standard edge element interpolation operator. Then
\begin{align}
\bm{w}_{0}+\sum_{i=1}^{N}Q_{2}^{h}\bm{w}_{0}^{(i)}&=\bm{w}_{0}+\sum_{i=1}^{N}Q_{2}^{h}Z^{(i)}r_{h}\theta_{i}(\bm{v}_{2}^{h}-\bm{w}_{0})\notag\\
&=\bm{w}_{0}+\sum_{i=1}^{N}Q_{2}^{h}(I-K^{(i)})r_{h}\theta_{i}(\bm{v}_{2}^{h}-\bm{w}_{0})\label{decomposition1},
\end{align}
which, together with the fact that for any $\bm{v}_{h}\in E_{h}(\Omega_{h}),\ K^{(i)}\bm{v}_{h}\in \nabla{S_{h}(\Omega_{h})}$, yields
\begin{align*}
&\ \ \ \ \bm{w}_{0}+\sum_{i=1}^{N}Q_{2}^{h}\bm{w}_{0}^{(i)}=\bm{w}_{0}+\sum_{i=1}^{N}Q_{2}^{h}r_{h}\theta_{i}(\bm{v}_{2}^{h}-\bm{w}_{0})\\
&=\bm{w}_{0}+Q_{2}^{h}r_{h}(\sum_{i=1}^{N}\theta_{i})(\bm{v}_{2}^{h}-\bm{w}_{0})=\bm{w}_{0}+Q_{2}^{h}r_{h}(\bm{v}_{2}^{h}-\bm{w}_{0})=\bm{v}_{2}^{h}.
\end{align*}
\par Next, we prove $\eqref{Decomposition}$. For the component in the coarse space, owing to Lemma $\ref{normequivalence}$, the orthogonality on $E_{h}(\Omega_{h})$ and $E_{H}(\Omega_{H})$ in the sense of $a(\cdot,\cdot)$ and $b(\cdot,\cdot)$, the definition of $Z^{(0)}$ and the Poincar$\acute{e}$ inequality, we get
\begin{align}
||\bm{w}_{0}||^{2}_{E^{k}}&\leq C||Q_{2}^{h}Q_{2}^{H}Z^{(0)}Q^{H}P^{h}\bm{v}_{2}^{h}||^{2}_{E^{h}}\notag\\
&\leq C\{||Q_{2}^{H}Z^{(0)}Q^{H}P^{h}\bm{v}_{2}^{h}||^{2}_{E^{h}}+
\lambda_{1}^{h}||K_{0}^{h}Q_{2}^{H}Z^{(0)}Q^{H}P^{h}\bm{v}_{2}^{h}||^{2}_{b}\}\notag\\
&\leq C||Z^{(0)}Q^{H}P^{h}\bm{v}_{2}^{h}||^{2}_{a}+C||Z^{(0)}Q^{H}P^{h}\bm{v}_{2}^{h}||_{b}^{2}
+C\lambda_{1}||K_{0}^{h}Q_{2}^{H}Z^{(0)}Q^{H}P^{h}\bm{v}_{2}^{h}||^{2}_{b}\notag\\
&\leq C||Z^{(0)}Q^{H}P^{h}\bm{v}_{2}^{h}||^{2}_{a}+C(1+\lambda_{1})||Z^{(0)}Q^{H}P^{h}\bm{v}_{2}^{h}||_{b}^{2}\notag\\
&\leq C||Z^{(0)}Q^{H}P^{h}\bm{v}_{2}^{h}||^{2}_{a}\leq C||\bm{curl}Q^{H}P^{h}\bm{v}_{2}^{h}||^{2}_{b}\label{Coarsecomponentestimate}.
\end{align}
 Using Lemma $\ref{QH}$, the definition of $P^{h}$ and the embedding theorem $\bm{H}_{0}(\bm{curl};\Omega)\cap \bm{H}(div_{0};\Omega;\epsilon_{r})\hookrightarrow H^{1}(\Omega)^{3}$, we get
\begin{equation}\label{QHPh}
||\bm{curl}Q^{H}P^{h}\bm{v}_{2}^{h}||^{2}_{b}\leq C|P^{h}\bm{v}_{2}^{h}|^{2}_{1}\leq C||\bm{curl} P^{h}\bm{v}_{2}^{h}||^{2}_{0}
\leq C||\bm{curl}\ \bm{v}_{2}^{h}||^{2}_{0}\leq C||\bm{v}_{2}^{h}||^{2}_{a}\leq C||\bm{v}_{2}^{h}||^{2}_{E^{k}}.
\end{equation}
Then
\begin{equation}\label{coarsedecom}
||\bm{w}_{0}||^{2}_{E^{k}}\leq C ||\bm{v}_{2}^{h}||^{2}_{E^{k}}.
\end{equation}
\par For the local fine component, we may use the properties of the  partition of unity $\eqref{unitypartition}$ and the property of the interpolation operator $r_{h}$ to prove that
\begin{align}
&\ \ \ \ \ \sum_{i=1}^{N}||Z^{(i)}r_{h}\theta_{i}(\bm{v}_{2}^{h}-\bm{w}_{0})||^{2}_{a}
\leq C\sum_{i=1}^{N}||\bm{curl}\ r_{h} \theta_{i}(\bm{v}_{2}^{h}-\bm{w}_{0})||^{2}_{b,\Omega^{'}_{i}}\notag\\
&\leq CN_{0}\{\frac{1}{\delta^{2}}||\bm{v}_{2}^{h}-\bm{w}_{0}||^{2}_{b}+ ||\bm{curl}\ (\bm{v}_{2}^{h}-\bm{w}_{0})||^{2}_{b}\},\label{localcomponent}
\end{align}
where the last inequality holds (see \cite{Toselli} or Lemma 10.9 in \cite{ToselliM}).
We first estimate the first term of $\eqref{localcomponent}$. Combining the triangle inequality, $\eqref{QHPh}$, Lemma $\ref{lemmaPh}$ and Lemma $\ref{QH}$, we have
\begin{align}
||\bm{v}_{2}^{h}-\bm{w}_{0}||^{2}_{b}&=||\bm{v}_{2}^{h}-Q_{2}^{h}Q_{2}^{H}Z^{(0)}Q^{H}P^{h}\bm{v}_{2}^{h}||^{2}_{b}
\leq ||\bm{v}_{2}^{h}-Q_{2}^{H}Z^{(0)}Q^{H}P^{h}\bm{v}_{2}^{h}||^{2}_{b}\notag\\
&\leq C\{||\bm{v}_{2}^{h}-P^{h}\bm{v}_{2}^{h}||_{b}^{2}+||P^{h}\bm{v}_{2}^{h}-Q^{H}P^{h}\bm{v}_{2}^{h}||_{b}^{2}
\notag\\&\ \ \ +||Q^{H}P^{h}\bm{v}_{2}^{h}-Z^{(0)}Q^{H}P^{h}\bm{v}_{2}^{h}||^{2}_{b}+||Z^{(0)}Q^{H}P^{h}\bm{v}_{2}^{h}-Q_{2}^{H}Z^{(0)}Q^{H}P^{h}\bm{v}_{2}^{h}||^{2}_{b}\}\notag\\
&\leq  C\{H^{2}||\bm{curl}\ \bm{v}_{2}^{h}||_{b}^{2}+||Q^{H}P^{h}\bm{v}_{2}^{h}-Z^{(0)}Q^{H}P^{h}\bm{v}_{2}^{h}||^{2}_{b}\notag\\
&\ \ \ +||Z^{(0)}Q^{H}P^{h}\bm{v}_{2}^{h}-Q_{2}^{H}Z^{(0)}Q^{H}P^{h}\bm{v}_{2}^{h}||^{2}_{b}\}\label{localcomponentfirst1}.
\end{align}
On the one hand, it is easy to see that $Q^{H}P^{h}\bm{v}_{2}^{h}-Z^{(0)}Q^{H}P^{h}\bm{v}_{2}^{h}=K_{0}^{H}Q^{H}P^{h}\bm{v}_{2}^{h}$. So by using the orthogonality of $\bm{v}_{2}^{h}\perp_{b(\cdot,\cdot)} K_{0}^{H}Q^{H}P^{h}\bm{v}_{2}^{h}$, we know that the second term  of $\eqref{localcomponentfirst1}$ may be estimate as follows:
\begin{align*}
&\ \ \ \ ||K_{0}^{H}Q^{H}P^{h}\bm{v}_{2}^{h}||^{2}_{b}=b(K_{0}^{H}Q^{H}P^{h}\bm{v}_{2}^{h},Q^{H}P^{h}\bm{v}_{2}^{h})\\
&=b(K_{0}^{H}Q^{H}P^{h}\bm{v}_{2}^{h},Q^{H}P^{h}\bm{v}_{2}^{h}-\bm{v}_{2}^{h})
\leq ||K_{0}^{H}Q^{H}P^{h}\bm{v}_{2}^{h}||_{b}||Q^{H}P^{h}\bm{v}_{2}^{h}-\bm{v}_{2}^{h}||_{b}.
\end{align*}
Then, combining Lemma $\ref{lemmaPh}$ and Lemma $\ref{QH}$, we obtain
\[||K_{0}^{H}Q^{H}P^{h}\bm{v}_{2}^{h}||^{2}_{b}\leq ||Q^{H}P^{h}\bm{v}_{2}^{h}-\bm{v}_{2}^{h}||^{2}_{b}\leq CH^{2}||\bm{curl}\ \bm{v}_{2}^{h}||^{2}_{b},\]
that is
\begin{equation}\label{localcomponentfirst2}
||Q^{H}P^{h}\bm{v}_{2}^{h}-Z^{(0)}Q^{H}P^{h}\bm{v}_{2}^{h}||_{b}^{2}\leq CH^{2}||\bm{curl}\ \bm{v}_{2}^{h}||^{2}_{b}.
\end{equation}
On the other hand, we know $Z^{(0)}Q^{H}P^{h}\bm{v}_{2}^{h}-Q_{2}^{H}Z^{(0)}Q^{H}P^{h}\bm{v}_{2}^{h}=Q_{1}^{H}Z^{(0)}Q^{H}P^{h}\bm{v}_{2}^{h}.$
 Using the same argument with the proof of Lemma $\ref{LemmaK0hQ2H}$, we know that there exists $\bm{w}_{1}^{h}\in M_{h}(\lambda_{1})$ such that
 \[||Q_{1}^{H}Z^{(0)}Q^{H}P^{h}\bm{v}_{2}^{h}-\bm{w}_{1}^{h}||_{b}\leq CH||Q_{1}^{H}Z^{(0)}Q^{H}P^{h}\bm{v}_{2}^{h}||_{b}.\]
 Hence, by the orthogonality of $\bm{w}_{1}^{h}\perp_{b(\cdot,\cdot)} \bm{v}_{2}^{h}$, Cauchy-Schwarz inequality, Lemma $\ref{lemmaPh}$ and Lemma $\ref{QH}$, we obtain
 \begin{align*}
&\ \ \ \ ||Q_{1}^{H}Z^{(0)}Q^{H}P^{h}\bm{v}_{2}^{h}||^{2}_{b}\\
&=b(Q_{1}^{H}Z^{(0)}Q^{H}P^{h}\bm{v}_{2}^{h},Q^{H}P^{h}\bm{v}_{2}^{h}-\bm{v}_{2}^{h})
+b(Q_{1}^{H}Z^{(0)}Q^{H}P^{h}\bm{v}_{2}^{h},\bm{v}_{2}^{h})\\
&=b(Q_{1}^{H}Z^{(0)}Q^{H}P^{h}\bm{v}_{2}^{h},Q^{H}P^{h}\bm{v}_{2}^{h}-\bm{v}_{2}^{h})+
b(Q_{1}^{H}Z^{(0)}Q^{H}P^{h}\bm{v}_{2}^{h}-\bm{w}_{1}^{h},\bm{v}_{2}^{h})\\
&\leq||Q_{1}^{H}Z^{(0)}Q^{H}P^{h}\bm{v}_{2}^{h}||_{b}||Q^{H}P^{h}\bm{v}_{2}^{h}-\bm{v}_{2}^{h}||_{b}+
||Q_{1}^{H}Z^{(0)}Q^{H}P^{h}\bm{v}_{2}^{h}-\bm{w}_{1}^{h}||_{b}||\bm{v}_{2}^{h}||_{b}\\
&\leq CH||Q_{1}^{H}Z^{(0)}Q^{H}P^{h}\bm{v}_{2}^{h}||_{b}||\bm{curl}\bm{v}_{2}^{h}||_{b}+
CH||Q_{1}^{H}Z^{(0)}Q^{H}P^{h}\bm{v}_{2}^{h}||_{b}||\bm{curl}\bm{v}_{2}^{h}||_{b}.
\end{align*}
Then, by the definition of $Q_{1}^{h},\ Z^{(0)}$, we get
\begin{equation}\label{localcomponentfirst3}
 ||Z^{(0)}Q^{H}P^{h}\bm{v}_{2}^{h}-Q_{2}^{H}Z^{(0)}Q^{H}P^{h}\bm{v}_{2}^{h}||_{b}^{2}\leq  ||Q_{1}^{H}Z^{(0)}Q^{H}P^{h}\bm{v}_{2}^{h}||^{2}_{b}\leq CH^{2}||\bm{curl}\bm{v}_{2}^{h}||_{b}^{2},
\end{equation}
which, together with $\eqref{localcomponentfirst1}$, $\eqref{localcomponentfirst2}$, yields
\begin{align}
||\bm{v}_{2}^{h}-\bm{w}_{0}||^{2}_{b}
&\leq C\{H^{2}||\bm{curl}\bm{v}_{2}^{h}||_{b}^{2}+||K_{0}^{H}Q^{H}P^{h}\bm{v}_{2}^{h}||^{2}_{b}
+||Q_{1}^{H}Z^{0}Q^{H}P^{h}\bm{v}_{2}^{h}||^{2}_{b}\}\notag\\
&\leq CH^{2}||\bm{curl}\bm{v}_{2}^{h}||_{b}^{2}
\leq CH^{2}||\bm{v}_{2}^{h}||_{a}^{2}\label{localcomponentfirst4}.
\end{align}
For the second term of $\eqref{localcomponent}$, by $\eqref{coarsedecom}$ and Lemma $\ref{normequivalence}$, we have
\begin{align}
&\ \ \ \ ||\bm{curl}(\bm{v}_{2}^{h}-\bm{w}_{0})||^{2}_{b}
\leq  C\{||\bm{curl}\bm{v}_{2}^{h}||^{2}_{b}+||\bm{curl}\bm{w}_{0}||_{b}^{2}\}\notag\\
&\leq  C\{||\bm{curl}\bm{v}_{2}^{h}||^{2}_{b}+||\bm{w}_{0}||_{a}^{2} \}\leq   C\{||\bm{curl}\bm{v}_{2}^{h}||^{2}_{b}+||\bm{w}_{0}||_{E^{k}}^{2} \}
\leq  C||\bm{v}_{2}^{h}||_{a}^{2}.\label{localcomponentsecond}
\end{align}
Combining $\eqref{localcomponentsecond}$ and $\eqref{localcomponentfirst4}$, we know
\begin{equation}\label{Finecomponentestimate}
\sum_{i=1}^{N}||Z^{(i)}r_{h}\theta_{i}(\bm{v}_{2}^{h}-\bm{w}_{0})||^{2}_{a}
\leq CN_{0}(1+\frac{H^{2}}{\delta^{2}})||\bm{v}_{2}^{h}||_{a}^{2}.
\end{equation}
Finally, by Lemma $\ref{normequivalence}$, $\eqref{Coarsecomponentestimate}$ and $\eqref{Finecomponentestimate}$, we may obtain the proof of $\eqref{Decomposition}$.     \qed
\end{proof}

\par\noindent{\bf Proof of Theorem $\ref{Gkv2theorem}$}:\ \
 Because of Lemma $\ref{Gkstabledecomposition}$, we know that for any $\bm{v}_{2}^{h}\in M_{h}^{\perp}(\lambda_{1})$, there exist $\bm{w}_{0}\in Q_{2}^{h}M_{H}^{\perp}(\lambda_{1})$ and $\bm{w}_{i}\in V_{0}^{(i)}$ such that
\begin{equation}\label{stabledecomposition}
\bm{v}_{2}^{h}=\bm{w}_{0}+\sum_{i=1}^{N}Q_{2}^{h}\bm{w}_{i}\ \ \ \text{and}\ \ \sum_{i=0}^{N}||\bm{w}_{i}||^{2}_{E^{k}}\leq CN_{0}(1+\frac{H^{2}}{\delta^{2}})||\bm{v}_{2}^{h}||_{E^{k}}^{2}.
\end{equation}
By $\eqref{T0k},\ \eqref{Tik}$, $\eqref{stabledecomposition}$ and  Cauchy-Schwarz inequality, we may obtain
\begin{equation}
(\bm{v}_{2}^{h},\bm{v}_{2}^{h})_{E^{k}}\leq CN_{0}(1+\frac{H^{2}}{\delta^{2}})(Q_{2}^{h}\sum_{i=0}^{N}T_{i}^{k}\bm{v}_{2}^{h},\bm{v}_{2}^{h})_{E^{k}}.
\end{equation}
Moreover,
\begin{align*}
(G^{k}\bm{v}_{2}^{h},\bm{v}_{2}^{h})_{E^{k}}&=(\bm{v}_{2}^{h},\bm{v}_{2}^{h})_{E^{k}}-
\alpha(Q_{2}^{h}\sum_{i=0}^{N}T_{i}^{k}\bm{v}_{2}^{h},\bm{v}_{2}^{h})_{E^{k}}\\
&\leq (1-C\frac{1}{1+\frac{H^{2}}{\delta^{2}}})(\bm{v}_{2}^{h},\bm{v}_{2}^{h})_{E^{k}}\leq (1-C\frac{\delta^{2}}{H^{2}})(\bm{v}_{2}^{h},\bm{v}_{2}^{h})_{E^{k}}.
\end{align*}
  Based on Lemma $\ref{Gksympos}$, we may define $(G^{k})^{1/2}:M_{h}^{\perp}(\lambda_{1})\to M_{h}^{\perp}(\lambda_{1})$ and know that $(G^{k})^{1/2}$ is also a symmetric positive definite operator on $(\cdot,\cdot)_{E^{k}}$. Hence, we have
\begin{align*}
&\ \ \ \ (G^{k}\bm{v}_{2}^{h},G^{k}\bm{v}_{2}^{h})_{E^{k}}=(G^{k}(G^{k})^{1/2}\bm{v}_{2}^{h},(G^{k})^{1/2}\bm{v}_{2}^{h})_{E^{k}}\\
&\leq (1-C\frac{\delta^{2}}{H^{2}})((G^{k})^{1/2}\bm{v}_{2}^{h},(G^{k})^{1/2}\bm{v}_{2}^{h})_{E^{k}}
=(1-C\frac{\delta^{2}}{H^{2}})(G^{k}\bm{v}_{2}^{h},\bm{v}_{2}^{h})_{E^{k}}\\&\leq (1-C\frac{\delta^{2}}{H^{2}})^{2}(\bm{v}_{2}^{h},\bm{v}_{2}^{h})_{E^{k}}.
\end{align*}
Then
\begin{equation}\label{Gkestimate}
||G^{k}\bm{v}_{2}^{h}||_{E^{k}}\leq (1-C\frac{\delta^{2}}{H^{2}})||\bm{v}_{2}^{h}||_{E^{k}},
\end{equation}
which completes the proof of this theorem. \qed
\subsection{Estimate of the almost counterbalanced term $I_{2}$}
\par Before we analyze the estimate of the almost counterbalanced term $I_{2}$, we first estimate the orthogonal parameter $\beta^{k}=-\frac{b((B_{h}^{k})^{-1}\bm{r}^{k},\bm{u}^{k})}{b((B_{h}^{k})^{-1}\bm{u}^{k},\bm{u}^{k})}$.
\begin{lemma}\label{lemmaorthogonalparameter}
For sufficiently small $H$, it holds that
\begin{equation}\notag
|\beta^{k}|
\leq CH\sqrt{\lambda^{k}-\lambda_{1}^{h}},
\end{equation}
where $C$ is independent of $H,\ h$.
\end{lemma}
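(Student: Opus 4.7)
The plan is to start from the Helmholtz-cleaned identity $\bm{u}^{k}=\bm{u}_{1}^{k}-\bm{e}_{2}^{k}$ (valid because step 3 of Algorithm 2 orthogonalises every iterate against $\nabla S_{h}(\Omega_{h})$, so $K_{0}^{h}\bm{u}^{k}=0$ and $\bm{u}^{k}\in E_{h}^{0}(\Omega_{h};\epsilon_{r})$) and to exploit the resulting residual decomposition. Using $A^{h}\bm{u}_{1}^{k}=\lambda_{1}^{h}\bm{u}_{1}^{k}$ and the $b(\cdot,\cdot)$-invariance of $M_{h}^{\perp}(\lambda_{1})$ under $A^{h}$, I would write $\bm{r}^{k}=(\lambda^{k}-\lambda_{1}^{h})\bm{u}_{1}^{k}+(A^{h}-\lambda^{k})\bm{e}_{2}^{k}$. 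Setting $\bm{\phi}_{u}:=(B_{h}^{k})^{-1}\bm{u}^{k}$ and using the $b$-symmetry of $(B_{h}^{k})^{-1}$ (inherited from the $b$-symmetry of each block and the $b$-orthogonality of $Q^{H}$ and $Q^{(i)}$), the numerator rewrites as
\begin{equation*}
N^{k}=(\lambda^{k}-\lambda_{1}^{h})\,b(\bm{u}_{1}^{k},\bm{\phi}_{u})+b(\bm{e}_{2}^{k},(A^{h}-\lambda^{k})\bm{\phi}_{u}),
\end{equation*}
while the denominator is $D^{k}=b(\bm{u}^{k},\bm{\phi}_{u})$.

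Next, I would pin down the magnitudes of $\bm{\phi}_{u}$, $D^{k}$, and $b(\bm{u}_{1}^{k},\bm{\phi}_{u})$ through the splitting $(B_{h}^{k})^{-1}=(B_{0}^{k})^{-1}Q^{H}+\sum_{i=1}^{N}(B_{i}^{k})^{-1}Q^{(i)}$. Applying Corollary $\ref{priorecorollary}$ on both the fine and coarse meshes through the continuous eigenvector, together with Lemma $\ref{Lemmalambda1lambda1H}$ and the monotonicity $\lambda_{1}^{h}\leq\lambda^{k}\leq\lambda^{1}$, one obtains that $\bm{u}^{k}$ is $O(H)$-close to $\bm{u}_{1}^{H}$ in $\|\cdot\|_{b}$; writing $Q^{H}\bm{u}^{k}=\alpha\bm{u}_{1}^{H}+\bm{v}_{H}^{\perp}$ gives $|\alpha|=1+O(H)$ and $\|\bm{v}_{H}^{\perp}\|_{b}=O(H)$. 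Since $B_{0}^{k}\bm{u}_{1}^{H}=(\lambda_{1}^{H}-\lambda^{k})\bm{u}_{1}^{H}$ and $|\lambda_{1}^{H}-\lambda^{k}|\leq CH^{2}$ by $\eqref{lambda1lambda1H}$ together with $\eqref{lowerboundestimate}$--$\eqref{upperboundestimate}$, the coarse piece dominates: the spectral gap on $M_{H}^{\perp}(\lambda_{1})$ makes the contribution of $(B_{0}^{k})^{-1}\bm{v}_{H}^{\perp}$ only $O(1)$, and $\eqref{Bik}$ controls the local pieces by $O(1)$ as well. This yields the two-sided magnitude $|D^{k}|\asymp|\lambda_{1}^{H}-\lambda^{k}|^{-1}\geq c/H^{2}$ and the ratio identity $b(\bm{u}_{1}^{k},\bm{\phi}_{u})/D^{k}=1+O(H)$. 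For the second summand of $N^{k}$, project $\bm{\phi}_{u}$ onto $M_{h}^{\perp}(\lambda_{1})$ (the $M_{h}(\lambda_{1})$- and $\nabla S_{h}$-parts are annihilated by $b(\bm{e}_{2}^{k},\cdot)$ combined with $(A^{h}-\lambda^{k})$) and interpret it as $(\bm{e}_{2}^{k},Q_{2}^{h}\bm{\phi}_{u})_{E^{k}}$ in the notation of Lemma $\ref{normequivalence}$; Cauchy--Schwarz, combined with $\eqref{e2Eku1b}$ and the bound $\|Q_{2}^{h}\bm{u}_{1}^{H}\|_{b}\leq CH$ from Lemma $\ref{LemmaK0hQ2H}$ (which, rescaled by $(\lambda_{1}^{H}-\lambda^{k})^{-1}$, produces $\|Q_{2}^{h}\bm{\phi}_{u}\|_{E^{k}}\leq C/H$) gives the bound $C\sqrt{\lambda^{k}-\lambda_{1}^{h}}/H$.

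Dividing $N^{k}$ by $|D^{k}|\geq c/H^{2}$, and writing $|\lambda^{k}-\lambda_{1}^{h}|=\sqrt{\lambda^{k}-\lambda_{1}^{h}}\cdot\sqrt{\lambda^{k}-\lambda_{1}^{h}}\leq CH\sqrt{\lambda^{k}-\lambda_{1}^{h}}$ (using $\sqrt{\lambda^{k}-\lambda_{1}^{h}}\leq CH$ from Corollary $\ref{priorecorollary}$ and Lemma $\ref{Lemmalambda1lambda1H}$) to absorb the first term, one deduces $|\beta^{k}|\leq CH\sqrt{\lambda^{k}-\lambda_{1}^{h}}$. The main obstacle is the uniform lower bound $|D^{k}|\geq c/H^{2}$: because $A^{H}-\lambda^{k}$ is nearly singular precisely along $\bm{u}_{1}^{H}$ and the sign of $\lambda_{1}^{H}-\lambda^{k}$ changes between the two scenarios of Remark 3.2, one must show both that the $\bm{u}_{1}^{H}$-mass $|\alpha|$ of $Q^{H}\bm{u}^{k}$ stays bounded below by $1-O(H)$ throughout the iteration, and that the $O(1)$ corrections coming from $\bm{v}_{H}^{\perp}$ and from the local pieces cannot cancel the $O(1/H^{2})$ dominant pole. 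Handling this requires refining the initial grid when $\lambda_{1}^{H}>\lambda_{1}^{h}$ (per Remark 3.2) so the sign of the dominant pole is fixed along the iteration, together with careful use of the gap estimates of Lemmas $\ref{LemmaK0hQ2H}$ and $\ref{normequivalence}$.
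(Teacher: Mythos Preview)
Your strategy is the paper's: split $\bm r^{k}=(\lambda^{k}-\lambda_{1}^{h})\bm u_{1}^{k}+(A^{h}-\lambda^{k})\bm e_{2}^{k}$, push the preconditioner through the coarse eigen-decomposition so that the $(\lambda_{1}^{H}-\lambda^{k})^{-1}$ pole appears in both numerator and denominator, and let it cancel. But your execution contains a real gap in the treatment of the second summand of $N^{k}$.

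You write that $\|Q_{2}^{h}\bm u_{1}^{H}\|_{b}\leq CH$, ``rescaled by $(\lambda_{1}^{H}-\lambda^{k})^{-1}$, produces $\|Q_{2}^{h}\bm\phi_{u}\|_{E^{k}}\leq C/H$''. The rescaling actually gives $\|Q_{2}^{h}\bm\phi_{u}\|_{E^{k}}\lesssim H\,|\lambda_{1}^{H}-\lambda^{k}|^{-1}$, and turning this into $C/H$ requires the \emph{lower} bound $|\lambda_{1}^{H}-\lambda^{k}|\geq cH^{2}$. Nothing guarantees that: Corollary \ref{priorecorollary} and Lemma \ref{Lemmalambda1lambda1H} give only the upper bound $|\lambda_{1}^{H}-\lambda^{k}|\leq CH^{2}$, and Remark 3.2 merely ensures $\lambda^{k}\neq\lambda_{1}^{H}$, not that the gap stays of order $H^{2}$. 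So for iterates with $\lambda^{k}$ close to $\lambda_{1}^{H}$ the quantity $\|Q_{2}^{h}\bm\phi_{u}\|_{E^{k}}$ can be arbitrarily large, and dividing by the crude bound $|D^{k}|\geq c/H^{2}$ does not control the ratio.

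The repair is simple and is exactly what the paper does: do not replace $|\lambda_{1}^{H}-\lambda^{k}|^{-1}$ by $H^{-2}$ anywhere. You already established the two-sided estimate $|D^{k}|\asymp|\lambda_{1}^{H}-\lambda^{k}|^{-1}$; use the sharp lower bound $|D^{k}|\geq c\,|\lambda_{1}^{H}-\lambda^{k}|^{-1}$ when you divide. Then the second term contributes
\[
\frac{\|\bm e_{2}^{k}\|_{E^{k}}\,\|Q_{2}^{h}\bm\phi_{u}\|_{E^{k}}}{|D^{k}|}
\;\lesssim\;\frac{\sqrt{\lambda^{k}-\lambda_{1}^{h}}\cdot H\,|\lambda_{1}^{H}-\lambda^{k}|^{-1}}{|\lambda_{1}^{H}-\lambda^{k}|^{-1}}
\;=\;CH\sqrt{\lambda^{k}-\lambda_{1}^{h}},
\]
with no spurious assumption on the size of $|\lambda_{1}^{H}-\lambda^{k}|$. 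The paper carries $|\lambda^{k}-\lambda_{1}^{H}|$ symbolically throughout (see \eqref{numeratorfirst}--\eqref{numeratorsecond} and \eqref{denominatorfirst}) precisely to force this cancellation; once you make the same adjustment, your argument and the paper's coincide.
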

\begin{proof} First, we decompose the numerator of $|\beta^{k}|$. Due to the definition of $\bm{r}^{k}, (B_{h}^{k})^{-1}$ and the fact $\bm{u}^{k}=\bm{u}_{1}^{k}-\bm{e}_{2}^{k}\in E_{h}^{0}(\Omega_{h};\epsilon_{r})$, we get
\begin{align}
&\ \ \ \ |b((B_{h}^{k})^{-1}\bm{r}^{k},\bm{u}^{k})|\notag\\
&=|b((B_{h}^{k})^{-1}(\lambda^{k}-A^{h})\bm{u}_{1}^{k},\bm{u}^{k})+b((B_{h}^{k})^{-1}(A^{h}-\lambda^{k})\bm{e}_{2}^{k},\bm{u}^{k})|\notag\\
&\leq(\lambda^{k}-\lambda_{1}^{h})|b((B_{h}^{k})^{-1}\bm{u}_{1}^{k},\bm{u}^{k})|+|b((B_{h}^{k})^{-1}(A^{h}-\lambda^{k})\bm{e}_{2}^{k},\bm{u}^{k})|\notag\\
&=(\lambda^{k}-\lambda_{1}^{h})|b((B_{h}^{k})^{-1}\bm{u}_{1}^{k},\bm{u}^{k})|+|b((B_{0}^{k})^{-1}Q_{1}^{H}Z^{(0)}Q^{H}(A^{h}-\lambda^{k})\bm{e}_{2}^{k},\bm{u}^{k})|\notag\\
&\ \ \ +|b((B_{0}^{k})^{-1}Q_{2}^{H}Z^{(0)}Q^{H}(A^{h}-\lambda^{k})\bm{e}_{2}^{k},\bm{u}^{k})+
|b(\sum_{i=1}^{N}(B_{i}^{k})^{-1}Z^{(i)}Q^{(i)}(A^{h}-\lambda^{k})\bm{e}_{2}^{k},\bm{u}^{k})|\notag\\
&\leq(\lambda^{k}-\lambda_{1}^{h})|b((B_{h}^{k})^{-1}\bm{u}_{1}^{k},\bm{u}^{k})|+|b((B_{0}^{k})^{-1}Q_{1}^{H}Z^{(0)}Q^{H}(A^{h}-\lambda^{k})\bm{e}_{2}^{k},\bm{u}^{k})|    \notag \\&\ \ \ +||T_{0}^{k}\bm{e}_{2}^{k}||_{b}+||\sum_{i=1}^{N}T_{i}^{k}\bm{e}_{2}^{k}||_{b}:=(\lambda^{k}-\lambda_{1}^{h})\Gamma_{1}+\Gamma_{2}+\Gamma_{3}+\Gamma_{4},\label{numerator}
\end{align}
where the last inequality holds owing to Cauchy-Schwarz inequality and $||\bm{u}^{k}||_{b}=1$. In fact, for $k=1$, it is obvious that $||\bm{u}^{1}||_{b}=1$ in algorithm 2. As $\bm{u}^{k}$ is the minimum value point of Rayleigh quotient in $W^{k} (k\geq 2)$ in algorithm 2, without loss of generality, let $||\bm{u}^{k}||_{b}=1$.
Next, we estimate  $\eqref{numerator}$ one by one. For the first term $\Gamma_{1}$ in $\eqref{numerator}$, we have
\begin{align}
&\ \ \ \ |b((B_{h}^{k})^{-1}\bm{u}_{1}^{k},\bm{u}^{k})|\notag\\
&=|b((B_{0}^{k})^{-1}Q_{1}^{H}Z^{(0)}Q^{H}\bm{u}_{1}^{k},\bm{u}^{k})|+
|b((B_{0}^{k})^{-1}Q_{2}^{H}Z^{(0)}Q^{H}\bm{u}_{1}^{k},\bm{u}^{k})|+
|\sum_{i=1}^{N}b((B_{i}^{k})^{-1}Z^{(i)}Q^{(i)}\bm{u}_{1}^{k},\bm{u}^{k})|\notag\\
&\leq \{||(B_{0}^{k})^{-1}Q_{1}^{H}Z^{(0)}Q^{H}\bm{u}_{1}^{k}||_{b}+
||(B_{0}^{k})^{-1}Q_{2}^{H}Z^{(0)}Q^{H}\bm{u}_{1}^{k}||_{b}
+\sum_{i=1}^{N}||(B_{i}^{k})^{-1}Z^{(i)}Q^{(i)}\bm{u}_{1}^{k}||_{b}\}||\bm{u}^{k}||_{b}.\label{numerator1}
\end{align}
By the definition of $B_{0}^{k}$, $\eqref{Bik}$, $\eqref{Q2HQ1}$ and Assumption $1$, we know that for sufficiently small $H$,
\begin{align}
&\ \ \ \ |b((B_{h}^{k})^{-1}\bm{u}_{1}^{k},\bm{u}^{k})| \leq \frac{1}{|\lambda^{k}-\lambda_{1}^{H}|}||Q_{1}^{H}\bm{u}_{1}^{k}||_{b}+CH||\bm{u}_{1}^{k}||_{b}+
CH^{2}\sum_{i=1}^{N}||\bm{u}_{1}^{k}||_{b,\Omega^{'}_{i}}\notag\\
&\leq \frac{C}{|\lambda^{k}-\lambda_{1}^{H}|}||\bm{u}_{1}^{k}||_{b}+CH||\bm{u}_{1}^{k}||_{b}+
CN_{0}H^{2}||\bm{u}_{1}^{k}||_{b}\leq\frac{C}{|\lambda^{k}-\lambda_{1}^{H}|}.\label{numeratorfirst}
\end{align}
For the second term $\Gamma_{2}$ in $\eqref{numerator}$, by the definition of $Q_{1}^{H},\ Q_{2}^{h}$, Lemma $\ref{LemmaK0hQ2H}$ and $\eqref{e2Eku1b}$, we get
\begin{align}
&\ \ \ \ |b((B_{0}^{k})^{-1}Q_{1}^{H}Z^{(0)}Q^{H}(A^{h}-\lambda^{k})\bm{e}_{2}^{k},\bm{u}^{k})|
=\frac{1}{|\lambda^{k}-\lambda_{1}^{H}|}|b(Q_{1}^{H}(A^{h}-\lambda^{k})\bm{e}_{2}^{k},\bm{u}^{k})|\notag\\
&=\frac{1}{|\lambda^{k}-\lambda_{1}^{H}|}|(\bm{e}_{2}^{k},Q_{2}^{h}Q_{1}^{H}\bm{u}^{k})_{E^{k}}|\leq \frac{C}{|\lambda^{k}-\lambda_{1}^{H}|}||\bm{e}_{2}^{k}||_{E^{k}}||Q_{2}^{h}Q_{1}^{H}\bm{u}^{k}||_{a}\leq \frac{CH\sqrt{\lambda^{k}-\lambda_{1}^{h}}}{|\lambda^{k}-\lambda_{1}^{H}|}.\label{numeratorsecond}
\end{align}
For the third term $\Gamma_{3}$ in $\eqref{numerator}$, by the Poincar$\acute{e}$ inequality, $\eqref{e2Eku1b}$ and $\eqref{Gkcoarse}$, we obtain
\begin{equation}\label{numeratorthird}
||T_{0}^{k}\bm{e}_{2}^{k}||_{b}\leq C||T_{0}^{k}\bm{e}_{2}^{k}||_{E^{k}}\leq C||\bm{e}_{2}^{k}||_{E^{k}}
=C\sqrt{\lambda^{k}-\lambda_{1}^{h}}||\bm{u}_{1}^{k}||_{b}\leq C\sqrt{\lambda^{k}-\lambda_{1}^{h}}.
\end{equation}
For the fourth term $\Gamma_{4}$ in $\eqref{numerator}$, by Lemma $\ref{strengthenedCauchySchwarzinequality}$, the Poincar$\acute{e}$ inequality, $\eqref{e2Eku1b}$ and $\eqref{Gkifinelocal}$, we get
\begin{equation}\label{numeratorfourth}
||\sum_{i=1}^{N}T_{i}^{k}\bm{e}_{2}^{k}||_{b}
\leq \sqrt{N_{0}\sum_{i=1}^{N}||T_{i}^{k}\bm{e}_{2}^{k}||^{2}_{b}}
\leq CH\sqrt{N_{0}\sum_{i=1}^{N}||T_{i}^{k}\bm{e}_{2}^{k}||^{2}_{E^{k}}}
\leq CH||\bm{e}_{2}^{k}||_{E^{k}}
\leq CH\sqrt{\lambda^{k}-\lambda_{1}^{h}}.
\end{equation}
Combining $\eqref{numerator}$ and  $\eqref{numeratorfirst}\sim\eqref{numeratorfourth}$ together, for sufficiently small $H$, we have
\begin{align}\label{estimatnum}
|b((B_{h}^{k})^{-1}\bm{r}^{k},\bm{u}^{k})|\leq \frac{\lambda^{k}-\lambda_{1}^{h}+CH\sqrt{\lambda^{k}-\lambda_{1}^{h}}}{|\lambda^{k}-\lambda_{1}^{H}|}.
\end{align}
\par On the other hand, we analyze the denominator of $|\beta^{k}|$
\begin{align}
|b((B_{h}^{k})^{-1}\bm{u}^{k},\bm{u}^{k})|&\geq|b((B_{0}^{k})^{-1}Q_{1}^{H}Q^{H}\bm{u}^{k},\bm{u}^{k})|
-|b((B_{0}^{k})^{-1}Q_{2}^{H}Q^{H}\bm{u}^{k},\bm{u}^{k})|\notag\\
&\ \ \
-|b(\sum_{i=1}^{N}(B_{i}^{k})^{-1}Z^{(i)}Q^{(i)}\bm{u}^{k},\bm{u}^{k})|
:=J_{1}-J_{2}-J_{3}.\label{estimatedenoest}
\end{align}
Similarly, we estimate the terms in $\eqref{estimatedenoest}$ one by one. Note that by Lemma $\ref{LemmaK0hQ2H}$, we have
\begin{align}
||Q_{2}^{H}\bm{u}^{k}||_{b} &\leq ||Q_{2}^{H}\bm{u}_{1}^{k}||_{b}+||Q_{2}^{H}\bm{e}_{2}^{k}||_{b}\leq CH||\bm{u}_{1}^{k}||_{b}+C||\bm{e}_{2}^{k}||_{E^{k}}\notag\\
&\leq (CH+C\sqrt{\lambda^{k}-\lambda_{1}^{h}} )||\bm{u}_{1}^{k}||_{b}\leq CH,\label{K0HQ1HQ2Hfirst}
\end{align}
and
\begin{equation}\label{K0HQ1HQ2Hsecond}
||K_{0}^{H}\bm{u}^{k}||_{b} \leq ||K_{0}^{H}\bm{u}_{1}^{k}||_{b}+||K_{0}^{H}\bm{e}_{2}^{k}||_{b}\leq CH||\bm{u}_{1}^{k}||_{b}+C||\bm{e}_{2}^{k}||_{E^{k}}\leq CH.
\end{equation}
Meanwhile,
\begin{align}
||Q_{1}^{H}\bm{u}^{k}||^{2}_{b}&=||Q^{H}\bm{u}^{k}||^{2}_{b}-||Q_{2}^{H}\bm{u}^{k}||^{2}_{b}-||K_{0}^{H}\bm{u}^{k}||^{2}_{b}\notag\\
&\geq ||\bm{u}^{k}||_{b}^{2}-||\bm{u}^{k}-Q^{H}\bm{u}^{k}||_{b}^{2}-CH^{2}\geq 1-CH^{2},\label{K0HQ1HQ2Hthird}
\end{align}
where the last inequality holds due to the following estimate
\begin{align*}
||\bm{u}^{k}-Q^{H}\bm{u}^{k}||_{b}&\leq ||\bm{u}^{k}-Q^{H}P^{h}\bm{u}^{k}||_{b}+||Q^{H}P^{h}\bm{u}^{k}-Q^{H}\bm{u}^{k}||_{b}\\
&\leq C||\bm{u}^{k}-P^{h}\bm{u}^{k}||_{b}+||P^{h}\bm{u}^{k}-Q^{H}P^{h}\bm{u}^{k}||_{b}\\
&\leq Ch||\bm{curl}\bm{u}^{k}||_{b}+CH|P^{h}\bm{u}^{k}|_{1}\leq Ch||\bm{curl}\bm{u}^{k}||_{b}+CH||\bm{curl}P^{h}\bm{u}^{k}||_{b}\\
&\leq CH||\bm{curl}\bm{u}^{k}||_{b}\leq CH||\bm{u}^{k}||_{a}=CH\sqrt{\lambda^{k}}||\bm{u}^{k}||_{b}\leq CH,
\end{align*}
where the similar argument in $\eqref{QHPh}$ is used. Hence, for the first two terms $J_{1},\ J_{2}$ of $\eqref{estimatedenoest}$, combining $\eqref{K0HQ1HQ2Hfirst}$ and $\eqref{K0HQ1HQ2Hthird}$, as long as sufficiently small $H$, we have
\begin{align}
J_{1}&=|b((B_{0}^{k})^{-1}Q_{1}^{H}\bm{u}^{k},Q_{1}^{H}\bm{u}^{k})|\notag
\\&=\frac{1}{|\lambda^{k}-\lambda_{1}^{H}|}||Q_{1}^{H}\bm{u}^{k}||_{b}^{2}
\geq \frac{C}{|\lambda^{k}-\lambda_{1}^{H}|},\label{denominatorfirst}
\end{align}
and
\begin{align}
J_{2}&\leq |b((B_{0}^{k})^{-1}Q_{2}^{H}\bm{u}^{k},Q_{2}^{H}\bm{u}^{k})|\notag\\
&\leq \frac{1}{\lambda_{2}^{H}-\lambda^{k}}||Q_{2}^{H}\bm{u}^{k}||^{2}_{b}
\leq \frac{CH^{2}}{\lambda_{2}^{H}-\lambda^{k}}
\leq CH^{2}.\label{denominatorsecond}
\end{align}
For the third term $J_{3}$ of $\eqref{estimatedenoest}$, by $\eqref{Bik}$ and Assumption $1$, we know
\begin{align}
J_{3}
&\leq CH^{2}\sum_{i=1}^{N}b(Z^{(i)}Q^{(i)}\bm{u}^{k},Z^{(i)}Q^{(i)}\bm{u}^{k})\notag\\
&\leq CH^{2}\sum_{i=1}^{N}||\bm{u}^{k}||_{b,\Omega_{i}^{'}}^{2}
\leq CH^{2}.\label{denominatorthird}
\end{align}
Combining $\eqref{denominatorfirst}\sim \eqref{denominatorthird}$ together, we get
\begin{equation}\notag
 |b((B_{h}^{k})^{-1}\bm{u}^{k},\bm{u}^{k})|\geq \frac{C}{|\lambda^{k}-\lambda_{1}^{H}|},
\end{equation}
which, together with $\eqref{estimatnum}$, proves this lemma. \qed
\end{proof}

\par Next, we estimate the almost counterbalanced term $I_{2}$. For convenience of analysis, we set $\hat{\bm{v}}_{1}^{H}:=Q_{1}^{H}Z^{(0)}Q^{H}(A^{h}-\lambda^{k})\bm{u}^{k},\ \check{\bm{v}}_{1}^{H}:=-\beta^{k}Q_{1}^{H}Z^{(0)}Q^{H}\bm{u}^{k}.$
 It is easy to see that \[\alpha Q_{2}^{h}(B_{0}^{k})^{-1}Q_{1}^{H}Z^{(0)}Q^{H}(A_{h}-\lambda^{k})\bm{u}^{k}
-\alpha\beta^{k}Q_{2}^{h}(B_{0}^{k})^{-1}Q_{1}^{H}Z^{(0)}Q^{H}\bm{u}^{k}=\alpha Q_{2}^{h}(B_{0}^{k})^{-1}(\hat{\bm{v}}_{1}^{H}+\check{\bm{v}}_{1}^{H}).\]
 For $s=1,\ 2$, we have
\begin{align}
Q_{s}^{h}\bm{t}^{k+1}&=Q_{s}^{h}\bm{e}^{k+1}=Q_{s}^{h}(B_{h}^{k})^{-1}\bm{r}^{k}+\beta^{k}Q_{s}^{h}(B_{h}^{k})^{-1}\bm{u}^{k}\notag\\
&=Q_{s}^{h}(B_{0}^{k})^{-1}Q_{1}^{H}Z^{(0)}Q^{H}\bm{r}^{k}+Q_{s}^{h}(B_{0}^{k})^{-1}Q_{2}^{H}Z^{(0)}Q^{H}\bm{r}^{k}+
Q_{s}^{h}\sum_{i=1}^{N}(B_{i}^{k})^{-1}Z^{(i)}Q^{(i)}\bm{r}^{k}\notag \\
&\ \ \ +\beta^{k}Q_{s}^{h}(B_{0}^{k})^{-1}Q_{1}^{H}Z^{(0)}Q^{H}\bm{u}^{k}+\beta^{k}Q_{s}^{h}(B_{0}^{k})^{-1}Q_{2}^{H}Z^{(0)}Q^{H}\bm{u}^{k}\notag\\
&\ \ \ +
\beta^{k}Q_{s}^{h}\sum_{i=1}^{N}(B_{i}^{k})^{-1}Z^{(i)}Q^{(i)}\bm{u}^{k}.\label{Qs}
\end{align}
Denote
\begin{align}
f(Q_{s}^{h})&:=||Q_{s}^{h}(B_{0}^{k})^{-1}Q_{2}^{H}Z^{(0)}Q^{H}\bm{r}^{k}||_{b}+
||Q_{s}^{h}\sum_{i=1}^{N}(B_{i}^{k})^{-1}Z^{(i)}Q^{(i)}\bm{r}^{k}||_{b}
\notag \\&\ \ \ \ +|\beta^{k}|||Q_{s}^{h}(B_{0}^{k})^{-1}Q_{2}^{H}Z^{(0)}Q^{H}\bm{u}^{k}||_{b}+
|\beta^{k}|||Q_{s}^{h}\sum_{i=1}^{N}(B_{i}^{k})^{-1}Z^{(i)}Q^{(i)}\bm{u}^{k}||_{b}.\ \ \ \ \ (s=1,2)\label{fQsh}
\end{align}
 Using Lemma $\ref{normequivalence}$, the Pythagorean theorem on $E_{h}(\Omega_{h})$ in the sense of $||\cdot||_{b}$ and Lemma $\ref{LemmaK0hQ2H}$, we have
\begin{align}
&\ \ \ \ ||Q_{2}^{h}(B_{0}^{k})^{-1}(\hat{\bm{v}}_{1}^{H}+\check{\bm{v}}_{1}^{H})||^{2}_{E^{k}}\leq C||Q_{2}^{h}(B_{0}^{k})^{-1}(\hat{\bm{v}}_{1}^{H}+\check{\bm{v}}_{1}^{H})||^{2}_{E^{h}}\notag\\
&=C\lambda_{1}^{h}||K_{0}^{h}(B_{0}^{k})^{-1}(\hat{\bm{v}}_{1}^{H}+\check{\bm{v}}_{1}^{H})||^{2}_{b}-
(\lambda_{1}^{h}-\lambda_{1}^{H})||(B_{0}^{k})^{-1}(\hat{\bm{v}}_{1}^{H}+\check{\bm{v}}_{1}^{H})||^{2}_{b}\notag\\
&\leq CH^{2}||(B_{0}^{k})^{-1}(\hat{\bm{v}}_{1}^{H}+\check{\bm{v}}_{1}^{H})||^{2}_{b}\label{combine1}.
\end{align}
Moreover,
\begin{align*}
&\ \ \ \ ||(B_{0}^{k})^{-1}(\hat{\bm{v}}_{1}^{H}+\check{\bm{v}}_{1}^{H})||^{2}_{b}\\
&=||K_{0}^{h}(B_{0}^{k})^{-1}(\hat{\bm{v}}_{1}^{H}+\check{\bm{v}}_{1}^{H})||^{2}_{b}+
||Q_{1}^{h}(B_{0}^{k})^{-1}(\hat{\bm{v}}_{1}^{H}+\check{\bm{v}}_{1}^{H})||^{2}_{b}+
||Q_{2}^{h}(B_{0}^{k})^{-1}(\hat{\bm{v}}_{1}^{H}+\check{\bm{v}}_{1}^{H})||^{2}_{b}\\
&\leq CH^{2}||(B_{0}^{k})^{-1}(\hat{\bm{v}}_{1}^{H}+\check{\bm{v}}_{1}^{H})||^{2}_{b}+
||Q_{1}^{h}(B_{0}^{k})^{-1}(\hat{\bm{v}}_{1}^{H}+\check{\bm{v}}_{1}^{H})||^{2}_{b}+
CH^{2}||(B_{0}^{k})^{-1}(\hat{\bm{v}}_{1}^{H}+\check{\bm{v}}_{1}^{H})||^{2}_{b}.
\end{align*}
Then
\[||(B_{0}^{k})^{-1}(\hat{\bm{v}}_{1}^{H}+\check{\bm{v}}_{1}^{H})||^{2}_{b}\leq C||Q_{1}^{h}(B_{0}^{k})^{-1}(\hat{\bm{v}}_{1}^{H}+\check{\bm{v}}_{1}^{H})||^{2}_{b}.\]
By $\eqref{combine1}$, we may obtain
\[ ||Q_{2}^{h}(B_{0}^{k})^{-1}(\hat{\bm{v}}_{1}^{H}+\check{\bm{v}}_{1}^{H})||_{E^{k}}\leq CH||Q_{1}^{h}(B_{0}^{k})^{-1}(\hat{\bm{v}}_{1}^{H}+\check{\bm{v}}_{1}^{H})||_{b}.\]
Furthermore, combining $\eqref{Qs}$ and $\eqref{fQsh}$, we have
\begin{align}
&\ \ \ \ ||Q_{2}^{h}(B_{0}^{k})^{-1}(\hat{\bm{v}}_{1}^{H}+\check{\bm{v}}_{1}^{H})||_{E^{k}}\leq CH||Q_{1}^{h}(B_{0}^{k})^{-1}(\hat{\bm{v}}_{1}^{H}+\check{\bm{v}}_{1}^{H})||_{b}\notag\\
&\leq CH||Q_{1}^{h}\bm{e}^{k+1}-Q_{1}^{h}(B_{0}^{k})^{-1}Q_{2}^{H}Z^{(0)}Q^{H}\bm{r}^{k}-Q_{1}^{h}\sum_{i=1}^{N}(B_{i}^{k})^{-1}Z^{(i)}Q^{(i)}\bm{r}^{k}\notag\\
&\ \ \ -\beta^{k}Q_{1}^{h}(B_{0}^{k})^{-1}Q_{2}^{H}Z^{(0)}Q^{H}\bm{u}^{k}-\beta^{k}Q_{1}^{h}\sum_{i=1}^{N}(B_{i}^{k})^{-1}Z^{(i)}Q^{(i)}\bm{u}^{k}||_{b}\notag\\
&\leq CH\{||Q_{1}^{h}\bm{e}^{k+1}||_{b}+f(Q_{1}^{h})\}= CH\{||Q_{1}^{h}\bm{t}^{k+1}||_{b}+f(Q_{1}^{h})\}.\label{Q1htk1}
\end{align}
 Using the orthogonal property of Jacobi-Davidson method and Helmholtz orthogonal decomposotion, we get
\begin{align*}
||Q_{1}^{h}\bm{t}^{k+1}||_{b}
&=||Q_{1}^{h}\bm{e}^{k+1}||_{b}=|b(\bm{u}_{1}^{h},\bm{e}^{k+1})|=|b(\bm{u}_{1}^{h}-\bm{u}^{k},\bm{e}^{k+1})|\\
&=|b(\bm{u}_{1}^{h}-\bm{u}^{k},\bm{e}^{k+1}-\nabla{p}_{h}^{k})|\leq ||\bm{u}_{1}^{h}-\bm{u}^{k}||_{b}||\bm{t}^{k+1}||_{b}.
\end{align*}
By $\eqref{u1e2definition}$, $\eqref{e2Eku1b}$ and the Poincar$\acute{e}$ inequality, for sufficiently small $H$, we have
\begin{align}
&\ \ \ \ ||\bm{u}_{1}^{h}-\bm{u}^{k}||_{b}\leq ||\bm{u}_{1}^{h}-\bm{u}_{1}^{k}+\bm{e}_{2}^{k}||_{b}\leq||\bm{u}_{1}^{h}-\bm{u}_{1}^{k}||_{b}+||\bm{e}_{2}^{k}||_{b}\notag\\
&= ||\bm{u}_{1}^{h}||_{b}-||\bm{u}_{1}^{k}||_{b}+||\bm{e}_{2}^{k}||_{b}
=||\bm{u}^{k}||_{b}-||\bm{u}_{1}^{k}||_{b}+||\bm{e}_{2}^{k}||_{b}\notag\\
&\leq 2||\bm{e}_{2}^{k}||_{b}\leq C||\bm{e}_{2}^{k}||_{E^{k}}\leq C\sqrt{\lambda^{k}-\lambda_{1}^{h}}. \label{Q1htk2}
\end{align}
Then
\[ ||Q_{1}^{h}\bm{t}^{k+1}||_{b}\leq C\sqrt{\lambda^{k}-\lambda_{1}^{h}}||\bm{t}^{k+1}||_{b}. \]
It is known that \[||\bm{t}^{k+1}||_{b}\leq ||Q_{1}^{h}\bm{t}^{k+1}||_{b}+||Q_{2}^{h}\bm{t}^{k+1}||_{b},\] we have
\[ ||\bm{t}^{k+1}||_{b}\leq C||Q_{2}^{h}\bm{t}^{k+1}||_{b}.\] Furthermore,
\begin{equation}\label{Q1ht}
 ||Q_{1}^{h}\bm{t}^{k+1}||_{b}\leq C\sqrt{\lambda^{k}-\lambda_{1}^{h}}||Q_{2}^{h}\bm{t}^{k+1}||_{b}.
\end{equation}
Combining $\eqref{Qs}$, $\eqref{fQsh}$, $\eqref{Q1htk1}$ and $\eqref{Q1ht}$, we obtain
\begin{align*}
&\ \ \ \ ||Q_{2}^{h}(B_{0}^{k})^{-1}(\hat{\bm{v}}_{1}^{H}+\check{\bm{v}}_{1}^{H})||_{E^{k}}\leq CH\{||Q_{1}^{h}\bm{t}^{k+1}||_{b}+f(Q_{1}^{h})\}\\
&\leq CHf(Q_{1}^{h})+CH\sqrt{\lambda^{k}-\lambda_{1}^{h}}\{||Q_{2}^{h}(B_{0}^{k})^{-1}(\hat{\bm{v}}_{1}^{H}+\check{\bm{v}}_{1}^{k})||_{b}+f(Q_{2}^{h})   \}\\
&\leq CHf(Q_{1}^{h})+CH\sqrt{\lambda^{k}-\lambda_{1}^{h}}\{||Q_{2}^{h}(B_{0}^{k})^{-1}(\hat{\bm{v}}_{1}^{H}+\check{\bm{v}}_{1}^{k})||_{E^{k}}+f(Q_{2}^{h})   \}.
\end{align*}
Then
\begin{equation}\label{Twocom}
 ||Q_{2}^{h}(B_{0}^{k})^{-1}(\hat{\bm{v}}_{1}^{H}+\check{\bm{v}}_{1}^{H})||_{E^{k}}\leq CHf(Q_{1}^{h})+CH\sqrt{\lambda^{k}-\lambda_{1}^{h}}f(Q_{2}^{h}).
\end{equation}
Next, we estimate the terms $f(Q_{1}^{h})$ and $f(Q_{2}^{h})$ in $\eqref{fQsh}$.
For the first term in $\eqref{fQsh}$, by the Poincar$\acute{e}$ inequality, $\eqref{e2Eku1b}$ and $\eqref{Gkcoarse}$, we get
\begin{align}
&\ \ \ \ ||(B_{0}^{k})^{-1}Q_{2}^{H}Z^{(0)}Q^{H}\bm{r}_{k}||_{b}\notag\\
&\leq ||(B_{0}^{k})^{-1}Q_{2}^{H}Z^{(0)}Q^{H}(A_{h}-\lambda^{k})\bm{e}_{2}^{k}||_{b}+
(\lambda^{k}-\lambda_{1}^{h})||(B_{0}^{k})^{-1}Q_{2}^{H}Z^{(0)}Q^{H}\bm{u}_{1}^{k}||_{b}\notag\\
&\leq ||T_{0}^{k}\bm{e}_{2}^{k}||_{b}+
C(\lambda^{k}-\lambda_{1}^{h})||Q_{2}^{H}\bm{u}_{1}^{k}||_{b}\notag\\
&\leq ||T_{0}^{k}\bm{e}_{2}^{k}||_{E^{k}}+
CH(\lambda^{k}-\lambda_{1}^{h})||\bm{u}_{1}^{k}||_{b}\leq C||\bm{e}_{2}^{k}||_{E_{k}}.\label{twocomestimate1}
\end{align}
For the second term in $\eqref{fQsh}$, by Lemma $\ref{strengthenedCauchySchwarzinequality}$, the Poincar$\acute{e}$ inequality, $\eqref{e2Eku1b}$ and $\eqref{Gkifinelocal}$, we have
\begin{align}
&\ \ \ \ ||\sum_{i=1}^{N}(B_{i}^{k})^{-1}Z^{(i)}Q^{(i)}\bm{r}^{k}||_{b}\notag\\
&\leq ||\sum_{i=1}^{N}(B_{i}^{k})^{-1}Z^{(i)}Q^{(i)}(A^{h}-\lambda^{k})\bm{e}_{2}^{k}||_{b}+
(\lambda^{k}-\lambda_{1}^{h})||\sum_{i=1}^{N}(B_{i}^{k})^{-1}Z^{(i)}Q^{(i)}\bm{u}_{1}^{k}||_{b}\notag\\
&\leq ||\sum_{i=1}^{N}T_{i}^{k}\bm{e}_{2}^{k}||_{b}+
CH^{2}(\lambda^{k}-\lambda_{1}^{h})||\sum_{i=1}^{N}Z^{(i)}Q^{(i)}\bm{u}_{1}^{k}||_{b}\leq CH^{2}||\bm{e}_{2}^{k}||_{E^{k}}.\label{twocomestimate2}
\end{align}
For the third term in $\eqref{fQsh}$, by $\eqref{u1e2definition}$, Lemma $\ref{LemmaK0hQ2H}$ and Lemma $\ref{lemmaorthogonalparameter}$, we obtain
\begin{align}
&\ \ \ \ |\beta^{k}|\ ||(B_{0}^{k})^{-1}Q_{2}^{H}Z^{(0)}Q^{H}\bm{u}^{k}||_{b}\notag\\
&\leq CH\sqrt{\lambda^{k}-\lambda_{1}^{h}}\{||(B_{0}^{k})^{-1}Q_{2}^{H}Z^{(0)}Q^{H}\bm{e}_{2}^{k}||_{b}+
||(B_{0}^{k})^{-1}Q_{2}^{H}Z^{(0)}Q^{H}\bm{u}_{1}^{k}||_{b}\}\notag\\
&\leq CH\sqrt{\lambda^{k}-\lambda_{1}^{h}}\{||\bm{e}_{2}^{k}||_{b}+
||Q_{2}^{H}\bm{u}_{1}^{k}||_{b}\}
\leq CH^{2}||\bm{e}_{2}^{k}||_{E^{k}}.\label{twocomestimate3}
\end{align}
For the fourth term in $\eqref{fQsh}$, by $\eqref{Bik}$ and Lemma $\ref{lemmaorthogonalparameter}$, we know
\begin{align}
&\ \ \ \ |\beta^{k}|\ ||\sum_{i=1}^{N}(B_{i}^{k})^{-1}Z^{(i)}Q^{(i)}\bm{u}^{k}||_{b}
\leq CH\sqrt{\lambda^{k}-\lambda_{1}^{h}}\cdot CH^{2}\sum_{i=1}^{N}||Z^{(i)}Q^{(i)}\bm{u}^{k}||_{b}\notag\\
&\leq CH^{3}\sqrt{\lambda^{k}-\lambda_{1}^{h}}||\bm{u}^{k}||_{b}\leq CH^{3}||\bm{e}_{2}^{k}||_{E^{k}}.\label{twocomestimate4}
\end{align}
Finally, we may finish the estimate of the almost counterbalanced term $I_{2}$ by combining Lemma $\ref{LemmaK0hQ2H}$, $\eqref{fQsh}$, $\eqref{Twocom}$ and $\eqref{twocomestimate1}\sim \eqref{twocomestimate4}$, i.e.,
\begin{equation}\label{Twocom1}
 ||Q_{2}^{h}(B_{0}^{k})^{-1}(\hat{\bm{v}}_{1}^{H}+\check{\bm{v}}_{1}^{k})||_{E^{k}}\leq CHf(Q_{1}^{h})+CH\sqrt{\lambda^{k}-\lambda_{1}^{h}}f(Q_{2}^{h})\leq CH^{2}||\bm{e}_{2}^{k}||_{E^{k}}.
\end{equation}
\subsection{The main result of this paper}
\par In this subsection, we first estimate the term $I_{3}$ in $\eqref{errorsplitting}$, and then we shall give our main result of this paper. For convenience, we denote $S^{k}:=Q_2^{h}(B^{k}_{0})^{-1}Q_{2}^{H}Z^{(0)}Q^{H}$. So \[Q_2^{h}(B^{k}_{0})^{-1}Q_{2}^{H}Z^{(0)}Q^{H}(A^{h}-\lambda^{k})\bm{u}_{1}^{k}=S^{k}(A^{h}-\lambda^{k})\bm{u}_{1}^{k}.\] For any $\bm{v}_{1}^{h}\in M_{h}(\lambda_{1})$, we have
\begin{align}
||S^{k}\bm{v}_{1}^{h}||^{2}_{E^{k}}
&=b((B^{k}_{0})^{-1}Q_{2}^{H}Z^{(0)}Q^{H}\bm{v}_{1}^{h},(A^{h}-\lambda^{k})S^{k}\bm{v}_{1}^{h})\notag\\
&=b(Q_{2}^{H}\bm{v}_{1}^{h},T_{0}^{k}S^{k}\bm{v}_{1}^{h})
\leq ||Q_{2}^{H}\bm{v}_{1}^{h}||_{b}||T_{0}^{k}S^{k}\bm{v}_{1}^{h}||_{b}\label{otherterms1}.
\end{align}
Combining Lemma $\ref{LemmaK0hQ2H}$, $\eqref{Gkcoarse}$ and $S^{k}\bm{v}_{1}^{h}\in M^{\perp}_{h}(\lambda_{1})$, we obtain
\begin{equation}\notag
||S^{k}\bm{v}_{1}^{h}||^{2}_{E^{k}}\leq CH||\bm{v}_{1}^{h}||_{b}||S^{k}\bm{v}_{1}^{h}||_{E^{k}}.
\end{equation}
Hence,
\begin{equation}\notag
||S^{k}\bm{v}_{1}^{h}||^{2}_{E^{k}}\leq CH^{2}||\bm{v}_{1}^{h}||_{b}^{2}.
\end{equation}
Moreover, we may obtain the following estimate
\begin{align*}
||S^{k}(A^{h}-\lambda^{k})\bm{u}_{1}^{k}||_{E^{k}}^{2}
=(\lambda^{k}-\lambda_{1}^{h})^2||S^{k}\bm{u}_{1}^{k}||_{E^{k}}^{2}
\leq CH^{2}(\lambda^{k}-\lambda_{1}^{h})^2||\bm{u}_{1}^{k}||_{b}^{2}\leq CH^{4}||\bm{e}_{2}^{k}||^{2}_{E^{k}},
\end{align*}
that is
\begin{equation}\label{othertermestimate1}
||Q_2^{h}(B^{k}_{0})^{-1}Q_{2}^{H}Z^{(0)}Q^{H}(A^{h}-\lambda^{k})\bm{u}_{1}^{k}||_{E^{k}}\leq CH^{2}||\bm{e}_{2}^{k}||_{E^{k}}.
\end{equation}

\par For the term $\beta^{k}Q_2^{h}(B^{k}_{0})^{-1}Q_{2}^{H}Z^{(0)}Q^{H}\bm{u}^{k}$ in $I_{3}$, by the similar argument with $\eqref{otherterms1}$, it is easy to see that
\[||Q_2^{h}(B^{k}_{0})^{-1}Q_{2}^{H}Z^{(0)}Q^{H}\bm{u}^{k}||^{2}_{E^{k}}\leq C||Q_{2}^{H}\bm{u}^{k}||^{2}_{b}.\] Hence,
 by Lemma $\ref{LemmaK0hQ2H}$, Lemma $\ref{lemmaorthogonalparameter}$ and $\eqref{e2Eku1b}$, we know
\begin{align}
&\ \ \ \ |\beta^{k}|||Q_2^{h}(B^{k}_{0})^{-1}Q_{2}^{H}Z^{(0)}Q^{H}\bm{u}^{k}||_{E^{k}}\notag\\
&\leq C|\beta^{k}|||Q_{2}^{H}\bm{u}^{k}||_{b}\leq C|\beta^{k}|\{||Q_{2}^{H}\bm{u}_{1}^{k}||_{b}+||Q_{2}^{H}\bm{e}_{2}^{k}||_{b}\}\notag\\
&\leq C|\beta^{k}|\{H||\bm{u}_{1}^{k}||_{b}+||\bm{e}_{2}^{k}||_{b}\}\leq CH\sqrt{\lambda^{k}-\lambda_{1}^{h}}(1+\frac{H}{\sqrt{\lambda^{k}-\lambda_{1}^{h}}})||\bm{e}_{2}^{k}||_{E^{k}}\notag\\
&\leq CH^{2}||\bm{e}_{2}^{k}||_{E^{k}}.\label{othertermestimate2}
\end{align}
\par For the terms $Q_2^{h}\sum_{i=1}^{N}(B_{i}^{k})^{-1}Z^{(i)}Q^{(i)}(A^{h}-\lambda^{k})\bm{u}_{1}^{k}$ and $\beta^{k}Q_2^{h}\sum_{i=1}^{N}(B_{i}^{k})^{-1}Z^{(i)}Q^{(i)}\bm{u}^{k}$ in $I_{3}$, we denote $F^{k}:=\sum_{i=1}^{N}(B_{i}^{k})^{-1}Z^{(i)}Q^{(i)}$. For any $\bm{v}^{h}\in E^{0}_{h}(\Omega_{h};\epsilon_{r})$, combining the Poincar$\acute{e}$ inequality and $\eqref{Gkifinelocal}$, we have
\begin{align*}
&\ \ \ \ ||Q_2^{h}F^{k}\bm{v}^{h}||^{2}_{E^{k}}
=\sum_{l=1}^{N}b((B_{l}^{k})^{-1}Z^{(l)}Q^{(l)}\bm{v}^{h},(A^{h}-\lambda^{k})Q_2^{h}F^{k}\bm{v}^{h})\\
&=\sum_{l=1}^{N}b(Q^{(l)}\bm{v}^{h},T_{l}^{k}Q_2^{h}F^{k}\bm{v}^{h})\leq(\sum_{l=1}^{N}||Q^{(l)}\bm{v}^{h}||^{2}_{b,\Omega_{l}^{'}})^{\frac{1}{2}}(\sum_{l=1}^{N}
||T_{l}^{k}Q_2^{h}F^{k}\bm{v}^{h}||^{2}_{b})^{\frac{1}{2}}\\
&\leq C\sqrt{N_{0}}H||\bm{v}^{h}||_{b}(\sum_{l=1}^{N}
||T_{l}^{k}Q_2^{h}F^{k}\bm{v}^{h}||^{2}_{E^{k}})^{\frac{1}{2}}\leq CH||\bm{v}^{h}||_{b}||Q_2^{h}F^{k}\bm{v}^{h}||_{E^{k}}.
\end{align*}
Then
\begin{equation}\label{otherterm3}
||Q_2^{h}\sum_{i=1}^{N}(B_{i}^{k})^{-1}Z^{(i)}Q^{(i)}\bm{v}^{h}||^{2}_{E^{k}}\leq CH^{2}||\bm{v}^{h}||^{2}_{b}\ \ \ \forall \ \bm{v}^{h}\in E^{0}_{h}(\Omega_{h};\epsilon_{r}).
\end{equation}
Specially, taking $\bm{v}^{h}=\bm{u}_{1}^{k}$, we obtain
\begin{align}
&\ \ \ \ ||Q_2^{h}\sum_{i=1}^{N}(B_{i}^{k})^{-1}Z^{(i)}Q^{(i)}(A^{h}-\lambda^{k})\bm{u}_{1}^{k}||^{2}_{E^{k}}\notag\\
&\leq(\lambda^{k}-\lambda_{1}^{h})^{2}||Q_2^{h}\sum_{i=1}^{N}(B_{i}^{k})^{-1}Z^{(i)}Q^{(i)}\bm{u}_{1}^{k}||^{2}_{E^{k}}\notag\\
&\leq CH^{2}(\lambda^{k}-\lambda_{1}^{h})^{2}||\bm{u}_{1}^{k}||^{2}_{b}\leq CH^{4}||\bm{e}_{2}^{k}||^{2}_{E^{k}}.\label{othertermestimate3}
\end{align}
 and similarly taking $\bm{v}^{h}=\bm{u}_{1}^{k},\bm{v}^{h}=\bm{e}_{2}^{k}$, we then have
\begin{align}
&\ \ \ \ |\beta^{k}|||Q_2^{h}\sum_{i=1}^{N}(B_{i}^{k})^{-1}Z^{(i)}Q^{(i)}\bm{u}^{k}||_{E^{k}}\notag\\
&\leq |\beta^{k}|\{||Q_2^{h}\sum_{i=1}^{N}(B_{i}^{k})^{-1}Z^{(i)}Q^{(i)}\bm{u}_{1}^{k}||_{E^{k}}+
||Q_2^{h}\sum_{i=1}^{N}(B_{i}^{k})^{-1}Z^{(i)}Q^{(i)}\bm{e}_{2}^{k}||_{E^{k}}\}\notag\\
&\leq |\beta^{k}|\{CH||\bm{u}_{1}^{k}||_{b}+CH||\bm{e}_{2}^{k}||_{b}\}
\leq CH^{2}\sqrt{\lambda^{k}-\lambda_{1}^{h}}(1+\frac{1}{\sqrt{\lambda^{k}-\lambda_{1}^{h}}})||\bm{e}_{2}^{k}||_{E^{k}}\notag\\
&\leq CH^{2}||\bm{e}_{2}^{k}||_{E^{k}}.\label{othertermestimate4}
\end{align}
 \par Combining Theorem $\ref{Gkv2theorem}$, Lemma $\ref{lemmaorthogonalparameter}$,  $\eqref{Twocom1}$, $\eqref{othertermestimate1}$, $\eqref{othertermestimate2}$, $\eqref{othertermestimate3}$ and $\eqref{othertermestimate4}$, we may obtain the main convergence result of this paper.

\begin{theorem}
For the Algorithm 2, the discrete principal eigenvalue of the Maxwell eigenvalue problem satisfies
\begin{equation}\label{conver1}
||\bm{e}_{2}^{k+1}||_{E}\leq c(H)(1-C\frac{\delta^{2}}{H^{2}})||\bm{e}_{2}^{k}||_{E},
\end{equation}
and
\begin{equation}\label{conver2}
\lambda^{k+1}-\lambda_{1}^{h}\leq c(H)(1-C\frac{\delta^{2}}{H^{2}})^{2}(\lambda^{k}-\lambda_{1}^{h})
\end{equation}
where $c(H)\to 1$, as $H\to 0$ and $C$ is independent of $h,\ H,\ \delta$.
\end{theorem}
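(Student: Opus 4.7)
The plan is to assemble the three estimates already proved for the components of the splitting $\eqref{errorsplitting}$ of $\widetilde{\bm{e}}_2^{k+1}$, translate the resulting bound from the iteration-dependent energy $||\cdot||_{E^k}$ to the fixed energy $||\cdot||_{E}$ via Lemma $\ref{normequivalence}$, and finally descend from the auxiliary iterate $\check{\bm{u}}^{k+1}$ to the true Rayleigh-quotient minimizer $\bm{u}^{k+1}$ using minimality.

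The first step is to apply the triangle inequality in $||\cdot||_{E^k}$ to $\widetilde{\bm{e}}_2^{k+1}=I_1+I_2+I_3$. Theorem $\ref{Gkv2theorem}$ controls $I_1=G^k\bm{e}_2^k$ by $(1-C\delta^2/H^2)||\bm{e}_2^k||_{E^k}$; estimate $\eqref{Twocom1}$ controls $I_2$ by $CH^2||\bm{e}_2^k||_{E^k}$; and the four inequalities $\eqref{othertermestimate1}$--$\eqref{othertermestimate4}$ together bound $I_3$ by $CH^2||\bm{e}_2^k||_{E^k}$. Adding them yields
\begin{equation*}
||\widetilde{\bm{e}}_2^{k+1}||_{E^k}\le\bigl[(1-C\delta^2/H^2)+CH^2\bigr]\,||\bm{e}_2^k||_{E^k}.
\end{equation*}
Invoking Lemma $\ref{normequivalence}$ together with $\eqref{lowerboundestimate}$--$\eqref{upperboundestimate}$ (which force $\lambda^{k}\to\lambda_{1}$ as $H\to 0$), the equivalence constants between $||\cdot||_{E^k}$ and $||\cdot||_{E}$ tend to $1$, and the residual $CH^2$ can be absorbed into a single prefactor $c(H)\to 1$, producing
\begin{equation*}
||\widetilde{\bm{e}}_2^{k+1}||_{E}\le c(H)(1-C\delta^2/H^2)\,||\bm{e}_2^k||_{E}.
\end{equation*}

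To replace $\widetilde{\bm{e}}_2^{k+1}$ by the actual $\bm{e}_2^{k+1}$, I would note that $\check{\bm{u}}^{k+1}=\widetilde{\bm{u}}^{k+1}/||\widetilde{\bm{u}}^{k+1}||_b$ is a unit-$b$-norm candidate in $W^{k+1}$, so minimality of $\bm{u}^{k+1}$ gives $\lambda^{k+1}=Rq(\bm{u}^{k+1})\le Rq(\check{\bm{u}}^{k+1})=:\check{\lambda}^{k+1}$. Because the Helmholtz projection in step 3 of Algorithm 2 forces $W^{k+1}\subset E_h^{0}(\Omega_h;\epsilon_r)$, the manipulation that produced $\eqref{e2Eku1b}$ yields, for any unit-$b$-norm $\bm{u}\in E_h^{0}(\Omega_h;\epsilon_r)$, the identity
\begin{equation*}
Rq(\bm{u})-\lambda_{1}^{h}=||Q_{2}^{h}\bm{u}||_{E}^{2}+(\lambda_{1}-\lambda_{1}^{h})\,||Q_{2}^{h}\bm{u}||_{b}^{2}.
\end{equation*}
Applying this identity to both $\bm{u}^{k+1}$ and $\check{\bm{u}}^{k+1}$, combined with $||\widetilde{\bm{u}}^{k+1}||_{b}\ge 1-O(\alpha)$ (so $||\check{\bm{e}}_{2}^{k+1}||_{E}$ differs from $||\widetilde{\bm{e}}_{2}^{k+1}||_{E}$ only by a harmless factor close to $1$) and $|\lambda_{1}-\lambda_{1}^{h}|\le Ch^{2}$, yields $\eqref{conver1}$; squaring and reinvoking the identity produces $\eqref{conver2}$.

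The principal obstacle is the bookkeeping: several $O(H^{2})$ and $O(h^{2})$ corrections arise (from $I_{2}$ and $I_{3}$, from the $||\cdot||_{E^{k}}$-to-$||\cdot||_{E}$ distortion, from the normalization $||\widetilde{\bm{u}}^{k+1}||_{b}\ne 1$, and from $|\lambda_{1}-\lambda_{1}^{h}|$), and one must verify that they can all be absorbed into a single $c(H)\to 1$ prefactor without damaging the dominant $(1-C\delta^{2}/H^{2})$ contraction supplied by Theorem $\ref{Gkv2theorem}$. This succeeds in the natural domain-decomposition regime where $\delta/H$ stays bounded away from $0$, since all remaining correction terms vanish polynomially in $H$ or $h$.
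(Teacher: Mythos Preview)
Your approach is essentially that of the paper: assemble the bounds on $I_1$, $I_2$, $I_3$ in the $||\cdot||_{E^k}$-norm, convert to $||\cdot||_{E}$ via Lemma~\ref{normequivalence} and $\eqref{lowerboundestimate}$--$\eqref{upperboundestimate}$, pass from $\widetilde{\bm{u}}^{k+1}$ to the normalized $\check{\bm{u}}^{k+1}$, and then invoke minimality of the Rayleigh quotient in $W^{k+1}$ together with the identity $Rq(\bm{u})-\lambda_1^h=||Q_2^h\bm{u}||_{E^h}^2$ (equivalently, your displayed identity in the $||\cdot||_E$-norm).

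One point needs sharpening. Your normalization estimate $||\widetilde{\bm{u}}^{k+1}||_b\ge 1-O(\alpha)$ is too weak: $\alpha$ is a fixed positive parameter independent of $H$, so the resulting factor $(1-O(\alpha))^{-1}>1$ cannot be absorbed into a prefactor $c(H)\to 1$, and this is precisely what forces your closing caveat that $\delta/H$ stay bounded away from~$0$. The paper instead observes that
\[
b(\bm{u}^k,\bm{t}^{k+1})=b(\bm{u}^k,\bm{e}^{k+1})-b(\bm{u}^k,\nabla p_h^k)=0,
\]
the first term vanishing by the Jacobi--Davidson orthogonality in $\eqref{CorrectionEq1}$ and the second because $\bm{u}^k\in E_h^0(\Omega_h;\epsilon_r)$. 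Hence $||\widetilde{\bm{u}}^{k+1}||_b^2=||\bm{u}^k||_b^2+\alpha^2||\bm{t}^{k+1}||_b^2\ge 1$ exactly, giving $||\check{\bm{e}}_2^{k+1}||_E\le ||\widetilde{\bm{e}}_2^{k+1}||_E$ with no extraneous constant. With this correction your caveat on $\delta/H$ is unnecessary and the argument goes through as stated.
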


\begin{proof}
By $\eqref{lowerboundestimate}$, $\eqref{upperboundestimate}$ and Lemma $\ref{normequivalence}$, we have
\begin{align*}
&\ \ \ \ ||\widetilde{\bm{e}}_{2}^{k+1}||_{E}^{2}=||\widetilde{\bm{e}}_{2}^{k+1}||_{E^{k}}^{2}+(\lambda^{k}-\lambda_{1})||\widetilde{\bm{e}}_{2}^{k+1}||_{b}^{2}\\
&\leq \{(1-C\frac{\delta^{2}}{H^{2}})^{2}+CH^{2}\}||\bm{e}_{2}^{k}||_{E^{k}}^{2}\leq c(H)(1-C\frac{\delta^{2}}{H^{2}})^{2}||\bm{e}_{2}^{k}||_{E}^{2},
\end{align*}
where $c(H)\to 1$, as $H\to 0$. Combining the orthogonal property of Jacobi-Davidson method and Helmholtz orthogonal decomposition, we have
$b(\bm{u}^{k},\bm{t}^{k+1})=0$. Due to $\eqref{specialcase1}$ and $\eqref{specialcase2}$, we have
\begin{equation}\notag
||\widetilde{\bm{u}}^{k+1}||_{b}^{2}=||\bm{u}^{k}||_{b}^{2}+\alpha^{2}||\bm{t}^{k+1}||_{b}^{2}\geq 1.
\end{equation}
Then
\begin{equation}\notag
||\check{\bm{e}}_{2}^{k+1}||^{2}_{E}\leq ||\widetilde{\bm{e}}_{2}^{k+1}||^{2}_{E}\leq c(H)(1-C\frac{\delta^{2}}{H^{2}})^{2}||\bm{e}_{2}^{k}||_{E}^{2}.
\end{equation}
By Lemma $\ref{normequivalence}$, we may obtain
\begin{equation}\notag
||\check{\bm{e}}_{2}^{k+1}||^{2}_{E^{h}}\leq c(H)(1-C\frac{\delta^{2}}{H^{2}})^{2}||\bm{e}_{2}^{k}||_{E^{h}}^{2}.
\end{equation}
Moreover,
\begin{equation}\notag
\check{\lambda}^{k+1}-\lambda_{1}^{h}\leq c(H)(1-C\frac{\delta^{2}}{H^{2}})^{2}(\lambda^{k}-\lambda_{1}^{h}).
\end{equation}
We know that $\check{\bm{u}}^{k+1}$ is a special vector in $W^{k+1}$ but is not stable function when minimizes the Rayleigh quotient. Hence,
\begin{equation}\notag
\lambda^{k+1}-\lambda_{1}^{h}\leq \check{\lambda}^{k+1}-\lambda_{1}^{h} \leq c(H)(1-C\frac{\delta^{2}}{H^{2}})^{2}(\lambda^{k}-\lambda_{1}^{h}).
\end{equation}
Then
\begin{equation}\notag
||\bm{e}_{2}^{k+1}||_{E^{h}}\leq c(H)(1-C\frac{\delta^{2}}{H^{2}})||\bm{e}_{2}^{k}||_{E^{h}}.
\end{equation}
Moreover,
\begin{equation}\notag
||\bm{e}_{2}^{k+1}||_{E}\leq c(H)(1-C\frac{\delta^{2}}{H^{2}})||\bm{e}_{2}^{k}||_{E},
\end{equation}
which completes the proof of the theorem.\qed
\end{proof}

\section{Numerical experiments}
\par In this section, we shall present several numerical experiments in two and three dimensional cases to support our theoretical findings. We do the computation in double decision but only display four digits after the decimal in tables except $\lambda^{it.}$ in order to show the convergent process. In 2D cases, the stopping tolerance is endowed by $|\lambda^{k+1}-\lambda^{k}|<10^{-8}$. In 3D cases, the stopping tolerance is endowed by $||\bm{r}^{k}||_{b}<10^{-5}$.
\subsection{2D Maxwell eigenvalue problems}
\par In this subsection, we shall present some numerical results for the 2D Maxwell eigenvalue problem in rectangle domain, square domain and L-shaped domain.
 \begin{example}
  We consider the Maxwell eigenvalue problem $\eqref{MaxwellEigenvalueu}$ on the two dimensional domain $\Omega=(0,2\pi)\times (0,\pi)$ with $\epsilon_{r}=\mu_{r}=1$ and use the lowest triangle edge element to compute the principal eigenvalue which is $\lambda_{1}=0.25$ with algebraic multiplicity $m_{1}=1$. First, we choose an initial uniform partition $\tau_{H}$ in $\Omega$ with the number of subdomains $N=64$, coarse grid size $H=\frac{\sqrt{2}\pi}{2^2}$. we refine uniformly the grid layer by layer and fix the ratio $\frac{\delta}{H}=\frac{1}{4}$. Next, we test the optimality and scalability of our PHJD algorithm.
  \end{example}
\begin{table}[H]
 \centering
 \caption{The number of subdomains $N=64$, the ratio $\frac{\delta}{H}=\frac{1}{4}$}
\newcolumntype{d}{D{.}{.}{2}}
\begin{tabular}{|c|c|c|c|c|c|c|}
\hline
$h$&\multicolumn{1}{c|}{$d.o.f.$}&\multicolumn{1}{c|}{$it.$}&
\multicolumn{1}{c|}{$|\lambda^{k+1}-\lambda^{k}|$}&\multicolumn{1}{c|}{$||\bm{r}^{k}||_{b}$}&\multicolumn{1}{c|}{$\lambda^{it.}$}
&\multicolumn{1}{c|}{$Con.ord.$}\\
\hline
$\frac{\sqrt{2}\pi}{2^4}$&1488&8&8.8778e-10&1.1214e-04&0.249933057840449&-\\
\hline
$\frac{\sqrt{2}\pi}{2^5}$&6048&7&1.1871e-09&2.5439e-04&0.249983266630354&2.0002\\
\hline
$\frac{\sqrt{2}\pi}{2^{6}}$&24384&7&1.2434e-09&2.1708e-04&0.249995816760741&2.0000\\
\hline
$\frac{\sqrt{2}\pi}{2^{7}}$&97920&7&7.5604e-10&1.3077e-04&0.249998954192258&2.0000\\
\hline
$\frac{\sqrt{2}\pi}{2^{8}}$&392448&7&6.9670e-10&1.0839e-04&0.249999738755104&2.0011\\
\hline
$\frac{\sqrt{2}\pi}{2^{9}}$&1571328&7&1.3278e-10&8.8096e-04&0.249999934697985&2.0002\\
\hline
$\frac{\sqrt{2}\pi}{2^{10}}$&6288384&7 &1.4611e-10&9.4527e-04&0.249999983669919&1.9996\\
\hline
\end{tabular}
\end{table}

\begin{table}[H]
 \centering
 \caption{The number of subdomains $N=64, 256, 1024$}
\newcolumntype{d}{D{.}{.}{2}}
\begin{tabular}{|c|c|c|}
\hline
$N$&\multicolumn{1}{c|}{$d.o.f$}&\multicolumn{1}{c|}{$it.$}\\
\hline
64&6288384&7\\
\hline
256&6288384&6\\
\hline
1024&6288384&6\\
\hline
\end{tabular}
\end{table}
\par The notation $h$ means the fine grid size. $d.o.f.$ means the degree of freedom. $it.$ means iterative steps in our algorithm. $Con.ord.$ means the convergence order of the principal eigenvalue. It is shown in Table 1 that the convergence rate does not deteriorate when $h\to 0$. Meanwhile, the PHJD algorithm works very well when $h<<H^{4}$ which coincides with our theory. In order to verify the scalability of our PHJD method, we choose the number of domains $N=64, 256, 1024$ to count the iterative steps for $d.o.f=6288384 $.
The iterative steps decreasing in table 2 illustrates that our method based on domain decomposition is scalable, which coincides with our theory. Actually, the near optimality and scalability of our method hold not only for simple case but also for multiple case.
\begin{example}
 We consider the Maxwell eigenvalue problem $\eqref{MaxwellEigenvalueu}$ in $(0,\pi)^{2}$ with $\epsilon_{r}=1, \mu_{r}=1$ and use the lowest triangle edge element to compute the principal eigenvalue which is $\lambda_{1}=1$ with algebraic multiplicity $m_{1}=2$. First, we choose an initial uniform partition $\tau_{H}$ in $\Omega$ with the number of subdomains $N=8$, coarse grid size $H=\frac{\sqrt{2}\pi}{2}$. we refine uniformly the grid layer by layer and fix the ratio $\frac{\delta}{H}=\frac{1}{8}$. Next, we also test the optimality and scalability of our PHJD algorithm.
\end{example}
\begin{table}[H]
 \centering
 \caption{The number of subdomains $N=8$, the ratio $\frac{\delta}{H}=\frac{1}{8}$}
\newcolumntype{d}{D{.}{.}{2}}
\begin{tabular}{|c|c|c|c|c|c|c|}
\hline
$h$&\multicolumn{1}{c|}{$d.o.f.$}&\multicolumn{1}{c|}{$it.$}&
\multicolumn{1}{c|}{$|\lambda^{k+1}-\lambda^{k}|$}&\multicolumn{1}{c|}{$||\bm{r}^{k}||_{b}$}&\multicolumn{1}{c|}{$\lambda^{it.}$}
&\multicolumn{1}{c|}{$Con.ord.$}\\
\hline
$\frac{\sqrt{2}\pi}{2^4}$&736&7&2.1972e-09& 2.0136e-04&0.998065902158390&-\\
\hline
$\frac{\sqrt{2}\pi}{2^5}$&3008&7&3.2392e-09&1.8341e-04&0.999515561847595&1.9973\\
\hline
$\frac{\sqrt{2}\pi}{2^6}$&12160&7& 1.0886e-09&3.0479e-04&0.999878833291020& 1.9993\\
\hline
$\frac{\sqrt{2}\pi}{2^{7}}$&48896&7&1.5651e-09&1.8827e-04&0.999969704716447& 1.9998 \\
\hline
$\frac{\sqrt{2}\pi}{2^{8}}$&196096&7& 1.1096e-09&8.5103e-05&0.999992425964469&2.0000\\
\hline
$\frac{\sqrt{2}\pi}{2^{9}}$&785408&7&1.6332e-09&2.1306e-04&0.999998106489943& 2.0000\\
\hline
$\frac{\sqrt{2}\pi}{2^{10}}$&3143680&7&1.2116e-10&6.8551e-04&0.999999526630577&  2.0000\\
\hline
$\frac{\sqrt{2}\pi}{2^{11}}$&12578816&7&1.0379e-10&9.3310e-04&0.999999881662115&2.0001\\
\hline
\end{tabular}
\end{table}
\begin{table}[H]
 \centering
 \caption{The number of subdomains $N=8,32,128,512$}
\newcolumntype{d}{D{.}{.}{2}}
\begin{tabular}{|c|c|c|}
\hline
$N$&\multicolumn{1}{c|}{$d.o.f.$}&\multicolumn{1}{c|}{$it.$}\\
\hline
8&12578816&7\\
\hline
32&12578816&6\\
\hline
128&12578816&6\\
\hline
512&12578816&6\\
\hline
\end{tabular}
\end{table}
\par Although we only give the theoretical analysis for the first simple eigenvalue case, our algorithm still works well and keeps good scalability for the first multiple eigenvalue case. It is shown in Table 3 that the convergence rate does not deteriorate when $h\to 0$. Meanwhile, our PHJD algorithm works very well when $h<<H^{4}$. Similarly, we choose the number of domains $N=8, 32, 128, 512$ to count the iterative steps for $d.o.f.=12578816$. Table 4 shows that the iterative steps keep stable when the number of domains $N$ increases, which illustrates that our PHJD method is scalable for the multiple principal eigenvalue case.
\begin{example}
We consider the Maxwell eigenvalue problem in  $(-1,1)^{2}\backslash[0,1)\times(-1,0]$ with $\epsilon_{r}=\mu_{r}=1$ and use the lowest triangle edge element to compute the principal eigenvalue which is $1.47562182$ with algebraic multiplicity $m_{1}=1$ (see \cite{Buffa2007Discontinuous,chenlong}). First, we choose an initial uniform partition $\tau_{H}$ in $\Omega$ with the number of subdomains $N=96$, coarse grid size $H=\frac{\sqrt{2}}{2^{2}}$. We refine uniformly the grid layer by layer and fix the ratio $\frac{\delta}{H}=\frac{1}{8}$. Next, we test the optimality and scalability of our PHJD algorithm.
\begin{table}[H]
 \centering
 \caption{The number of subdomains $N=96$, the ratio $\frac{\delta}{H}=\frac{1}{8}$}
\newcolumntype{d}{D{.}{.}{2}}
\begin{tabular}{|c|c|c|c|c|c|}
\hline
$h$&\multicolumn{1}{c|}{$d.o.f.$}&\multicolumn{1}{c|}{$it.$}&
\multicolumn{1}{c|}{$|\lambda^{k+1}-\lambda^{k}|$}&\multicolumn{1}{c|}{$\lambda^{it.}$}
&\multicolumn{1}{c|}{$Con.ord.$}\\
\hline
$\frac{\sqrt{2}}{2^5}$&9088&8&2.4067e-09&1.472164089752624 &-\\
\hline
$\frac{\sqrt{2}}{2^6}$&36608&7& 9.7419e-09&1.474258883109039&1.3431\\
\hline
$\frac{\sqrt{2}}{2^{7}}$&146944&7&2.8876e-09&1.475083316185965&1.3397 \\
\hline
$\frac{\sqrt{2}}{2^{8}}$&588800&7& 4.1055e-10 &1.475408720712429&1.3374\\
\hline
$\frac{\sqrt{2}}{2^{9}}$&2357248&7&8.3497e-11&1.475537406431712&1.3360\\
\hline
$\frac{\sqrt{2}}{2^{10}}$&9433088&7&2.8653e-11&1.475588361248540&1.3351\\
\hline
\end{tabular}
\end{table}
\begin{table}[H]
 \centering
 \caption{The number of subdomains $N=96,384,1536$}
\newcolumntype{d}{D{.}{.}{2}}
\begin{tabular}{|c|c|c|}
\hline
$N$&\multicolumn{1}{c|}{$d.o.f.$}&\multicolumn{1}{c|}{$it.$}\\
\hline
96&9433088&7\\
\hline
384&9433088&7\\
\hline
1536&9433088&7\\
\hline
\end{tabular}
\end{table}
\end{example}
\par We note that the first Maxwell eigenvector in L-shaped domain has a strong unbounded singularity at the re-entrant corner but our PHJD method still works very well. It is shown in Table 5 that the iterative steps $it.$ keep stable when $ h\to 0 $, which illustrates that the convergence rate of our algorithm is independent of the fine grid size $h$. The fact that iterative steps keep stable with the number of subdomains increasing in Table 6 illustrates our algorithm is scalable. Although we only give the theoretical analysis for convex domain, our PHJD method works well and may be extended to more general Lipschitz domain.
\subsection{3D Maxwell eigenvalue problems}
\par In this subsection, we shall present some numerical results for the 3D Maxwell eigenvalue problem in cuboid domain and cube domain.
\begin{example}
 We consider the Maxwell eigenvalue problem in  $(0,\pi)\times (0,2\pi)\times (0,1.2\pi)$ with $\epsilon_{r}=\mu_{r}=1$ and use the lowest cuboid edge element to compute the principal eigenvalue which is $\frac{17}{18}=0.9\dot{4}$ with algebraic multiplicity $m_{1}=1$. First, we choose an initial uniform partition $\tau_{H}$ in $\Omega$ with the number of subdomains $N=128$, coarse grid size $H=\frac{1.2\pi}{2^2}$. We refine uniformly the grid layer by layer and fix the ratio $\frac{\delta}{H}=\frac{1}{4}$. Next, we test the optimality and scalability of our PHJD algorithm.
 \end{example}
\begin{table}[H]
 \centering
 \caption{The number of subdomains $N=128$, the ratio $\frac{\delta}{H}=\frac{1}{4}$}
\newcolumntype{d}{D{.}{.}{2}}
\begin{tabular}{|c|c|c|c|c|c|}
\hline
$h$&\multicolumn{1}{c|}{$d.o.f$}&\multicolumn{1}{c|}{$it.$}&
\multicolumn{1}{c|}{$||\bm{r}^{k}||_{b}$}
&\multicolumn{1}{c|}{$\lambda^{it.}$}&\multicolumn{1}{c|}{$Con.ord.$}\\
\hline
$\frac{1.2\pi}{2^{4}}$&22080&9& 5.4339e-06&0.946879258942834&-\\
\hline
$\frac{1.2\pi}{2^{5}}$&186496&8&3.1060e-06&0.945052693133526& 2.0011\\
\hline
$\frac{1.2\pi}{2^{6}}$&1532160&8&7.6311e-06&0.944598398293954&1.9822\\
\hline
\end{tabular}
\end{table}
\begin{table}[H]
 \centering
 \caption{The number of subdomains $N=128, 1024$}
\newcolumntype{d}{D{.}{.}{2}}
\begin{tabular}{|c|c|c|}
\hline
$N$&\multicolumn{1}{c|}{$d.o.f.$}&\multicolumn{1}{c|}{$it.$}\\
\hline
128&1532160&8\\
\hline
1024&1532160&7\\
\hline
\end{tabular}
\end{table}
\par It is shown in Table 7 that the iterative steps keep stable when $ h\to 0 $, which illustrates that the convergence rate of our algorithm is independent of the fine grid size $h$. The fact that iterative steps decreases in Table 8 illustrates our algorithm is scalable. These coincide with our theory in this paper. Actually, the near optimality and scalability of our method hold not only for simple case but also for multiple case in 3D case.
\begin{example}
We consider the Maxwell eigenvalue problem in  $(0,\pi)^{3}$ with $\epsilon_{r}=\mu_{r}=1$ and use the lowest cuboid edge element to compute the principal eigenvalue which is $\lambda_{1}=2$ with algebraic multiplicity $m_{1}=3$. First, we choose an initial uniform partition $\tau_{H}$ in $\Omega$ with the number of subdomains $N=64$, coarse grid size $H=\frac{\pi}{2^2}$. We refine uniformly the grid layer by layer and fix the ratio $\frac{\delta}{H}=\frac{1}{4}$. Next, we also test the optimality and scalability of our PHJD algorithm.
\end{example}
\begin{table}[H]
 \centering
 \caption{The number of subdomains $N=64$, the ratio $\frac{\delta}{H}=\frac{1}{4}$}
\newcolumntype{d}{D{.}{.}{2}}
\begin{tabular}{|c|c|c|c|c|c|}
\hline
$h$&\multicolumn{1}{c|}{$d.o.f$}&\multicolumn{1}{c|}{$it.$}&
\multicolumn{1}{c|}{$||\bm{r}^{k}||_{b}$}
&\multicolumn{1}{c|}{$\lambda^{it.}$}&\multicolumn{1}{c|}{$Con.ord.$}\\
\hline
$\frac{\pi}{2^{4}}$&10800&9&7.6503e-06&2.006433745425857&-\\
\hline
$\frac{\pi}{2^{5}}$&92256&8&4.5546e-06&2.001607079135022&2.0012\\
\hline
$\frac{\pi}{2^{6}}$&762048&7&9.8823e-06&2.000419881720004&1.9364\\
\hline
$\frac{\pi}{2^{7}}$&6193536&7&4.1799e-06&2.000105199582128&1.9969\\
\hline
\end{tabular}
\end{table}
\begin{table}[H]
 \centering
 \caption{The number of subdomains $N=64, 512$}
\newcolumntype{d}{D{.}{.}{2}}
\begin{tabular}{|c|c|c|}
\hline
$N$&\multicolumn{1}{c|}{$d.o.f.$}&\multicolumn{1}{c|}{$it.$}\\
\hline
64&6193536&7\\
\hline
512&6193536&6\\
\hline
\end{tabular}
\end{table}
\par Although we only give the theoretical analysis for the first simple eigenvalue case, our algorithm still works well and keeps good scalability for the first multiple eigenvalue case. The iterative steps keep stable in Table $9$ when $h\to 0$, which illustrates that the convergence rate is independent of $h$. It is shown in Table 10 that the iterative steps decrease, which illustrates that the PHJD method is scalable.
\section{Conclusions}
\par In this paper, based on the domain decomposition method, we propose a new and robust two-level PHJD method for solving the Maxwell eigenvalue problem. The two-level PHJD method has a good scalability and is asymptotically optimal without any assumption between coarse size $H$ and fine size $h$. Numerical results confirm our theoretical findings.
\begin{small}
\bibliographystyle{plain}
\bibliography{reference}
\end{small}
\end{document}